\newtheorem*{acknowledgement}{Acknowledgements}
\newtheorem{theorem}{Theorem}[section]
\newtheorem{lemma}[theorem]{Lemma}
\newtheorem{proposition}[theorem]{Proposition}
\newtheorem{remark}[theorem]{Remark}
\newtheorem{definition}[theorem]{Definition}
\newtheorem{corollary}[theorem]{Corollary}
\newtheorem{conjecture}[theorem]{Conjecture}
\newcommand{\al}{\alpha}
\newcommand{\be}{\beta}
\newcommand{\ga}{\gamma}
\newcommand{\de}{\delta}
\newcommand{\e}{\varepsilon}
\newcommand{\si}{\sigma}
\newcommand{\om}{\omega}
\newcommand{\cf}{\mathcal F}
\newcommand{\cy}{\mathcal Y}
\newcommand{\ZR}{\mathbb{R}}
\newcommand{\ZZ}{\mathbb{Z}}
\newcommand{\ZS}{\mathbb{S}}
\newcommand{\ZT}{\mathbb{T}}
\begin{document}

\title[Weighted restriction and Falconer's problem]{Weighted restriction estimates and application to Falconer distance set problem}

\author[X. Du]{Xiumin Du}
\address{
Institute for Advanced Study\\
Princeton, NJ}
\email{xdu@math.ias.edu}

\author[L. Guth]{Larry Guth}
\address{
Massachusetts Institute of Technology\\
Cambridge, MA}
\email{lguth@math.mit.edu}

\author[Y. Ou]{Yumeng Ou}
\address{
Massachusetts Institute of Technology\\
Cambridge, MA}
\email{yumengou@mit.edu} 

\author[H. Wang]{Hong Wang}
\address{
Massachusetts Institute of Technology\\
Cambridge, MA}
\email{hongwang@mit.edu} 

\author[B. Wilson]{Bobby Wilson}
\address{
Massachusetts Institute of Technology\\
Cambridge, MA}
\email{blwilson@mit.edu}

\author[R. Zhang]{Ruixiang Zhang}
\address{
Institute for Advanced Study\\
Princeton, NJ}
\email{rzhang@math.ias.edu}

\begin{abstract}
We prove some weighted Fourier restriction estimates using polynomial partitioning and refined Strichartz estimates. As application we obtain improved spherical average decay rates of the Fourier transform of fractal measures, and therefore improve the results for the Falconer distance set conjecture in three and higher dimensions.
\end{abstract}
    
\maketitle

\section{Introduction} \label{sec:Intro}
\setcounter{equation}0

In this article we prove improved partial result for Falconer distance set conjecture in dimension three and higher. Let $E\subset\mathbb{R}^d$ be a compact subset, its distance set $\Delta(E)$ is defined by 
$$
\Delta(E):=\{|x-y|:x,y\in E\}\,.
$$
In \cite{F}, Falconer conjectured that:

\begin{conjecture} [Falconer]
Let $d\geq 2$ and $E\subset\mathbb{R}^d$ be a compact set. Then 
$$
{\rm dim}(E)> \frac d 2 \Rightarrow |\Delta(E)|>0.
$$
Here $|\cdot|$ denotes the Lebesgue measure and ${\rm dim}(\cdot)$ is the Hausdorff dimension.
\end{conjecture}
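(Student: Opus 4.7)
I would follow the Mattila Fourier-analytic reduction, fed by the weighted restriction estimates developed in this paper. First, fix $s$ with $d/2 < s < \dim(E)$ and invoke Frostman's lemma to produce a probability measure $\mu$ supported on $E$ with $\mu(B(x,r)) \leq C r^s$ for all $x \in \mathbb{R}^d$ and $r>0$. Let $\nu$ be the pushforward of $\mu \otimes \mu$ under $(x,y)\mapsto|x-y|$; this is a finite positive measure on $[0,\infty)$ with $\mathrm{supp}\,\nu \subset \Delta(E)$. Hence $\nu \in L^2(\mathbb{R})$ forces $|\Delta(E)| \geq |\mathrm{supp}\,\nu| > 0$, so the conjecture is reduced to an $L^2$ estimate on $\nu$.

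Expanding $\|\nu\|_{L^2}^2$ by Plancherel and passing to polar coordinates gives Mattila's identity
\begin{equation*}
\|\nu\|_{L^2}^2 \;\asymp\; \int_1^\infty R^{d-1}\, \sigma_\mu(R)\, dR, \qquad \sigma_\mu(R) := \int_{S^{d-1}} |\hat\mu(R\omega)|^2\, d\sigma(\omega).
\end{equation*}
The task therefore becomes proving, for any Frostman measure of exponent $s > d/2$, a spherical average decay $\sigma_\mu(R) \lesssim_\epsilon R^{-\beta+\epsilon}$ with some $\beta > d$. This is exactly where the weighted restriction machinery enters: writing $\hat\mu(R\omega)$ as a Fourier extension on the dilated sphere $R\,S^{d-1}$ paired against $\mu$, $L^2$-duality identifies $\sigma_\mu(R)$ with a weighted $L^2$ extension estimate for $S^{d-1}$ in which the Frostman growth of $\mu$ plays the role of the weight. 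The strategy is then to prove a weighted $L^2$ restriction inequality for the sphere sharp enough that $\beta > d$ throughout the range $s > d/2$, using polynomial partitioning combined with refined Strichartz estimates at fine scales.

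The main obstacle is precisely this endpoint decay as $s \searrow d/2$. Polynomial partitioning controls the broad part of the extension operator, where wave packets are mutually transverse; but on the narrow/cellular part one must iterate to a smaller scale and combine with a refined Strichartz bound that captures the genuine $L^2$ dispersion of the wave packets against the Frostman weight, rather than just their crude $L^p$ size. The delicate point is matching the Frostman exponent $s$ with the available weighted restriction exponent so that the gain $\beta(s)-d$ remains strictly positive all the way down to the threshold; even a small loss in the exponent bookkeeping at this junction destroys the critical $s = d/2$ regime. This endpoint-sharp matching between the Frostman condition and the restriction inequality is the technical heart of the argument, and is what the weighted restriction theorems of the present paper are engineered to deliver.
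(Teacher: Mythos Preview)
The statement you are attempting to prove is Falconer's \emph{conjecture}; it is open in every dimension, and the paper does not claim to prove it. The paper's results (Theorem~\ref{Falc}) establish $|\Delta(E)|>0$ only under the stronger hypotheses $\dim(E)>1.8$ when $d=3$ and $\dim(E)>\tfrac d2+\tfrac14+\tfrac{d+1}{4(2d+1)(d-1)}$ when $d\ge 4$. So your proposal is not a proof of a theorem in the paper; it is a proposed proof of an open problem, and it has a genuine gap.

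The gap is in the final paragraph, where you assert that the weighted restriction theorems here are ``engineered to deliver'' the endpoint $s\searrow d/2$. They are not. Via Mattila's scheme (Theorem~\ref{Mthm}, Proposition~\ref{wtRestoFalc}), what one needs is the spherical-average decay $\beta_d(s)\ge d-s$; plugging in the bounds of Theorem~\ref{AvrDec} or Theorem~\ref{AvrDec-wtRes} gives exactly the thresholds in Theorem~\ref{Falc}, all of which lie strictly above $d/2$. There is no slack hidden in the bookkeeping: the polynomial-partitioning/refined-Strichartz machinery of Sections~\ref{sec:wtRes-3}--\ref{sec:wtRes-d-al} simply does not produce decay as strong as $\beta_d(s)\ge d-s$ near $s=d/2$. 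Worse, for $d=2,3$ the Mattila route is provably incapable of reaching $d/2$ regardless of what restriction estimate you feed it: as the paper notes after Proposition~\ref{wtRestoFalc}, Knapp-type examples cap the best exponent this method can ever yield at $4/3$ and $5/3$ respectively. So your outline cannot succeed for $d=2,3$ even in principle, and for $d\ge 4$ it would require a weighted restriction estimate substantially stronger than anything proved here.

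A smaller technical slip: your version of Mattila's identity is missing a square. The correct integral is
\[
\int_1^\infty \sigma_\mu(R)^2\,R^{d-1}\,dR,\qquad \sigma_\mu(R)=\int_{S^{d-1}}|\hat\mu(R\omega)|^2\,d\sigma(\omega),
\]
and finiteness of this (not of $\int R^{d-1}\sigma_\mu(R)\,dR$) is what forces $|\Delta(E)|>0$; see the sketch of Theorem~\ref{Mthm}. Consequently the target decay is $\sigma_\mu(R)\lesssim R^{s-d}$ (equivalently $\beta_d(s)\ge d-s$), not $\sigma_\mu(R)\lesssim R^{-\beta}$ with $\beta>d$ as you wrote.
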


Being open in every dimension, Falconer's conjecture has been studied by several authors, see Falconer \cite{F}, Mattila \cite{M}, Bourgain \cite{B}, Wolff \cite{W99} and Erdo\~gan \cite{Erdg04, Erdg06, Erdg05}. The previously best known results are that ${\dim}(E)> \frac d 2 +\frac 13$ implies $|\Delta(E)|>0$, due to Wolff \cite{W99} in dimension two and Erdo\~gan \cite{Erdg05} in dimension three and higher. Our main result is the following improvement:

\begin{theorem} \label{Falc}
Let $d\geq 3$ and $E\subset\mathbb{R}^d$ be a compact set with 
$$
{\rm dim}(E)> \alpha,\quad \alpha:=\begin{cases} 1.8, & d=3,\\ \frac d2 +\frac 14 +\frac{d+1}{4(2d+1)(d-1)}, & d\geq 4.\end{cases}
$$
Then $|\Delta(E)|>0$.
\end{theorem}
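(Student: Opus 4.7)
The plan is to deduce Theorem~\ref{Falc} from Mattila's integral criterion, reducing it to an improved decay rate for the spherical average of the Fourier transform of a Frostman measure on $E$, which in turn will follow from a new weighted Fourier restriction estimate. By Frostman's lemma, any compact $E\subset\mathbb{R}^d$ with $\dim(E)>\alpha$ supports a probability measure $\mu$ with $\mu(B(x,r))\lesssim r^s$ for some $s>\alpha$. Setting
\[
\sigma_\mu(R) := \int_{S^{d-1}} |\wh\mu(R\omega)|^2\,d\omega,
\]
Mattila's criterion asserts that $|\De(E)|>0$ provided $\int_1^\infty \sigma_\mu(R)^2 R^{d-1}\,dR < \infty$, so it suffices to prove a pointwise bound $\sigma_\mu(R)\lesssim R^{-\gamma}$ for some $\gamma>d/2$. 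The numerical values of $\alpha$ stated in the theorem correspond precisely to the Frostman thresholds at which the best $\gamma$ we can produce crosses $d/2$.

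I would then recast $\sigma_\mu(R)$ as a weighted $L^2$ restriction estimate. A standard computation identifies $R^{d-1}\sigma_\mu(R)$ with the squared $L^2(\mu_R)$-norm of a Fourier extension from $S^{d-1}$, where $\mu_R$ is the rescaling of $\mu$ to the unit ball, so by duality the desired decay follows from a weighted Fourier extension inequality of the form
\[
\int_{B_R} |Ef|^2\,d\mu \;\lesssim\; R^{d-1-\gamma+\e}\,\|f\|_{L^2(S^{d-1})}^2,
\]
valid for every $s$-dimensional Frostman measure $\mu$ on $\mathbb{R}^d$ and every $f\in L^2(S^{d-1})$. Here $E$ denotes the Fourier extension operator, and after localization to a cap one may equivalently work with the truncated paraboloid. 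Such an estimate quantitatively improves on the trivial $L^2$ restriction bound precisely when one can exploit the fractal growth of $\mu$.

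The weighted restriction estimate is the heart of the argument, and its proof will combine two ingredients. First, polynomial partitioning in the style of Guth decomposes $B_R$ into cells on which $|Ef|^2$ is controlled by a small number of \emph{tangent} contributions (where $Ef$ concentrates near a low-degree algebraic variety) and \emph{transverse} contributions (where wave packets behave independently and induction on the radius applies). Second, refined Strichartz estimates quantify how a collection of broad wave packets interacts with the Frostman measure: whenever a tube meets a ball of radius $r$, one saves a factor of $r^s$ over the trivial count of intersecting tubes. Inserting the refined Strichartz bound into the polynomial partitioning recursion produces a gain in the spherical decay exponent proportional to $s - d/2$, and feeding this gain back into Mattila's criterion yields the stated Falconer thresholds.

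The main obstacle will be closing the induction while tracking all the parameters. Polynomial partitioning introduces both a smaller physical scale and a new measure (essentially the restriction of $\mu$ to a neighborhood of an algebraic variety), and one has to verify that the Frostman hypothesis is inherited with only a mild loss in the constant, both in the cellular and in the tangential case. Simultaneously, the polynomial degree, the wave-packet scale, and the weight exponent must be balanced so that savings from the refined Strichartz estimate are not consumed by the inductive overhead. The precise numerical thresholds $\alpha = 1.8$ in $d=3$ and $\alpha = d/2 + \tfrac14 + \tfrac{d+1}{4(2d+1)(d-1)}$ in higher dimensions emerge from this optimization; replacing any one component with a suboptimal ingredient would immediately weaken the Falconer exponent.
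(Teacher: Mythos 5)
Your overall architecture --- Frostman measure, Mattila's criterion, and a weighted restriction estimate proved by polynomial partitioning plus refined Strichartz --- matches the paper's route (Proposition \ref{wtRestoFalc} reduces Theorem \ref{Falc} to Theorems \ref{wtRes-3} and \ref{wtRes-d}). But your reduction step has a genuine gap. You claim it suffices to prove a \emph{pointwise} bound $\sigma_\mu(R)\lesssim R^{-\gamma}$ with $\gamma>d/2$, and that the stated thresholds are where the best achievable $\gamma$ crosses $d/2$. Neither is how the argument can work: the decay rate actually obtainable at the stated threshold is only $\gamma=d-\alpha$, which is strictly less than $d/2$ in the relevant regime $\alpha>d/2$ (concretely, for $d=3$, $\alpha=1.8$ the paper proves $\beta_3(1.8)=1.2$, well below $3/2$). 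A pointwise bound with $\gamma=d-\alpha<d/2$ does not make $\int_1^\infty\sigma_\mu(R)^2R^{d-1}\,dR$ converge by itself. Mattila's lemma (Theorem \ref{Mthm}) closes only because one factor of $\sigma_\mu$ is controlled pointwise by $R^{\alpha-d}$ while the other is integrated against the finite $\alpha$-energy $I_\alpha(\mu)\sim\int\sigma_\mu(R)R^{\alpha-1}\,dR<\infty$. So the correct target is $\sigma_\mu(R)\lesssim R^{\alpha-d}$, i.e.\ $\beta_d(\alpha)\geq d-\alpha$, not $\gamma>d/2$; with your criterion the proof would not reach the claimed exponents.

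A second, related quantitative issue: you propose to prove a weighted $L^2$ extension estimate $\int_{B_R}|Ef|^2\,d\mu\lesssim R^{d-1-\gamma}\|f\|_2^2$. The weighted $L^2$ estimate that the machinery actually yields (Corollary \ref{cor:linL2} with $m=d$) only gives the threshold $\frac d2+\frac14+\frac{3}{8d+4}$ (Remark \ref{rmk:falc0}), which is strictly larger than the claimed $\frac d2+\frac14+\frac{d+1}{4(2d+1)(d-1)}$ for every $d\geq3$. To reach the stated exponents one must prove the weighted estimates at $p=\frac{2d}{d-1}$ (and $p=3$ when $d=3$), where the full polynomial-partitioning induction on dimension --- with the refined Strichartz input at every intermediate dimension, and the bilinear version in $d=3$ --- is what produces the gain; the trade-off between $p$ and $\gamma$ in condition \eqref{cri:alpha} of Proposition \ref{wtRestoFalc} is exactly where the numbers $1.8$ and $\frac d2+\frac14+\frac{d+1}{4(2d+1)(d-1)}$ come from. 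As written, your plan would at best recover the weaker thresholds of Remark \ref{rmk:falc0}, and with the $\gamma>d/2$ criterion it would not even recover those.
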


It is well known (see \cite{M, W99, Erdg05} for example) that Falconer's problem can be approached by weighted Fourier restriction (extension) estimates, which is the route we take in our proof. Consider the Fourier extension operator for the paraboloid 
$$
Ef(x):= \int_{B^{d-1}} e^{i(x'\cdot \om+x_d|\om|^2)} f(\om)\,d\om
$$
where $B^{d-1}$ denotes the unit ball in $\ZR^{d-1}$ and $x=(x',x_d)\in \ZR^d$.

Let $\cf_{\al,d}$ denote the collection of non-negative measurable functions $H:\ZR^d \rightarrow \ZR$ satisfying that 
\begin{equation}
\int_{B(x_0,r)} |H(x)| \,dx \leq r^\al, \quad \forall x_0\in\ZR^d,\quad \forall r\geq 1.
\end{equation}
\begin{remark}\label{unweighted}
Note that for $p\geq 1$, $H\in \cf_{\al,d}$, and functions $F$ with ${\rm supp } \widehat F \subset B^d$,
$$
\int |F|^p H\,dx \leq C_\al \int |F|^p \,dx\,.
$$We defer the justification of this observation to Subsection \ref{sec:justifyRmk}.
\end{remark}

We write $A\lessapprox B$ if $A\leq C_\e R^\e B$ for any $\e>0$, $R>1$ and let $B_R$ denote the ball of radius $R$. Theorem \ref{Falc} will be a consequence of the following weighted restriction estimates:

\begin{theorem} \label{wtRes-3}
Let $d=3$ and $\al \in (0,2]$. Then
\begin{equation} \label{eq:wtRes-3}
\|Ef\|_{L^3(B_R;Hdx)} \lessapprox \|f\|_{L^2}
\end{equation}
holds for all $f\in L^2(B^2)$, all $R> 1$ and all $H\in \cf_{\al,3}$.
\end{theorem}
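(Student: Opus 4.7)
The strategy is the polynomial partitioning method of Guth, combined with a refined Strichartz inequality in the spirit of Du--Guth--Li--Zhang, carrying the Frostman-type weight through the induction; this parallels the scheme used for Schr\"odinger maximal estimates. The argument inducts on the radius $R$, with the symbol $\lessapprox$ absorbing an $R^\epsilon$ loss per step. Applying Remark~\ref{unweighted} to handle unit-scale regularity, together with standard dyadic pigeonholing on $H$, on the dyadic level sets of $|Ef|$, and on wave packet amplitudes, one reduces to the model situation where $H=\mathbf{1}_Y$ with $Y\subset B_R$ a union of unit balls satisfying $|Y\cap B(x_0,r)|\lesssim r^\alpha$ for every $r\ge 1$, and $f=\sum_\theta f_\theta$ is a wave packet decomposition at cap-scale $R^{-1/2}$ with equalized amplitudes. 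Normalizing $\|f\|_{L^2}=1$, the target becomes
\[
\int_Y |Ef|^3\,dx \lessapprox 1.
\]

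\textbf{Partitioning step.} Apply polynomial partitioning at degree $D=R^\delta$ for a suitably small $\delta\ll\epsilon$ to the measure $|Ef|^3\mathbf{1}_Y\,dx$: either the bulk of the integral is captured by the $\sim D^3$ open cells $O_i$ cut out by the zero set $Z$ of a polynomial of degree $D$ (cellular case), or by the thin neighborhood $N_{R^{1/2}}(Z)$ (tangential case). In the cellular case, B\'ezout implies each wave packet tube $T_\theta$ meets at most $D+1$ cells, so if $f_i$ denotes the sum of packets entering $O_i$ then $\sum_i\|f_i\|_{L^2}^2\lesssim D$; the Frostman bound with exponent $\alpha$ descends to $Y\cap O_i$, each $O_i$ has diameter $R/D$, and the inductive hypothesis at radius $R/D$ closes the case. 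In the tangential case, tubes essentially tangent to $Z$ at scale $R^{1/2}$ can be packed efficiently (via Wongkew's theorem), and on each $R^{1/2}$-ball $B'\subset N_{R^{1/2}}(Z)$ the refined Strichartz inequality bounds $\|Ef\|_{L^3(Y\cap B')}$ in terms of the number of participating packets and $|Y\cap B'|^{1/3}\le R^{\alpha/6}$. Summing over $B'$ and using the Frostman condition at scale $R^{1/2}$ delivers the required bound.

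\textbf{Main obstacle.} The difficulty is the simultaneous closure of the induction in the cellular and tangential regimes so that the cumulative loss remains within $R^\epsilon$. The cellular case is relatively routine because $p=3$ lies strictly below the Stein--Tomas exponent $4$ in $\ZR^3$, but the tangential case must be calibrated against the $\alpha$-dimensional weight: the exponent in the refined Strichartz estimate, the Kakeya-type packing bound on tubes tangent to $Z$, and the Frostman volume estimate all need to balance at $p=3$, and this balance is what forces the restriction $\alpha\in(0,2]$. Setting up the wave packet pigeonholing so that the tangential refined Strichartz input is usable with the $\alpha$-Frostman weight (rather than only with the trivial Lebesgue weight) is the most delicate bookkeeping in the argument.
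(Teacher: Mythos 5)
Your overall architecture (polynomial partitioning at degree $R^{\delta_{\rm deg}}$ plus refined Strichartz, inducting on $R$ and carrying the weight along) matches the paper's Section \ref{sec:wtRes-3}, but two of your key steps do not work as stated. First, the cellular case: the cells $O_i$ produced by polynomial partitioning carry no diameter bound at all, so there is no ``inductive hypothesis at radius $R/D$'' to invoke. The paper closes this case by orthogonality ($\sum_i\|f_i\|_{L^2}^2\lesssim D\|f\|_{L^2}^2$, hence some $\|f_{i_0}\|_{L^2}^2\lesssim D^{-2}\|f\|_{L^2}^2$) and applying the estimate at the \emph{same} radius $R$ to $f_{i_0}$, gaining a factor $D^{3-p}$; this is $\ll 1$ only for $p>3$, which is why the paper proves the bound for every $p>3$ and recovers $p=3$ by H\"older only at the very end. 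Working directly at $p=3$, as you propose, the cellular gain degenerates to $D^0=1$ and the induction does not close; your appeal to the Stein--Tomas exponent is not the relevant mechanism here.

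Second, and more seriously, your treatment of the wall omits the bilinear structure that is the heart of the three-dimensional argument. The paper splits the wall contribution on each ball $B_j$ of radius $R^{1-\delta}$ into (i) a \emph{transverse} term, handled by induction on radius using the geometric fact that each tube is transverse to $Z(P)$ in at most $R^{O(\delta_{\rm deg})}$ of the balls $B_j$ (this piece is absent from your plan), and (ii) a \emph{bilinear tangential} term, produced by a broad--narrow decomposition into $1/K$-caps (Lemma \ref{L:wall}), with the narrow set $B_\epsilon$ controlled by parabolic rescaling (Lemma \ref{ParResc}) --- this is precisely where the constraint $\frac{\alpha+1}{p}-1-\gamma_3^0\le 0$ enters and why the range stops at $\alpha=2$ when $\gamma_3^0=0$. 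The tangential term is then estimated by the \emph{bilinear} refined Strichartz estimate (Corollary \ref{cor:bilLp} with $m=2$, $d=3$), whose weighted form gives the exponent $R^{(\alpha-2)/12}\le 1$ for $\alpha\le 2$. The linear refined Strichartz applied on $R^{1/2}$-balls and summed against the Frostman bound, as you describe, is not strong enough at $L^3$ in $\ZR^3$: it only yields the weaker $L^2$-based exponents of Corollary \ref{cor:linL2} and Remark \ref{rmk:falc0}. So the ``balance'' you say forces $\alpha\in(0,2]$ cannot be achieved without first setting up the bilinear reduction and the narrow-cap rescaling, both of which are missing from your proposal.
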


\begin{theorem} \label{wtRes-d}
Let $d\geq 4$ and $\al \in [\frac d 2,\frac{d+1}{2}]$. 
Then
\begin{equation}
\|Ef\|_{L^{2d/(d-1)}(B_R;Hdx)} \lessapprox R^{\frac{\al}{2d^2}-\frac{1}{4d}}\|f\|_{L^2}
\end{equation}
holds for all $f\in L^{2}(B^{d-1})$, all $R> 1$ and all $H\in\cf_{\al,d}$.
\end{theorem}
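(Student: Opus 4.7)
\medskip

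\noindent\textbf{Proof plan.}
The plan is to combine a wave packet decomposition of $Ef$ at scale $R$ with a refined Strichartz-type estimate at the exponent $p=2d/(d-1)$, and then to exploit the fractal condition defining $\cf_{\al,d}$ via dyadic pigeonholing over $R^{1/2}$-cubes. The overall scheme parallels the Du--Zhang approach to the Schr\"odinger maximal function, transported from the Stein--Tomas exponent to the target exponent $p$.

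I would first decompose $Ef\approx\suml_{T\in\mathbb T}f_T$ into wave packets normalised so that $\|f_T\|_{L^2}\approx 1$ and hence $\#\mathbb T\approx\|f\|_{L^2}^2$, where the tubes $T$ have dimensions $R^{1/2}\times\cdots\times R^{1/2}\times R$. Partition $B_R$ into a grid of $R^{1/2}$-cubes $\{Q\}$, and to each $Q$ associate its dyadic tube multiplicity $M(Q):=\#\{T:T\cap Q\neq\emptyset\}$ and dyadic weight density $\la(Q):=|Q|^{-1}\intl_Q H$. Since both parameters range over only $O(\log R)$ dyadic scales, at the cost of $R^\e$ one may restrict attention to a subfamily $Y=Y_{M,\la}$ of cubes on which $M$ and $\la$ are essentially constant.

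On such a subfamily I would invoke a refined Strichartz estimate at the exponent $p$, schematically of the form
\begin{equation*}
\|Ef\|_{L^p(Y)}\lessapprox R^{\frac{d-1}{4d}}\left(\frac{\#Y}{\#\{\text{cubes in }B_R\}}\right)^{1/p}M^{1/2-1/p}\|f\|_{L^2},
\end{equation*}
which can be obtained by combining Du--Zhang refined decoupling at the Stein--Tomas scale $2(d+1)/(d-1)$ with interpolation against the trivial $L^2(B_R)$ bound. Since $H\sim\la$ on $Y$ we have $\|Ef\|_{L^p(Y;Hdx)}\lesssim\la^{1/p}\|Ef\|_{L^p(Y)}$, and the fractal bound applied at scale $R$ gives $\la\cdot|Q|\cdot\#Y\lesssim R^\al$, hence $\#Y\lesssim R^{\al-d/2}/\la$. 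Substituting, the $\la$-dependence cancels; at $M$ near its extremal value the $M$-dependence combines with $\#\mathbb T\approx\|f\|_{L^2}^2$; and the residual powers of $R$ should combine to give the target exponent $R^{\al/(2d^2)-1/(4d)}$.

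The main obstacle is establishing a sharp refined Strichartz at the exponent $p=2d/(d-1)$. This exponent lies strictly below the Stein--Tomas exponent, so the estimate is inherently $B_R$-localised and the naive interpolation above is unlikely to be sharp uniformly across $\al\in[d/2,(d+1)/2]$. Following Guth's polynomial-partitioning paradigm, the sharp version is likely obtained by a broad--narrow induction on $R$: reduce to a $k$-broad version of the $L^p$-norm, control the broad part using multilinear restriction on polynomial cells, and handle the narrow part via $\ell^2$-decoupling together with induction on scales. Secondary subtleties include verifying compatibility with Erdo\~gan's Mattila-type bounds at the endpoint $\al=(d+1)/2$, and keeping all pigeonholing losses to $R^\e$ in the presence of the weight $H$.

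\medskip
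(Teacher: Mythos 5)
There is a genuine gap: the ``refined Strichartz estimate at the exponent $p=2d/(d-1)$'' that carries your whole argument is false as written. The density factor $\left(\#Y/\#\{\text{cubes in }B_R\}\right)^{1/p}$ cannot be present: take $f$ a single normalized wave packet ($\|f\|_{L^2}=1$, $M=1$) and $Y$ the $\approx R^{1/2}$ cubes along its tube; then $\|Ef\|_{L^p(Y)}\approx R^{\frac{d-1}{4d}}$, whereas your bound would give $R^{\frac{d-1}{4d}}\cdot R^{-\frac{(d-1)^2}{4d}}$, smaller by a large power of $R$. The interpolation you describe (refined decoupling at $q_d=\frac{2(d+1)}{d-1}$ against the trivial $L^2(B_R)$ bound) genuinely yields only $\|Ef\|_{L^p(Y)}\lessapprox R^{\frac{d-1}{4d}}M^{\frac{1}{2d}}\|f\|_{L^2}$ with no gain in $\#Y$, and feeding that into your pigeonholing gives $\la^{1/p}R^{\frac{d-1}{4d}}M^{\frac1{2d}}\|f\|_{L^2}$, which with $\la\leq R^{(\al-d)/2}$ and $M\leq R^{(d-1)/2}$ is $R^{\frac{(d-1)(\al-d+2)}{4d}}\|f\|_{L^2}$; at $d=4$, $\al=\frac{5}{2}$ this is $R^{3/32}$ against the target $R^{1/64}$, so the numerology does not close. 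Your closing sentence correctly senses that a polynomial-partitioning induction is needed, but that remark is a placeholder for essentially the entire content of the proof, not a secondary subtlety.

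For comparison, the paper does not prove any self-contained refined Strichartz at $p_d$. It introduces a weighted $2$-broad norm $BL^{p_d}_A(B_R;Hdx)$ and proves the broad estimate (Theorem \ref{wt2br}) by an induction over all intermediate dimensions $m$ (Proposition \ref{prop:wtRes-d}): at each stage a cellular/algebraic dichotomy is run, the transverse wave packets are handled by induction on $R$, and the tangential ones by interpolating the inductive hypothesis at dimension $m-1$ with a weighted $L^2$ estimate on the variety (Corollary \ref{cor:linL2}, which is where the linear refined Strichartz at exponent $q_m$ and the fractal condition on $H$ enter, at every dimension rather than through one global pigeonholing at scale $R$). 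The exponent $\frac{\al}{2d^2}-\frac{1}{4d}$ emerges from the resulting recursion for $\ga_m$ starting at $\ga_2=-\frac d8+\frac14$. The narrow part is then closed by parabolic rescaling and induction on $R$ (Lemma \ref{wtnr} together with Lemma \ref{ParResc}, the latter showing $\cf_{\al,d}$ is essentially preserved under rescaling), and it is the condition $\ga_d\geq\frac{1-d}{2}+\frac{\al+1}{p_d}$ there --- not compatibility with Erdo\~gan's bounds --- that forces the restriction $\al\leq\frac{d+1}{2}$. None of these mechanisms is supplied, or correctly identified, by your sketch.
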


Weighted restriction estimates in the vein of the above can be used to prove Falconer's problem via a famous scheme due to Mattila. Briefly speaking, Theorem \ref{wtRes-3}, \ref{wtRes-d} imply improved estimates for spherical average decay rates of the Fourier transform of fractal measures, from which improved results for the Falconer distance set conjecture follow. We leave the detailed discussion of Mattila's approach to Section \ref{sec:Pre}.

In fact, we obtain not only the aforementioned Fourier decay estimates for fractal measures immediately implied by Theorem \ref{wtRes-3}, \ref{wtRes-d}, but also ones corresponding to $\alpha$ in the whole range $(0,d]$. More precisely,
\begin{definition}
A compactly supported probability measure $\mu$ is called $\alpha$-dimensional if it satisfies
\begin{equation}
\mu(B(x,r))\leq C_\mu r^\al, \quad \forall r>0, \quad \forall x\in \ZR^d.
\end{equation}
\end{definition}

Let $\be_d(\al)$ denote the supremum of the numbers $\be$ for which 
\begin{equation} \label{eq:AvrDec}
\left\|\widehat \mu (R\cdot)\right\|_{L^2(\ZS^{d-1})}^2 \leq C_{\al,\mu} R^{-\be}
\end{equation}
whenever $R> 1$ and $\mu$ is $\al$-dimensional.

The problem of identifying the precise value of $\beta_d(\al)$ was proposed by Mattila \cite{M04}. 
In two dimensions, the sharp decay rates are known:
$$
\be_2(\al)=
\begin{cases}
\al, & \al\in(0,\,1/2], \quad \text{(Mattila \cite{M})}\\
1/2, & \al\in[1/2,\,1], \quad \text{(Mattila \cite{M})}\\
\al/2, & \al\in[1,\,2], \quad \text{(Wolff \cite{W99})}.
\end{cases}
$$
The problem remains open when $d\geq 3$ and $\al>\frac{d-1}{2}$.
See Luc\`a-Rogers \cite{LR} and the references therein for example for a discussion of various partial results.
In higher dimensions, the previously best known lower bounds are 
$$
\be_d(\al)\geq
\begin{cases}
\al, & \al\in(0,\,\frac{d-1}{2}], \quad \text{(Mattila \cite{M})}\\
\frac{d-1}{2}, & \al\in[\frac{d-1}{2},\,\frac d 2], \quad \text{(Mattila \cite{M})}\\
\al-1+\frac{d+2-2\al}{4}, & \al\in[\frac d2, \,\frac d2+\frac 23+\frac 1d], \quad \text{(Erdo\~gan \cite{Erdg05})} \\
\al-1+\frac{(d-\al)^2}{(d-1)(2d-\al-1)}, & \al\in[\frac d2+\frac 23+\frac 1d, \,d], \quad \text{(Luc\`a-Rogers \cite{LR})}.
\end{cases}
$$

For $d\geq 3$, we obtain the following lower bound of $\beta_d(\alpha)$, $\alpha\in(0,d]$, which improves the previously best known results above for all $\al\in (d/2,d)$: 
\begin{theorem}\label{AvrDec}
Let $d\geq 3$ and $\alpha\in (0,d]$. Then, if $d=3$,
$$
\be_3(\al)\geq \begin{cases}\frac{2\al}{3}, & \alpha\in (0,2],\\ \frac{4}{3}, & \alpha\in (2,\frac{19}{9}],\\ \frac{3}{4}\alpha-\frac{1}{4}, &\alpha\in (\frac{19}{9},3], \end{cases}
$$if $d\geq 4$,
$$
\beta_d(\alpha)\geq \max\left(\beta_d^0(\alpha), \alpha-1+\frac{d-\alpha}{d+1}\right), 
$$with $\beta_d^0(\alpha)$ defined as
$$
\beta_d^0(\alpha):=\begin{cases} \frac{(d-1)^2}{d}, & \alpha\in(\#_d, d],\\ 
\frac{(2d-3-2S_4^d)\alpha}{2d}+\frac{1}{d}+S_4^d, & \alpha\in (d-1,\#_d],\\ 
\frac{(d-1-S_4^d)\alpha}{d}-\frac{1}{2}+\frac{3}{2d}+S_4^d,& \alpha\in (d-2, d-1],\\
\frac{(d-1-S_\ell^d)\alpha}{d}-\frac{1}{2}+\frac{\ell-1}{2d}+S_\ell^d, & \alpha\in (d-\frac{\ell}{2}, d-\frac{\ell}{2}+\frac{1}{2}],\,\forall 5\leq\ell\leq d,\\
\frac{(d-1)\alpha}{d}, & \alpha\in (0,\frac{d}{2}],
\end{cases}
$$where $S_\ell^d:=\sum_{i=\ell}^d\frac{1}{i}$ and $\#_d:=\frac{2d(d-2-S_4^d)}{2d-3-2S_4^d}$.
\end{theorem}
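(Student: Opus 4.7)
My plan is to derive Theorem \ref{AvrDec} by reducing the spherical average decay to the weighted restriction bounds of Theorems \ref{wtRes-3} and \ref{wtRes-d} via Mattila's scheme, and then extending to the full range $\alpha\in(0,d]$ by combining monotonicity, a standard $s$-energy bound, and an iteration on scales. The Mattila reduction (to be carried out in Section \ref{sec:Pre}) converts a weighted restriction inequality of the form $\|Ef\|_{L^p(B_R;H\,dx)}\lessapprox R^{s}\|f\|_{L^2}$, valid for every $H\in\cf_{\alpha,d}$, into an upper bound on $\|\widehat\mu(R\cdot)\|_{L^2(\ZS^{d-1})}^2$ for $\alpha$-dimensional $\mu$. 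Applying this to Theorem \ref{wtRes-3} produces the segment $\beta_3(\alpha)\geq 2\alpha/3$ on $(0,2]$, and applying it to Theorem \ref{wtRes-d} produces the bottom segment $\beta_d^0(\alpha)\geq (d-1)\alpha/d$ on $(0,d/2]$.

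For $\alpha$ outside this central range I would draw on two elementary inputs. First, monotonicity: any $\alpha$-dimensional measure is also $\alpha'$-dimensional for $\alpha'<\alpha$, so $\beta_d(\alpha)$ is non-decreasing in $\alpha$. Second, the classical energy estimate: finiteness of the $s$-energy $I_s(\mu)$ for $s<\alpha$ yields $\int_{|\xi|\sim R}|\widehat\mu|^2\,d\xi\lesssim R^{d-\alpha+\epsilon}$, hence $\beta_d(\alpha)\geq\alpha-1$ throughout $(0,d]$. In $d=3$, combining $\beta_3(2)\geq 4/3$ with monotonicity immediately gives the plateau $\beta_3(\alpha)\geq 4/3$ on $(2,19/9]$. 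The segment $(3\alpha-1)/4$ on $(19/9,3]$, which is precisely the specialisation of the formula $\alpha-1+(d-\alpha)/(d+1)$ to $d=3$, and the analogous term entering the max for $d\geq 4$ both arise by applying Mattila's reduction to a bilinear (or $(d+1)$-linear) restriction input for $E$; the slope $d/(d+1)$ in these bounds reflects that input.

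The more intricate formula $\beta_d^0(\alpha)$ featuring the partial sums $S_\ell^d=\sum_{i=\ell}^d 1/i$ is then produced by iterating the weighted restriction scheme on finer and finer scales. Starting from Theorem \ref{wtRes-d} at $\alpha_0=(d+1)/2$, I would decompose $\mu$ into pieces on a hierarchy of dyadic radii; at level $j$ one pigeonholes a subfamily of balls on which $\mu$ effectively behaves as a $(\alpha-j)$-dimensional measure at a smaller scale, rescales the piece to the unit ball, and reapplies Theorem \ref{wtRes-d} to the rescaled extension problem. Each level contributes an additive improvement of order $1/\ell$ to the decay exponent, and summing these contributions assembles the telescoping sum $S_\ell^d$; stitching the resulting estimates together across the thresholds $\alpha=d-\ell/2$ for $\ell=4,5,\dots,d$ then reconstructs $\beta_d^0$ case by case.

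The main obstacle I expect is making this iteration quantitatively sharp. One must verify that, after each rescaling, the decomposed pieces of $\mu$ continue to satisfy a weight condition in some $\cf_{\alpha',d}$ with the correct dimension parameter $\alpha'$ so that Theorem \ref{wtRes-d} applies with the desired exponent, that the $R^\epsilon$ losses encoded in $\lessapprox$ accumulate to only $R^\epsilon$ overall across the nested pigeonholings, and that each segment of the piecewise expression $\beta_d^0$ genuinely dominates the simpler linear term $\alpha-1+(d-\alpha)/(d+1)$ on its designated subinterval, so that the $\max$ in the theorem statement is actually attained by $\beta_d^0$ exactly where claimed.
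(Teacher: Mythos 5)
Your reduction from weighted restriction estimates to the decay rates $\beta_d(\alpha)$ is the same as the paper's: the duality/Mattila scheme of Proposition \ref{wtRestoFalc} yields $\beta_d(\alpha)\geq 2(\frac{\alpha}{p}-\gamma)$ (Remark \ref{rmk:wtRestoAD}), and your monotonicity observation is a legitimate alternative way to get the plateau $\beta_3\geq 4/3$ on $(2,19/9]$ (the paper instead gets $4/3$ on all of $(2,3]$ from $\gamma_3^0(\alpha)=\frac{\alpha}{3}-\frac{2}{3}$ and then takes the max). However, there are two genuine problems with the rest.

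First, you misattribute the term $\alpha-1+\frac{d-\alpha}{d+1}$ (and its $d=3$ specialisation $\frac34\alpha-\frac14$) to ``a bilinear (or $(d+1)$-linear) restriction input.'' In the paper this term comes from the \emph{linear} refined Strichartz estimate: Corollary \ref{cor:linL2} with $m=d$ gives the weighted $L^2$ bound $\|Ef\|_{L^2(B_R;Hdx)}\lessapprox R^{\frac12-\frac{d-\alpha}{2(d+1)}}\|f\|_{L^2}$, and the duality with $p=2$ then yields exactly $\beta_d(\alpha)\geq\alpha-1+\frac{d-\alpha}{d+1}$ (see \eqref{AvrDec0}). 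A multilinear restriction input does not obviously produce this exponent, and you give no derivation.

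Second, and more seriously, your proposed derivation of $\beta_d^0(\alpha)$ is not viable as described. The harmonic sums $S_\ell^d=\sum_{i=\ell}^d\frac1i$ do not arise from a multiscale decomposition of the measure $\mu$ into ``effectively $(\alpha-j)$-dimensional'' pieces at dyadic scales. They arise on the restriction side: the paper first proves the stronger weighted restriction estimate of Theorem \ref{AvrDec-wtRes} for \emph{all} $\alpha\in(0,d]$ by polynomial partitioning with induction on the dimension $m$ of an algebraic variety, where the tangent sub-case produces the recursion $\gamma_m=(\frac12-\frac{d-\alpha}{2m})\cdot\frac1m+\gamma_{m-1}\cdot(1-\frac1m)$ and the transverse sub-case imposes $\gamma_m\geq-\frac{d}{4m}+\frac14$; solving this recursion across $m=\ell,\dots,d$ telescopes into $S_\ell^d$, and the thresholds $\alpha=d-\frac{\ell}{2}$ mark where the transverse constraint stops binding. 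Your scheme has no analogue of this: Theorem \ref{wtRes-d} is only stated for $\alpha\in[\frac d2,\frac{d+1}{2}]$ so it cannot be ``reapplied'' at each level for general $\alpha$; rescaling a pigeonholed piece of an $\alpha$-dimensional measure does not put the resulting weight in a class $\cf_{\alpha',d}$ with a usefully smaller $\alpha'$; and nothing in an iteration over dyadic scales would yield additive increments of exactly $\frac1i$ indexed by $i=\ell,\dots,d$ — in the paper that index runs over intermediate \emph{dimensions}, not scales. Without the polynomial partitioning induction (or an equivalent mechanism), the formula $\beta_d^0(\alpha)$ is not established.
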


In the higher dimensional case of Theorem \ref{AvrDec} above, $\alpha-1+\frac{d-\alpha}{d+1}$ gives the better lower bound if $\alpha$ is large while $\beta^0_d(\alpha)$ is better if $\alpha$ is small. We also point out that for $\alpha\in (0,d/2)$, the lower bound in Theorem \ref{AvrDec} is not as good as the result of Mattila \cite{M}.

One of the main steps in the proof of Theorem \ref{AvrDec} is demonstrating the following weighted restriction estimates, which are natural extensions of Theorem \ref{wtRes-3}, \ref{wtRes-d} to all $\alpha\in (0,d]$.

\begin{theorem}\label{AvrDec-wtRes}
Let $d\geq 3$ and $\alpha\in (0,d]$. Then, 
\[
\|Ef\|_{L^{2d/(d-1)}(B_R;Hdx)}\lessapprox R^{\gamma^0_d(\alpha)}\|f\|_{L^2}
\]holds for all $f\in L^2(B^{d-1})$, all $R>1$ and all $H\in\mathcal{F}_{\alpha,d}$, where
\[
\gamma^0_{3}(\alpha):=\begin{cases}
0, & \alpha\in (0,2],\\
\frac{\alpha}{3}-\frac{2}{3}, & \alpha\in (2,3],
\end{cases}
\]
\[
\gamma^0_d(\alpha):=\begin{cases}
\frac{(d-1)(\alpha+1-d)}{2d},& \alpha\in (\#_d,  d],\\
\frac{(1+2S_4^d)\alpha}{4d}-\frac{1}{2d}-\frac{S^d_4}{2},& \alpha\in (d-1, \#_d],\\
\frac{S^d_4 \alpha}{2d}+\frac{1}{4}-\frac{3}{4d}-\frac{S^d_4}{2},& \alpha\in (d-2, d-1],\\
\frac{S^d_\ell \alpha}{2d}+\frac{1}{4}-\frac{\ell-1}{4d}-\frac{S^d_\ell}{2}, &\alpha\in (d-\frac{\ell}{2}, d-\frac{\ell}{2}+\frac{1}{2}],\,\forall 5\leq\ell\leq d,\\
0,& \alpha\in (0,\frac{d}{2}],
\end{cases}\quad (d\geq 4)
\]and $S_\ell^d$, $\#_d$ are defined as in Theorem \ref{AvrDec} above.
\end{theorem}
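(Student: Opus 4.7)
The statement subsumes Theorems~\ref{wtRes-3} and~\ref{wtRes-d}: for $d = 3$ and $\alpha \in (0, 2]$, the bound $\gamma^0_3 = 0$ is exactly Theorem~\ref{wtRes-3}, and for $d \geq 4$ substituting $\ell = d$ into the formula for $\gamma^0_d$ on $(d/2, (d+1)/2]$ reproduces the exponent $\alpha/(2d^2) - 1/(4d)$ of Theorem~\ref{wtRes-d}. The remaining task is to extend these base estimates to cover all of $(0, d]$.

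For $d \geq 4$ and $\alpha \in (0, d/2]$, the inclusion $\cf_{\alpha, d} \subset \cf_{d/2, d}$ (immediate from $r^\alpha \leq r^{d/2}$ when $r \geq 1$ and $\alpha \leq d/2$) lets us apply Theorem~\ref{wtRes-d} at $\alpha' = d/2$, yielding $\gamma^0_d = 0$. For $d = 3$ and $\alpha \in (2, 3]$, an elementary rescaling suffices: $\tilde H := H/R^{\alpha - 2}$ lies in $\cf_{2, 3}$ on $B_R$ (since $r^\alpha/R^{\alpha - 2} \leq r^2$ for $1 \leq r \leq R$, while larger $r$ are handled by restricting to $B_R$), so Theorem~\ref{wtRes-3} applied to $\tilde H$ followed by multiplying the output by $R^{(\alpha - 2)/3}$ produces $\gamma^0_3(\alpha) = (\alpha - 2)/3$ and closes the case $d = 3$.

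The substantive work lies in the range $\alpha \in ((d+1)/2, d]$ for $d \geq 4$. The piecewise linear structure of $\gamma^0_d$, with breakpoints at $d - \ell/2$ ($5 \leq \ell \leq d$), at $d - 1$, and at $\#_d$, reflects an iterative argument indexed by $\ell$, starting from $\ell = d$ (Theorem~\ref{wtRes-d}) and descending. At step $\ell$, I would partition $B_R$ into sub-balls of a carefully chosen intermediate radius $r = R^{s_\ell}$; on each sub-ball, the hypothesis $\int_{B_r} H \leq r^\alpha$ effectively lowers the dimensional exponent of $H$ at scale $r$, so that the estimate from the previous step applies after a rescaling of the kind used in the $d = 3$ case. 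Combining this with a wave-packet analysis of $Ef$ (to exploit the $\ell^2$ almost-orthogonality of wave packets associated to the paraboloid) and summing over the $(R/r)^d$ sub-balls via H\"older produces a recursion for the exponent. Optimizing $s_\ell$ at each level yields the slope $S^d_\ell/(2d)$; the harmonic tail $S^d_\ell = \sum_{i = \ell}^d 1/i$ arises as a telescoping sum across the levels, and the breakpoints $d - \ell/2$ are the $\alpha$-values at which the optimal $s_\ell$ transitions. The endpoint piece $(d-1)(\alpha + 1 - d)/(2d)$ on $(\#_d, d]$ is a direct rescaling from $\alpha_0 = \#_d$ with slope $(d-1)/(2d)$ (the $d = 3$-type rescaling), and $\#_d$ is precisely the crossover above which this trivial-slope continuation overtakes the iterative bound.

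The main technical obstacle will be organizing the multiscale decomposition and wave-packet analysis so that the recursion closes with exactly the slopes and breakpoints asserted; in particular, verifying that the optimization over $s_\ell$ reproduces the harmonic-tail coefficient $S^d_\ell/(2d)$ requires careful bookkeeping across the levels. The continuity of $\gamma^0_d$ at each breakpoint serves as a useful internal consistency check on the calculation.
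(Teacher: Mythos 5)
Your reductions for the easy ranges are correct and match what the theorem requires: the inclusion $\cf_{\al,d}\subset\cf_{d/2,d}$ for $\al\leq d/2$ does give $\ga^0_d=0$ there, and for $d=3$, $\al\in(2,3]$ the trick of replacing $H$ by $R^{-(\al-2)}H\chi_{B_R}\in\cf_{2,3}$ and paying $R^{(\al-2)/3}$ does recover $\ga^0_3(\al)=\frac{\al}{3}-\frac23$ from Theorem \ref{wtRes-3} (the paper instead proves the whole range $\al\in(0,3]$ in one induction in Section \ref{sec:wtRes-3}, where $\ga^0_3=\frac{\al-2}{3}$ is forced by closing the parabolic-rescaling step). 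Likewise, the top piece $\frac{(d-1)(\al+1-d)}{2d}$ on $(\#_d,d]$ is indeed the rescaling/narrow-part constraint $\ga\geq\frac{\al+1}{p_d}-\frac{d-1}{2}$ of Lemma \ref{wtnr}, consistent with your ``slope $(d-1)/(2d)$ continuation.''

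However, the heart of the theorem --- $d\geq4$ and $\al\in(\frac d2,\#_d]$ --- is not proved. Your proposed mechanism (partition $B_R$ into sub-balls of radius $R^{s_\ell}$, apply the previous step after rescaling, sum via H\"older, optimize $s_\ell$) is asserted to yield the slopes $S^d_\ell/(2d)$, but the optimization is never carried out, and it would not produce these exponents: a black-box use of Theorem \ref{wtRes-d} plus spatial decomposition and H\"older is exactly the kind of argument the paper identifies (Subsection \ref{subsec:compare}) as strictly weaker than the stated bounds. In the paper, $\ell$ does not index a spatial scale at all; it indexes an intermediate \emph{dimension} in a polynomial-partitioning induction (Proposition \ref{prop:AvrDec1}). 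The harmonic tail $S^d_\ell=\sum_{i=\ell}^d\frac1i$ arises from unrolling the recursion $\ga_m=\frac1m(\frac12-\frac{d-\al}{2m})+(1-\frac1m)\ga_{m-1}$, whose weights $\frac1m$ come from interpolating the broad $L^{p_m}$ norm between the weighted $L^2$ estimate of Corollary \ref{cor:linL2} (linear refined Strichartz applied to wave packets tangent to an $m$-dimensional variety) and the $BL^{p_{m-1}}$ induction hypothesis; the breakpoints $d-\frac{\ell}{2}$ are where the transverse-case constraint $\ga_m\geq-\frac{d}{4m}+\frac14$ overtakes this recursion, and the base case $m=3$ uses the bilinear refined Strichartz of Corollary \ref{cor:bilLp}. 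None of these ingredients --- broad norms, tangent/transverse splitting, the $\al$-dependent induction hypothesis at every intermediate dimension --- appear in your plan, and they cannot be recovered by post-processing the final estimate of Theorem \ref{wtRes-d}, since that theorem is only available for $\al\in[\frac d2,\frac{d+1}{2}]$ and the improvement for larger $\al$ happens inside the dimensional induction. As written, the exponents in the intermediate range are reverse-engineered rather than derived, so this portion of the proof is a genuine gap.
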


\begin{remark}
Note that by Tomas-Stein restriction theorem and H\"older's inequality, for $d\geq 3$ we have 
$$
\|Ef\|_{L^{2d/(d-1)}(B_R;Hdx)}\lesssim R^{\frac{\al(d-1)}{2d(d+1)}}\|f\|_{L^2}
$$
for all $f\in L^2(B^{d-1})$, all $R>1$ and all $H\in\mathcal{F}_{\alpha,d}$. This estimate is better than Theorem \ref{AvrDec-wtRes} when $\al \geq d-\frac{1}{d}$. In our approach, when $\al$ is large, the exponent $\ga_d^0(\al) = \frac{(d-1)(\al+1-d)}{2d}$ comes from the constraint of parabolic rescaling when reducing the linear estimate to a (weak) bilinear one, and that estimate is not good enough. While for application to the average decay rates, we prove a linear $L^2$ estimate using refined Strichartz directly, which gives the decay rates $\al-1+\frac{d-\al}{d+1}$ in Theorem \ref{AvrDec} and improves previous best known results when $\al$ is large (see Section \ref{RS} for details).
\end{remark}

\begin{remark} \label{sphere}
It follows from the variable-coefficient generalization as discussed in \cite{lG16}, that the same weighted restriction estimates in Theorem \ref{wtRes-3}, \ref{wtRes-d}, \ref{AvrDec-wtRes} above still hold true if one replaces the paraboloid by sphere or other positively curved hypersurfaces. In particular, to deduce Theorem \ref{Falc} and \ref{AvrDec} from the newly obtained weighted restriction estimates using Mattila's approach, as described in Subsection \ref{sec:Mattila} below, it is fine to replace the paraboloid in the weighted restriction estimates by the sphere. 
\end{remark}

The estimates of the Fourier decay rate of fractal measures in Theorem \ref{AvrDec} also imply the following improved result for the pinned distance set problem, by applying Theorem 1.4 of a very recent work of Liu \cite{Liu}.
\begin{corollary}
Let $d\geq 3$ and $E\subset \mathbb{R}^d$ be a compact set with
$$
{\rm dim}(E)> \alpha,\quad \alpha:=\begin{cases} 1.8, & d=3,\\ \frac d2 +\frac 14 +\frac{d+1}{4(2d+1)(d-1)}, & d\geq 4.\end{cases}
$$
Then there exists $x\in E$ such that its pinned distance set
\[
\Delta_x(E):=\{|x-y|:\,y\in E\}
\]has positive Lebesgue measure.
\end{corollary}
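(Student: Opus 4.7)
The plan is to obtain the corollary as a direct consequence of Theorem \ref{AvrDec} combined with the recently proved pinned distance criterion \cite[Theorem 1.4]{Liu}. The structure of the argument will mirror the deduction of Theorem \ref{Falc} from Theorem \ref{AvrDec} via Mattila's scheme, except that Mattila's scheme is replaced by Liu's criterion, which produces a pin $x \in E$ rather than merely positivity of the full distance set $\Delta(E)$.

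First, I would invoke Frostman's lemma to pick, for any $\alpha'$ strictly between the threshold $\alpha$ in the statement and $\dim(E)$, a compactly supported probability measure $\mu$ on $E$ that is $\alpha'$-dimensional in the sense of the definition preceding Theorem \ref{AvrDec}. Second, I would apply Theorem \ref{AvrDec} to $\mu$ to obtain the spherical $L^2$ Fourier decay
\[
\left\|\widehat{\mu}(R\cdot)\right\|_{L^2(\mathbb{S}^{d-1})}^2 \leq C_{\alpha',\mu}\, R^{-\beta}
\]
for every $\beta < \beta_d(\alpha')$. Third, I would feed $\mu$ together with this decay estimate into \cite[Theorem 1.4]{Liu}, whose output is the existence of a pin $x \in \mathrm{supp}(\mu) \subset E$ with $|\Delta_x(E)| > 0$, provided the $\alpha'$-dimensionality and the decay exponent $\beta$ jointly satisfy a quantitative condition (the same kind of condition that underlies Mattila's scheme in the proof of Theorem \ref{Falc}).

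The only nontrivial point is the bookkeeping identifying the threshold $\alpha$: one must check that, at the specified cut-offs ($\alpha = 1.8$ for $d=3$ and $\alpha = \frac{d}{2} + \frac{1}{4} + \frac{d+1}{4(2d+1)(d-1)}$ for $d \geq 4$), the lower bound on $\beta_d(\alpha')$ supplied by Theorem \ref{AvrDec} already meets the quantitative requirement of \cite[Theorem 1.4]{Liu}. Since Liu's criterion is set up so that its threshold coincides with the Mattila threshold for the same Fourier decay input, this verification is exactly the calculation performed in deducing Theorem \ref{Falc} from Theorem \ref{AvrDec}, and will therefore yield the identical cut-offs. I do not anticipate any real obstacle beyond this bookkeeping; the entire analytic difficulty of the result is absorbed into Theorem \ref{AvrDec} and the black-box \cite[Theorem 1.4]{Liu}.
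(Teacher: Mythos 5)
Your proposal is correct and follows exactly the route the paper intends: the paper itself deduces this corollary in one line from Theorem \ref{AvrDec} together with \cite[Theorem 1.4]{Liu}, whose hypothesis is the same spherical $L^2$ decay condition as in Mattila's scheme, so the threshold computation is identical to the one in the proof of Theorem \ref{Falc}. Your filling-in of the Frostman step and the bookkeeping is exactly the intended (and omitted) detail.
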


In addition, Theorem \ref{AvrDec} implies directly improved upper bounds of the Hausdorff dimension of divergence sets of solutions to wave equations, by applying \cite[Proposition 1.5]{LR}. We omit the details.

The key ingredients in our proofs are the method of polynomial partitioning developed by the second author \cite{lG} \cite{lG16}
and (linear and bilinear) refined Strichartz estimates obtained by Li and the first two authors in \cite{DGL}. Polynomial partitioning has proved to be extremely powerful in the study of restriction type problems such as the restriction estimates for the paraboloid \cite{lG, lG16} , the cone \cite{OW} and H\"ormander-type oscillatory integral operators \cite{GHI}. The sharp Schr\"odinger maximal estimate in $\mathbb{R}^2$ \cite{DGL} was also recently derived via the polynomial partitioning scheme, combined with the aforementioned refined Strichartz estimates.

Compared to \cite{Erdg05}, where the previously best known result for Falconer's problem in $d\geq 3$ was proved via a similar route through weighted restriction estimates, our argument has the following advantages. First, the use of polynomial partitioning enables one to obtain a more delicate estimate by inducting on dimensions and extracting information from every intermediate dimension. Second, in every fixed intermediate dimension, compared to H\"older's inequality that is used in \cite{Erdg05}, the (linear and bilinear) refined Strichartz estimates provide much finer estimates. The latter advantage is particularly important for deriving the three-dimensional case (Theorem \ref{wtRes-3}), where there is not much information available from lower dimensions while the bilinear refined Strichartz estimate plays a key role. 

The structure of the paper is as follows. In Section \ref{sec:Pre}, we review preliminaries including parabolic rescaling, wave packet decomposition and Matilla's approach, and explain the connections between Theorem \ref{wtRes-3}, \ref{wtRes-d}, \ref{AvrDec-wtRes}, \ref{AvrDec} and how they imply Theorem \ref{Falc}. In Section \ref{RS}, we review linear and bilinear refined Strichartz estimates, and obtain some partial improvements towards Falconer's distance set problem and average decay rates. 
In Section \ref{sec:wtRes-3}, we prove Theorem \ref{AvrDec-wtRes} in the case $d=3$, using polynomial partitioning and bilinear refined Strichartz. The proof of Theorem \ref{wtRes-d} is presented in Section \ref{sec:wtRes-d}, and additional ingredients that are needed in generalizing it to Theorem \ref{AvrDec-wtRes} when $d\geq 4$ are discussed in Section \ref{sec:wtRes-d-al}. \\

\noindent \textbf{List of Notations:}

We write $A\lessapprox B$ if $A\leq C_\e R^\e B$ for any $\e>0$, $R>1$; $A\lesssim_\e B$ if $A \leq C_\e B$, $A \lesssim_{K,\e} B$ if $A \leq C_{K,\e} B$, etc; $A\lesssim B$ if $A \leq C B$ for a constant $C$ which only depends on some unimportant fixed variables such as $d,\al$ and sometimes $\e$ too. 

For each $\e>0$, there is a sequence of small parameters
$$
\delta_{\rm deg} \ll \delta\ll\delta_{d-1}\ll\delta_{d-2}\ll\cdots\ll\delta_1\ll\delta_0\ll\e.
$$
For $Z=Z(P_1,\cdots,P_{d-m})$, $D_Z$ denotes an upper bound of the degrees of $P_1,\cdots,P_{d-m}$. Usually $D_Z\leq R^{\delta_{\rm deg}}$ unless noted otherwise.

Let $m$ be a dimension in the range $1\leq m\leq d$. Denote $$r_m:=\frac{2(m+1)}{m}=p_{m+1} < p_m:=\frac{2m}{m-1} < q_m:=\frac{2(m+1)}{m-1}\,.$$
Let $B^m_R$ stand for a ball of radius $R$ in $\ZR^m$, $B^m$ denote the unit ball in $\ZR^m$ and $B_R$ abbreviate $B^d_R$ for simplicity.

\begin{acknowledgement}
The work of X. Du is supported by the National Science Foundation under Grant No. 1638352 and the Shiing-Shen Chern Fund. L. Guth is supported by a Simons Investigator Award. The work of R. Zhang is supported by the National Science Foundation under Grant No. 1638352 and the James D. Wolfensohn Fund.
\end{acknowledgement}

\section{Preliminaries} \label{sec:Pre}
\setcounter{equation}0

\subsection{Parabolic rescaling}

\begin{lemma} \label{ParResc}
There exists an absolute constant $C$ so that the following holds true.
Let $p\geq 1$,$\al \in (0,d]$ and $\tilde R$ be a sufficiently large constant. Suppose that 
\begin{equation} \label{resc0}
\|Ef\|_{L^p(B_{R};Hdx)} \leq \tilde C  R^{\ga} \|f\|_{L^2}
\end{equation}
holds for all $f\in L^2(B^{d-1})$, all $1<R\leq \tilde R/2$ and all $H\in\cf_{\al,d}$.
Then
\begin{equation*}
\|Ef\|_{L^p(B_R;Hdx)} \leq C \tilde C  K^{\frac{\al+1}{p}-\frac{d-1}{2}-\ga}R^{\ga} \|f\|_2
\end{equation*}
holds for all $H\in\cf_{\al,d}$, all $1<R\leq \tilde R$ and all $f \in L^2$ with support in some ball of radius $1/K$ inside $B^{d-1}$, where $K$ is any large constant $<\tilde R$.
\end{lemma}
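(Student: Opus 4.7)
The plan is the standard parabolic change of variables adapted to the $\cf_{\alpha,d}$-weighted setting. Let $\omega_0 \in B^{d-1}$ be the center of the $1/K$-ball containing the support of $f$. I would substitute $\omega = \omega_0 + K^{-1}\omega'$ with $\omega' \in B^{d-1}$, expand $|\omega|^2 = |\omega_0|^2 + 2K^{-1}\omega_0\cdot\omega' + K^{-2}|\omega'|^2$, and absorb the modulation and linear phase into the affine change of variables
\[
\tilde x := \left(\frac{x' + 2x_d \omega_0}{K},\, \frac{x_d}{K^2}\right).
\]
This yields the identity $|Ef(x)| = K^{-(d-1)} |Eg(\tilde x)|$, where $g(\omega') := f(\omega_0 + K^{-1}\omega')$, together with $\|g\|_{L^2} = K^{(d-1)/2} \|f\|_{L^2}$, Jacobian $dx = K^{d+1}\,d\tilde x$, and the image of $B_R$ under $x\mapsto \tilde x$ contained in a ball of radius at most $4R/K$ in $\tilde x$-space.

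The crucial step is then to control the pushforward weight $\tilde H(\tilde x) := K^{d+1} H(x(\tilde x))$. The preimage under $\tilde x \mapsto x$ of a ball $B(\tilde x_0, \tilde r) \subset \ZR^d$ is a tube-like region of transverse thickness $\sim K\tilde r$ and length $\sim K^2\tilde r$ along the direction determined by $\omega_0$ and the $x_d$-axis. Covering this tube by $O(K)$ balls of radius $K\tilde r$ strung along its axis and applying the $\cf_{\alpha,d}$-bound on $H$ at scale $K\tilde r$ gives, for every $\tilde r \geq 1$,
\[
\int_{B(\tilde x_0,\tilde r)} \tilde H\, d\tilde x \;=\; \int_{\text{preimage}} H\, dx \;\lesssim\; K\cdot (K\tilde r)^\alpha \;=\; K^{\alpha+1}\tilde r^\alpha,
\]
so a constant multiple of $K^{-(\alpha+1)} \tilde H$ lies in $\cf_{\alpha,d}$.

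Next I would apply the hypothesis \eqref{resc0} to $g$ at radius $4R/K$, which falls in the allowed range $(1, \tilde R/2]$ provided $K$ is moderately large (the small-$K$ regime can be absorbed into the absolute constant $C$). Undoing the affine change of variables and using $\|g\|_{L^2} = K^{(d-1)/2}\|f\|_{L^2}$, the powers of $K$ inside the $L^p$-norm collect as $-p(d-1) + (\alpha+1) + p(d-1)/2 - p\gamma$; after extracting the $p$-th root this produces exactly the $K$-exponent $(\alpha+1)/p - (d-1)/2 - \gamma$ claimed in the conclusion.

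The argument is essentially bookkeeping and does not contain a deep obstacle, but the one step to execute with care is the covering of the anisotropic preimage tube: using $O(K)$ balls of radius $K\tilde r$ yields the sharp factor $K^{\alpha+1}$, whereas any coarser covering (for instance by a single ball of radius $K^2\tilde r$) would produce $K^{2\alpha}$ and would ruin the final $K$-exponent. Everything else is routine unwinding of the change of variables.
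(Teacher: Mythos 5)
Your proposal is correct and follows essentially the same route as the paper: the same affine change of variables $T(x)=((x'+2x_d\omega_0)/K,\,x_d/K^2)$, the same rescaled weight (your $K^{-(\alpha+1)}\tilde H$ is exactly the paper's $H^{*}=K^{d-\alpha}H\circ T^{-1}$), and the same key covering of the anisotropic preimage of a $\tilde r$-ball by $O(K)$ balls of radius $\sim K\tilde r$ to get the sharp factor $K^{\alpha+1}$, after which the bookkeeping of $K$-exponents matches the claimed $K^{\frac{\alpha+1}{p}-\frac{d-1}{2}-\gamma}$.
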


\begin{proof}
Let $H\in \cf_{\al,d}$ and $f\in L^2$ with ${\rm supp} f \subset B(\om_0,1/K)\subset B^{d-1}$. We write $\om = \om_0 + 
\frac 1 K\xi \in B(\om_0, 1/K)$, then by change of variables,
$$
|Ef(x',x_d)|= \frac{1}{K^{(d-1)/2}}|Eg(y',y_d)|\,,
$$
where $g\in L^2(B^{d-1})$ with $\|g\|_2=\|f\|_2$, more precisely,
$$
g(\xi)=\frac{1}{K^{(d-1)/2}}f(\om_0+\frac1K \xi) \,,
$$
and the new coordinates $(y',y_d)$ are related to the old coordinates $(x',x_d)$ by 
\begin{equation*}
\begin{cases}
y'=\frac 1K x' +\frac{2x_d}{K}\om_0\,,\\
y_d=\frac{x_d}{K^2}\,.
\end{cases}
\end{equation*}
For simplicity, we denote the relation above by $y=T(x)$.
Therefore,
$$
\|Ef(x)\|_{L^p(B_R;H(x)dx)}=K^{\frac{d+1}{p}-\frac{d-1}{2}+\frac{\al-d}{p}} \|Eg(y)\|_{L^p(\tilde B; H^*(y)dy)}\,, 
$$
where $\tilde B=T(B_R)$ is contained in a box of dimensions $\sim \frac RK \times \cdots \times \frac RK \times \frac{R}{K^2}$, and the function $H^*$ is given by
$$
H^*(y)=K^{d-\al}H(T^{-1}y)\,.
$$
Note that for $\forall x_0\in \ZR^d, \forall r\geq 1$, 
$$
\int_{B(x_0,r)} H^*(y) \,dy = K^{d-\al}K^{-(d+1)} \int_{\bar B} H(x) \, dx\,,
$$
where $\bar B = T^{-1} (B(x_0,r))$ is contained in $K$ balls of radius $\sim Kr$, hence it follows from $H\in \cf_{\al,d}$ that
$$
\int_{B(x_0,r)} H^*(y) \,dy  \lesssim K^{-\al-1} K (Kr)^{\al}=r^\al\,,
$$
i.e. $H^*\in \cf_{\al,d}$ up to a constant. Applying \eqref{resc0} to functions $g$ and $H^*$ and physical radius $R/K$ we obtain
$$
\|Ef\|_{L^p(B_R;Hdx)} \lesssim
\tilde C K^{\frac{\al+1}{p}-\frac{d-1}{2}-\ga}R^{\ga} \|f\|_{L^2}\,.
$$
This completes the proof.
\end{proof}

\subsection{Mattila's approach}\label{sec:Mattila}

Our study of Falconer's distance set problem follows a scheme that goes back to Mattila \cite{M}. We briefly recall this approach here. See also for example Lemma 2.1 in \cite{Erdg06}.

Let $d\sigma$ be the $(d-1)$-dimensional surface measure on $S^{d-1}$ and $E_{S^{d-1}}$ stand for the extension operator over the unit sphere $S^{d-1}$.

\begin{theorem}[Mattila \cite{M}]\label{Mthm}
Fix $\alpha\in (d/2,d)$. Assume that for all $\alpha$-dimensional compactly supported probability measure $\mu$ there holds
\begin{equation}\label{Mcondition}
\|\widehat{\mu}(R\cdot)\|_{L^2(S^{d-1})}\leq C_\mu R^{\frac{\alpha-d}{2}},\quad \forall R>1.
\end{equation}
Then Falconer's conjecture holds for $\alpha$, i.e. for any compact subset $E$ of $\ZR^d$,
\[
{\rm dim}(E)>\al \Rightarrow |\Delta(E)|>0.
\]
\end{theorem}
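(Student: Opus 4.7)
I would follow the classical Mattila reduction, in three blocks: construct a Frostman measure, pass to the distance measure on the real line, and reduce the $L^2$-boundedness of its Fourier transform to the spherical-average hypothesis via Mattila's integral identity.

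First, since ${\rm dim}(E) > \alpha$, Frostman's lemma produces a Borel probability measure $\mu$ supported on $E$ satisfying $\mu(B(x,r)) \lesssim r^\alpha$ (so $\mu$ is $\alpha$-dimensional in the sense of the paper's definition) and with finite $s$-energy
$$I_s(\mu) := \iint |x-y|^{-s}\, d\mu(x)\, d\mu(y) < \infty$$
for every $s < {\rm dim}(E)$, in particular for some $s \in (\alpha, {\rm dim}(E))$ and also for $s = \alpha$ itself after a harmless reduction. Next, introduce the distance measure $\nu := \Phi_*(\mu \otimes \mu)$, where $\Phi(x,y) := |x-y|$. Then $\nu$ is a Borel probability measure supported in $\Delta(E) \subset [0,\infty)$. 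If one can show $\nu \in L^2(\mathbb{R})$, then by Cauchy--Schwarz
$$1 = \nu(\Delta(E)) \leq |\Delta(E)|^{1/2}\, \|\nu\|_{L^2(\mathbb{R})},$$
so $|\Delta(E)| > 0$, and by Plancherel it suffices to prove $\widehat{\nu} \in L^2(\mathbb{R})$.

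The heart of the argument is then Mattila's inequality, which controls $\int_1^\infty |\widehat{\nu}(t)|^2\, dt$ in terms of the spherical-average hypothesis and an energy of $\mu$. Starting from
$$\widehat{\nu}(t) = \iint e^{-2\pi i t|x-y|}\, d\mu(x)\, d\mu(y) = \int_{\mathbb{R}^d} e^{-2\pi i t|z|}\, d(\mu * \widetilde\mu)(z),$$
one expands $e^{-2\pi i t|z|}$ through the stationary-phase asymptotics of the surface measure on the sphere of radius $t$,
$$\widehat{\sigma_{tS^{d-1}}}(z) = c_d\, t^{(d-1)/2} |z|^{-(d-1)/2}\bigl(e^{2\pi i t|z|} + e^{-2\pi i t|z|}\bigr) + O\bigl((t|z|)^{-(d+1)/2}\bigr),$$
so that $\widehat{\nu}(t)$, after pairing with $\mu * \widetilde\mu$ and using $\widehat{\mu * \widetilde\mu} = |\widehat{\mu}|^2$, decomposes into a main term proportional to $t^{-(d-1)/2}$ times a spherical integral of $|\widehat{\mu}(t\omega)|^2$ against $|z|^{-(d-1)/2} d(\mu * \widetilde\mu)(z)$, plus lower-order error terms controlled by $I_{(d-1)/2}(\mu)$ and related energies. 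Squaring, integrating in $t \geq 1$, and inserting the hypothesis $\|\widehat{\mu}(t\cdot)\|_{L^2(S^{d-1})}^2 \leq C_\mu^2 t^{\alpha - d}$ yields
$$\int_1^\infty |\widehat{\nu}(t)|^2\, dt \lesssim I_\alpha(\mu) \sup_{t \geq 1} t^{d-\alpha} \|\widehat{\mu}(t\cdot)\|_{L^2(S^{d-1})}^2 + \text{(lower order)} < \infty.$$
The low-frequency part $\int_0^1 |\widehat{\nu}(t)|^2\, dt \leq 1$ is trivial since $\nu$ is a probability measure.

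The main obstacle is the rigorous derivation of Mattila's inequality: one must justify the stationary-phase manipulation at the level of measures rather than functions, and carefully verify that the error terms from the asymptotic expansion of $\widehat{\sigma_{tS^{d-1}}}$ are absorbed by the $\alpha$-energy of $\mu$. This is precisely where the hypothesis $\alpha > d/2$ is used: the weight $|z|^{-(d-1)/2}$ appearing from the sphere asymptotics is integrable against $\mu * \widetilde\mu$ exactly when $\alpha > (d-1)/2$, and the control on lower-order error terms (involving weights $|z|^{-(d+1)/2}$ and beyond) forces $\alpha > d/2$. All of this is standard once one commits to the bookkeeping; see Mattila's original paper and Lemma 2.1 of \cite{Erdg06} cited in the excerpt.
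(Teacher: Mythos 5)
Your proposal is correct and follows essentially the same route as the paper: Frostman's lemma gives an $\alpha$-dimensional measure with $I_\alpha(\mu)<\infty$, the hypothesis reduces the finiteness of Mattila's integral $\int_1^\infty\bigl(\int_{S^{d-1}}|\widehat\mu(R\omega)|^2d\sigma\bigr)^2R^{d-1}\,dR$ to that energy, and this in turn gives square-integrability of the distance measure. The only difference is that you unpack the stationary-phase derivation of Mattila's identity, which the paper simply cites from Mattila and Wolff's lecture notes.
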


\begin{proof}[Sketch of the proof of Theorem \ref{Mthm}]
If $E$ is a compact subset of $\ZR^d$ with $\dim E > \al$, then by Frostman's lemma $E$ supports an $(\al+\e_0)$-dimensional measure $\mu$, for some $\e_0>0$. In particular, $\mu$ is also $\al$-dimensional and the $\al$-dimensional energy of $\mu$ is finite:
$$
I_\al(\mu) := \int \int |x-y|^{-\al} d\mu(x)d\mu(y)<\infty\,.
$$
We have by assumption \eqref{Mcondition}
\begin{equation}
\begin{split}
&\int_{1}^{\infty} \left(\int_{S^{d-1}} |\widehat{\mu}(Rx)|^2 d\sigma(x)\right)^2R^{d-1}dR \\
\lesssim &\int_{1}^{\infty} \left(\int_{S^{d-1}} |\widehat{\mu}(Rx)|^2 d\sigma(x)\right) R^{\al - d}\cdot R^{d-1}dR 
\sim I_{\alpha} (\mu) < \infty\,,
\end{split}
\end{equation}
where the last equivalence follows from the Fourier representation of the energy.

Matilla proved that this estimate is equivalent to some measure supported on $\Delta(E) \bigcup -\Delta (E)$ having its Fourier transform in $L^2 (\ZR)$. This in turn implies $|\Delta(E)|>0$. See also Section B in Chapter 9 of Wolff \cite{WHA}.
\end{proof}

\begin{proposition}\label{wtRestoFalc}
Let $\alpha\in (0,d)$, $p\geq 1$, and $E_{S^{d-1}}g:=(gd\sigma)^\vee$. Suppose that
\begin{equation}\label{eqn:wtRes}
\|E_{S^{d-1}} g\|_{L^p(B_R;Hdx)} \lesssim  R^{\ga+\epsilon} \|g\|_{L^2},\quad \forall R>1
\end{equation}holds for all $g\in L^2(S^{d-1})$ and all $H\in\cf_{\al,d}$, and that
\begin{equation}\label{cri:alpha}
\gamma \leq \alpha (\frac{1}{p}+\frac{1}{2})-\frac{d}{2}.
\end{equation}Then, Falconer's conjecture holds for $\alpha$, i.e.
\[
{\rm dim}(E)>\al \Rightarrow |\Delta(E)|>0.
\]
\end{proposition}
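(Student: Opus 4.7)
The plan is to deduce Proposition \ref{wtRestoFalc} from Mattila's Theorem \ref{Mthm} by showing that the weighted restriction hypothesis \eqref{eqn:wtRes} implies the spherical average decay \eqref{Mcondition}, up to a harmless $R^\epsilon$ loss. Fix a compact $E\subset\mathbb{R}^d$ with $\dim E>\alpha$. By Frostman's lemma, $E$ supports a compactly supported probability measure $\mu$ that is $\alpha'$-dimensional for some $\alpha'\in(\alpha,\dim E)$; in particular $\mu$ is $\alpha$-dimensional, so we are free to plug weights built from $\mu$ into \eqref{eqn:wtRes}, while the extra room $\alpha'-\alpha$ will absorb the $R^\epsilon$ loss at the end.

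The first step is to linearize the desired decay via $L^2(S^{d-1})$ duality:
\[
\|\widehat{\mu}(R\cdot)\|_{L^2(S^{d-1})}=\sup_{\|g\|_{L^2(S^{d-1})}=1}\left|\int\overline{E_{S^{d-1}}g(Rx)}\,d\mu(x)\right|.
\]
I then regularize $\mu$ by convolving with a bump $\phi_{1/R}$ at scale $1/R$ normalized so that $\widehat{\phi_{1/R}}(R\omega)\sim 1$ on $S^{d-1}$. The mollified measure $\tilde\mu:=\mu*\phi_{1/R}$ is absolutely continuous with density $\lesssim R^{d-\alpha}$ and satisfies $\widehat{\tilde\mu}(R\omega)\sim\widehat{\mu}(R\omega)$, so $\mu$ may be replaced by $\tilde\mu$ in the pairing above. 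Rescaling via $y=Rx$ and setting $H(y):=\tilde\mu(y/R)$ turns the pairing into $R^{-d}\int E_{S^{d-1}}g(y)\,H(y)\,dy$. The $\alpha$-dimensionality of $\mu$, combined with the support of $\phi_{1/R}$, yields $\int_{B(y_0,r)}H\,dy\lesssim C_\mu R^{d-\alpha}r^\alpha$ for all $r\geq 1$, so that $H_1:=C_\mu^{-1}R^{-(d-\alpha)}H$ lies in $\mathcal{F}_{\alpha,d}$ (up to an absolute constant) and is supported in a ball of radius $O(R)$.

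With $H_1\in\mathcal{F}_{\alpha,d}$ in hand, H\"older's inequality gives
\[
\int E_{S^{d-1}}g\cdot H_1\,dy\leq\|E_{S^{d-1}}g\|_{L^p(H_1\,dy)}\cdot\|H_1\|_{L^1}^{1/p'}.
\]
Hypothesis \eqref{eqn:wtRes} bounds the first factor by $R^{\gamma+\epsilon}\|g\|_{L^2}$, while the defining property of $\mathcal{F}_{\alpha,d}$ applied on $B_{O(R)}$ gives $\|H_1\|_{L^1}\lesssim R^\alpha$. Assembling all the scale factors yields
\[
\|\widehat{\mu}(R\cdot)\|_{L^2(S^{d-1})}\lesssim R^{-d}\cdot R^{d-\alpha}\cdot R^{\gamma+\epsilon}\cdot R^{\alpha/p'}=R^{\gamma-\alpha/p+\epsilon},
\]
and \eqref{cri:alpha} is precisely $\gamma-\alpha/p\leq(\alpha-d)/2$, producing the Mattila estimate \eqref{Mcondition} up to an extra $R^{2\epsilon}$ on the right-hand side.

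To close, I re-run the integral in Mattila's proof with this slightly weakened decay: the computation produces $\int_1^\infty R^{\alpha-d+2\epsilon}\|\widehat{\mu}(R\cdot)\|_{L^2(S^{d-1})}^2 R^{d-1}\,dR\lesssim I_{\alpha+2\epsilon}(\mu)$, which is finite as soon as $\epsilon<(\alpha'-\alpha)/2$, since $\mu$ is $\alpha'$-dimensional with $\alpha'>\alpha$. This is enough to conclude $|\Delta(E)|>0$ exactly as in the proof of Theorem \ref{Mthm}. The main technical point is the regularization-plus-rescaling step: one must verify that $H$ inherits the $\mathcal{F}_{\alpha,d}$ bound uniformly down to scale $r=1$, and that replacing $\mu$ by $\tilde\mu$ on the sphere costs only absolute constants rather than a power of $R$. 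Both are routine once the scales of $\phi_{1/R}$ and the change of variables are set up correctly, after which the proposition follows from a clean H\"older-and-duality chain.
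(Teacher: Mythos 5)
Your proposal is correct and follows essentially the same route as the paper: linearize $\|\widehat\mu(R\cdot)\|_{L^2(S^{d-1})}$ by duality, build a weight in $\mathcal F_{\alpha,d}$ of total mass $\lesssim R^\alpha$ out of the dilated (mollified) measure, apply the weighted restriction hypothesis together with H\"older to get $\|\widehat\mu(R\cdot)\|_{L^2(S^{d-1})}\lesssim R^{\gamma-\alpha/p+\epsilon}$, and feed this into Mattila's scheme, absorbing the $\epsilon$-loss via the Frostman room $\dim E>\alpha$. The only cosmetic difference is that you mollify $\mu$ directly (absorbing $\widehat\phi\sim 1$ into the dual function $g$), whereas the paper convolves the weight with $|\widehat\psi|$ and removes it on the extension side via $(fd\sigma)^\vee\ast\widehat\psi=(fd\sigma)^\vee$; the computations are equivalent.
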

\begin{proof}

The proof is essentially contained in Wolff \cite{W99} and Erdo\~gan \cite{Erdg06}. We follow their treatment here.

Given (\ref{eqn:wtRes}), it suffices to verify the averaged decay estimate (\ref{Mcondition}) and apply Theorem \ref{Mthm}. Without loss of generality we assume $\mu$ is supported in the unit ball. We use a duality argument. Take an arbitrary $f$ supported on the unit sphere $S^{d-1}$. By (\ref{eqn:wtRes}), we have for all $H\in\cf_{\al,d}$,
\begin{equation}\label{Lpnormextineq}
\left(\int_{B_R} |(f d\sigma)^\vee|^p H dx\right)^{\frac{1}{p}} \lesssim R^{\gamma+\epsilon}\|f\|_{L^2(S^{d-1}; d\sigma)}.
\end{equation}

Now take a radial Schwartz bump function $\psi$ such that $\psi(x)=1$ for all $|x|=1$, and that $\widehat{\psi}$ has compact support. Notice that $R^{\al} \cdot \mu(\frac{\cdot}{R}) \ast |\widehat{\psi}|$ is a function in $\mathcal{F}_{\al, d}$, where the dilated measure $\mu(\frac{\cdot}{R})$ is defined as
\[
\int g(x)\,d\mu(\frac{\cdot}{R}):=\int g(Rx)\,d\mu.
\]Indeed, for any $r\geq 1$, the measure of any ball of radius $r$ with respect to $R^{\al} \cdot \mu(\frac{\cdot}{R})\ast |\widehat{\psi}|$ is $\leq R^{\al} (\frac{r}{R})^{\al} = r^{\al}$. Moreover, $R^{\al} \cdot \mu(\frac{\cdot}{R}) \ast |\widehat{\psi}|$ has its $L^1$ norm $\lesssim R^{\al}$ times the total measure of $\mu$, which is in turn bounded by a constant times $R^{\al}$.

Apply (\ref{Lpnormextineq}) to the function $H = R^{\al} \cdot \mu(\frac{\cdot}{R}) \ast |\hat{\psi}|$ and use H\"older's inequality, we obtain
\begin{equation}
\int |(f d\sigma)^\vee| H dx \lesssim R^{\gamma+\frac{\al}{p'}+\epsilon}\|f\|_{L^2(S^{d-1}; d\sigma)}
\end{equation}
or
\begin{equation}
\int |(f d\sigma)^\vee| (R^{\al} \cdot \mu(\frac{\cdot}{R}) \ast |\widehat{\psi}|) dx \lesssim R^{\gamma+\frac{\al}{p'}+\epsilon}\|f\|_{L^2(S^{d-1}; d\sigma)}.
\end{equation}

Since $\psi=1$ on the unit sphere, $(f d\sigma)^\vee * \widehat{\psi} = (f d\sigma)^\vee$. Hence,
\begin{equation}\label{L1normextineq}
\int |(f d\sigma)^\vee| d \mu(\frac{\cdot}{R}) \lesssim R^{\gamma-\frac{\al}{p}+\epsilon}\|f\|_{L^2(S^{d-1}; d\sigma)}.
\end{equation}

Note that the measure $d \mu(\frac{\cdot}{R})$ has Fourier transform $\widehat{\mu} (R\cdot)$. By duality and Matilla's Theorem \ref{Mthm}, Falconer's conjecture holds for $\alpha$ as long as $\gamma-\frac{\al}{p} \leq \frac{\al -d}{2}$ (we removed the $\epsilon$ here because when $\dim(E) > \alpha$, it is also $>$ some $\al + \epsilon$). This is equivalent to $\gamma \leq \alpha (\frac{1}{p}+\frac{1}{2})-\frac{d}{2}$, as claimed in (\ref{cri:alpha}).
\end{proof}

It is now clear, due to Proposition \ref{wtRestoFalc}, that Theorem \ref{Falc} follows directly from Theorem \ref{wtRes-3} and \ref{wtRes-d}.

\begin{remark}
As noted in Wolff \cite{W99} and Erdo\~gan \cite{Erdg05}, Mattila's approach described above cannot be used to prove the full Falconer's conjecture in dimension 2 or 3. The best possible exponents it would imply are $\frac{4}{3}$ and $\frac{5}{3}$, respectively. However, one might be able to prove Falconer's conjecture in dimension $d \geq 4$ using this method.
\end{remark}

\begin{remark}\label{rmk:wtRestoAD}
From the proof of the proposition above, one concludes directly that under the assumption of Proposition \ref{wtRestoFalc} except for $(\ref{cri:alpha})$, there holds the lower bound estimate for the Fourier decay rates of fractal measures
\begin{equation}
\beta_d(\alpha)\geq 2(\frac{\alpha}{p}-\gamma)\,,
\end{equation}
where $\beta_d(\alpha)$ is as defined in (\ref{eq:AvrDec}). From this we see that Theorem \ref{AvrDec} follows from Theorem \ref{AvrDec-wtRes} and (\ref{AvrDec0}) below.
\end{remark}

\subsection{Proof of Remark \ref{unweighted}}\label{sec:justifyRmk}
For the sake of completeness, we give a justification of Remark \ref{unweighted} in this subsection.

Let $\psi$ be a Schwartz bump function such that $\psi=1$ on the unit ball $B^d$, hence $F=F\ast \widecheck{\psi}$ as ${\rm supp }\widehat{F}\subset B^d$. Therefore for all $p\geq 1$, by H\"older's inequality,
\[
\begin{split}
\int |F|^pH\,dx &=\int \left|\int F(y)\widecheck{\psi}(x-y)\,dy \right|^p H(x)\,dx\\
&\leq \int \left(\int |F(y)|^p |\widecheck{\psi}(x-y)|\,dy\right)\left(\int|\widecheck{\psi}(x-y)|\,dy\right)^{p-1} H(x)\,dx\\
&\lesssim \int |F(y)|^p \left(\int |\widecheck{\psi}(x-y)|H(x)\,dx\right)\,dy.
\end{split}
\]Observe that for any $y\in\mathbb{R}^d$ and sufficiently large $M=M(\alpha)>0$,
\[
\begin{split}
\int |\widecheck{\psi}(x-y)|H(x)\,dx&\leq C_M\sum_{j=0}^\infty \int \chi_{B(y,2^j)}(x)2^{-jM}H(x)\,dx\\
&\lesssim C_M\sum_{j=0}^\infty 2^{j(\alpha-M)}<\infty,
\end{split}
\]where we have used the fact that $H\in\mathcal{F}_{\alpha,d}$. Hence the desired estimate follows.

\subsection{Wave packet decomposition} \label{wpd}

We use the same setup as in Section 3 of \cite{lG16}, which we briefly recall here. Let $f$ be a function on $B^{d-1}$, we break it up into pieces $f_{\theta,\nu}$ that are essentially localized in both position and frequency. Cover $B^{d-1}$ by finitely overlapping balls $\theta$ of radius $R^{-1/2}$ and cover $\ZR^{d-1}$ by finitely overlapping balls of radius $R^{\frac{1+\delta}{2}}$, centered at $\nu \in R^{\frac{1+\delta}{2}}\ZZ^{d-1}$. Using partition of unity, we have a decomposition
$$
f=\sum_{(\theta,\nu)\in \ZT} f_{\theta,\nu} + {\rm RapDec}(R)\|f\|_{L^2}\,,
$$
where $f_{\theta,\nu}$ is supported in $\theta$ and has Fourier transform essentially supported in a ball of radius $R^{1/2+\delta}$ around $\nu$. The functions $f_{\theta,\nu}$ are approximately orthogonal. In other words, for any set $\ZT'\subset \ZT$ of pairs $(\theta,\nu)$, we have
$$
\big\|\sum_{(\theta,\nu)\in \ZT'} f_{\theta,\nu}\big\|_{L^2}^2
\sim \sum_{(\theta,\nu)\in \ZT'} \|f_{\theta,\nu}\|_{L^2}^2\,.
$$
For each pair $(\theta,\nu)$, the restriction of $Ef_{\theta,\nu}$ to $B_R$ is essentially supported on a tube $T_{\theta,\nu}$ with radius $R^{1/2+\delta}$ and length $R$, with direction $G(\theta)\in S^{d-1}$ determined by $\theta$ and location determined by $\nu$, more precisely,
$$
T_{\theta,\nu} :=\left\{(x',x_d) \in B_R : |x'+2x_d\omega_\theta -\nu|\leq R^{1/2+\delta}\right\}\,.
$$
Here $\omega_\theta \in B^{d-1}$ is the center of $\theta$, and 
$$
G(\theta)=\frac{(-2\omega_\theta,1)}{|(-2\omega_\theta,1)|}\,.
$$

In our proof, a key concept is a wave packet being \emph{tangent} to an algebraic variety. 
We write $Z(P_1,\cdots,P_{d-m})$ for the set of common zeros of the polynomials $P_1,\cdots,P_{d-m}$. The variety $Z(P_1,\cdots,P_{d-m})$ is called a \emph{transverse complete intersection} if 
$$
\nabla P_1(x) \wedge \cdots \wedge \nabla P_{d-m}(x) \neq 0 \text{ for all } x\in Z(P_1,\cdots,P_{d-m})\,.
$$
Let $Z$ be an algebraic variety and $E$ be a positive number. For any tile $(\theta,\nu) \in\ZT$,
we say that $T_{\theta,\nu}$ is \emph{$E R^{-1/2}$-tangent} to $Z$ if
$$T_{\theta,\nu}\subset N_{E R^{1/2}}Z \cap B_R,\quad and$$
\begin{equation*}
 \text{Angle}(G(\theta),T_zZ)\leq E R^{-1/2}
\end{equation*}
for any non-singular point $z\in N_{2 E R^{1/2}} ( T_{\theta,\nu}) \cap 2B_R \cap Z$.

Let
$$
\ZT_Z (E):=\{(\theta,\nu)\in\ZT\,|\,T_{\theta,\nu} \text{ is $E R^{-1/2}$-tangent to}\, Z\}\,,
$$
and  we say that $f$ is concentrated in wave packets from $\ZT_Z(E)$ if
$$
 \sum_{(\theta,\nu)\notin \ZT_Z(E)} \|f_{\theta,\nu}\|_{L^2} \leq {\rm RapDec}(R)\|f\|_{L^2}.
$$
Since the radius of $T_{\theta, \nu}$ is $R^{1/2 + \delta}$, $R^\delta$ is the smallest interesting value of $E$.

\section{Linear and bilinear refined Strichartz estimates\\ in higher dimensions}\label{RS}
\setcounter{equation}0

One of the key ingredients in our proof is the linear and bilinear refined Strichartz estimates established in \cite{DGL}. 

\begin{theorem} [Linear refined Strichartz for $m$-variety in $d$ dimensions] \label{thm-linref}
Let $d\geq 2$ and $m$ be a dimension in the range $2 \leq m \leq d$. Let $q_m=2(m+1)/(m-1)$. Suppose that $Z=Z(P_1,\cdots,P_{d-m})$ is a transverse complete intersection where ${\rm Deg}\,P_i \leq D_Z$. Suppose that $f\in L^2(B^{d-1})$ is concentrated in wave packets from $\ZT_Z (E).$ 
Suppose that $Q_1, Q_2, ...$
are lattice  $R^{1/2}$-cubes in $B_R$, so that
$$ \| Ef \|_{L^{q_m}(Q_j)} \textrm{ is essentially constant in $j$}. $$
\noindent Suppose that these cubes are arranged in horizontal strips of the form $\ZR \times \cdots \times \ZR \times \{t_0, t_0 + R^{1/2} \}$, and that each such strip contains $\sim \sigma$ cubes $Q_j$.  Let $Y$ denote $\bigcup_j Q_j$. Then
\begin{equation}
\label{linref}\| Ef \|_{L^{q_m}(Y)} \lessapprox  E^{O(1)}\sigma^{-\frac{1}{m+1}}R^{-\frac{d-m}{2(m+1)}}  \| f \|_{L^2}.
\end{equation}
\end{theorem}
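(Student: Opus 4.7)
The plan is to reduce the estimate to an $\ell^2$-decoupling inequality for $(m-1)$-dimensional non-degenerate submanifolds of the paraboloid, exploiting the fact that wave packets $E R^{-1/2}$-tangent to the $m$-dimensional variety $Z$ have directions which, locally, cluster near an $(m-1)$-dimensional subvariety of $B^{d-1}$.

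First, I would partition $B_R$ into the $\sim R^{1/2}$ horizontal strips $S$ of thickness $R^{1/2}$. By hypothesis each strip containing some $Q_j$ meets $\sim\sigma$ of them, and $\|Ef\|_{L^{q_m}(Q_j)}$ is essentially constant across $j$; thus $\|Ef\|_{L^{q_m}(Y\cap S)}$ is essentially constant in the strip index $S$. Consequently it suffices to prove the corresponding single-strip estimate on one fixed $S$ and then sum over strips.

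Fixing a strip $S$, I would next exploit the tangency to $Z$. For each $R^{1/2}$-cube $Q\subset S$, those caps $\theta$ whose tubes $T_{\theta,\nu}$ pass through $Q$ and are $E R^{-1/2}$-tangent to $Z$ satisfy $\mathrm{Angle}(G(\theta), T_zZ)\leq E R^{-1/2}$ at a nearby nonsingular point $z\in Z\cap Q$. Via the diffeomorphism $\omega\mapsto G(\theta)$, this pins the centers $\omega_\theta$ of the contributing caps to an $O(E R^{-1/2})$-neighborhood of an $(m-1)$-dimensional algebraic subvariety of $B^{d-1}$ (varying with $Q$) of degree $\leq D_Z^{O(1)}$. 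Lifting this subvariety to the paraboloid gives an $(m-1)$-dimensional submanifold inheriting a non-vanishing second fundamental form from the paraboloid's positive curvature. I would then invoke Bourgain--Demeter $\ell^2$-decoupling at scale $R^{-1/2}$ for such $(m-1)$-dim non-degenerate submanifolds, at the critical exponent $q_m=2(m+1)/(m-1)$, to obtain
\[
\|Ef\|_{L^{q_m}(Q)} \lessapprox E^{O(1)}\Bigl(\sum_\tau \|Ef_\tau\|_{L^{q_m}(w_Q)}^2\Bigr)^{1/2},
\]
where $\tau$ ranges over scale-$R^{-1/2}$ caps along the relevant local $(m-1)$-dim direction subvariety. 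I would then bound each $\|Ef_\tau\|_{L^{q_m}(w_Q)}$ by the locally constant property and Bernstein's inequality (trading $L^{q_m}$ for $L^2$ on $Q$ with the correct volume factor) together with near-orthogonality $\sum_\tau\|f_\tau\|_{L^2}^2\lesssim\|f\|_{L^2}^2$. Summing over the $\sigma$ cubes $Q\subset Y\cap S$ and then over contributing strips recovers the $\sigma^{-1/(m+1)}$ factor and the decay $R^{-(d-m)/(2(m+1))}$ after standard exponent arithmetic, the power of $R$ reflecting the gain from $\sim R^{(m-1)/2}$ effective direction caps instead of the full $R^{(d-1)/2}$.

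The main obstacle I expect is the decoupling step: the effective $(m-1)$-dim direction subvariety varies with the position in $S$ along $Z$, so one cannot decouple against a single fixed submanifold of the paraboloid and must instead implement decoupling locally in each $R^{1/2}$-cube and glue the local bounds. Extracting the non-degeneracy required by the Bourgain--Demeter theorem from a possibly singular algebraic $Z$, carefully tracking the $E^{O(1)}R^{\epsilon}$ losses so as not to spoil the clean exponent, and treating near-horizontal tubes (which traverse several cubes within the same strip and can escape the local decoupling cover) are the delicate quantitative points.
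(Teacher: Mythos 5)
The paper does not actually prove this theorem: it cites Section 7 of \cite{DGL}, where the estimate is established for $m=2$, $d=3$ via Bourgain--Demeter $\ell^2$-decoupling for the full paraboloid combined with induction on scales, and asserts that the general case follows by the same argument with only numerological changes. Your proposal is a genuinely different, single-scale argument, and it has a gap I believe is fatal as written: you apply $\ell^2$-decoupling into caps $\tau$ of scale $R^{-1/2}$ on a spatial cube $Q$ of side $R^{1/2}$. Decoupling into $\delta$-caps is only valid on balls of radius $\delta^{-2}$; on a ball of radius $\delta^{-1}$ the curvature of the (sub)manifold at scale $\delta$ is invisible to the uncertainty principle, and only flat decoupling, with a polynomial loss in the number of caps, is available. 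This obstruction is precisely what forces the multi-scale structure of the actual proof: one decouples into caps of the intermediate scale $R^{-1/4}$ on balls of radius $R^{1/2}$, parabolically rescales each cap to a problem at scale $R^{1/2}$, and closes an induction on the radius; the per-strip density $\sigma$ then factors as a product over the scales, which is how the $\sigma^{-1/(m+1)}$ gain is produced. A single-scale count cannot recover that gain, since the hypothesis controls only the number of cubes per occupied strip, not the number of occupied strips, and $|Ef_{\theta,\nu}|^2$ need not be equidistributed along a tube.

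A secondary issue concerns your reduction to an $(m-1)$-dimensional family of directions. The tangency condition pins $G(\theta)$ to within $ER^{-1/2}$ of $T_zZ$ only at nonsingular points $z$ of $Z$ near the tube; but the tangent plane of a degree-$D_Z$ variety can turn by $O(1)$ across a ball of radius $ER^{1/2}$, and $Z$ may present several sheets near a given cube $Q$, so the caps contributing to a fixed $Q$ need not cluster in an $O(ER^{-1/2})$-neighborhood of a single $(m-1)$-plane, and the induced direction set need not inherit the nondegeneracy required by lower-dimensional decoupling. In the argument the paper relies on, the tangency is exploited differently: it confines $Y$ (and the cube families arising at every stage of the induction) to $N_{ER^{1/2}}Z\cap B_R$, whose volume is $\lesssim E^{O(1)}D_Z^{O(1)}R^{(d+m)/2}$, and it is this volume bound that produces the factor $R^{-\frac{d-m}{2(m+1)}}$. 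If you wish to complete a proof along the lines intended, the efficient route is to adapt the two-scale decouple-and-rescale induction of \cite{DGL} rather than to seek a one-step decoupling on $R^{1/2}$-cubes.
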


\begin{theorem} [Bilinear refined Strichartz for $m$-variety in $d$ dimensions] \label{thm-bilref-m} 
Let $d\geq 2$ and $m$ be a dimension in the range $2 \leq m \leq d$. Let $q_m=2(m+1)/(m-1)$. For functions $f_1$ and $f_2$ in $L^2(B^{d-1})$, with supports separated by $\sim 1$, suppose that $f_1$ and $f_2$ are concentrated in wave packets from $\ZT_Z(E)$, where $Z=Z(P_1,\cdots,P_{d-m})$  is a transverse complete intersection with ${\rm Deg}\,P_i \leq D_Z$.
Suppose that $Q_1, Q_2, \cdots, Q_N$ are lattice  $R^{1/2}$-cubes in $B_R$, so that for each $i$,
$$ \| E f_i \|_{L^{q_m}(Q_j)} \textrm{ is essentially constant in $j$}. $$
Let $Y$ denote $\bigcup_{j=1}^{N} Q_j$. Then
\begin{equation} \label{bilref-m}
\left\| |E f_1|^{1/2}| Ef_2 |^{1/2} \right\|_{L^{q_m}(Y)} \lessapprox E^{O(1)} R^{-\frac{d-m}{2(m+1)}} N^{-\frac{1}{2(m+1)}} \| f_1 \|_{L^2}^{1/2}\| f_2 \|_{L^2}^{1/2}.
\end{equation}
\end{theorem}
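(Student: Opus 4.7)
The plan is to combine the classical bilinear extension estimate on the $(m-1)$-dimensional paraboloid in $\mathbb R^m$ with the wave-packet tangency structure and the essentially-constant hypothesis on the $R^{1/2}$-cubes. This closely parallels the proof of Theorem \ref{thm-linref}, but replaces the Bourgain--Demeter $\ell^2$-decoupling step by a bilinear restriction estimate (e.g. Tao's), trading the horizontal-strip gain $\sigma^{-1/(m+1)}$ for the weaker but unconditional factor $N^{-1/(2(m+1))}$ coming from direction transversality.

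First, decompose $f_1, f_2$ into wave packets as in Subsection \ref{wpd}. For each $R^{1/2}$-cube $Q_j$, pick a reference point $z_j\in Z\cap \mathcal N_{ER^{1/2}}(Q_j)$; by $ER^{-1/2}$-tangency to $Z$ and the $C^1$-approximation of $Z$ by $T_{z_j}Z$ at scale $R^{1/2}$ (available because $Z$ is a transverse complete intersection of bounded degree $D_Z\lessapprox 1$), every wave packet $T_{\theta,\nu}$ meeting $Q_j$ has direction $G(\theta)$ within $O(ER^{-1/2})$ of $T_{z_j}Z$. Let $f_i^{(j)}$ denote the aggregate of such wave packets of $f_i$. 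After a parabolic rescaling that sends $Q_j$ to the unit ball and identifies $T_{z_j}Z\oplus\mathbb R$ with $\mathbb R^{m+1}$, the pieces $f_1^{(j)}, f_2^{(j)}$ correspond to data on the $(m-1)$-dimensional paraboloid in $\mathbb R^m$ with supports still separated by $\sim 1$. Applying the $L^2\times L^2\to L^{q_m/2}$ bilinear extension estimate on this paraboloid (valid at the exponent $q_m/2=(m+1)/(m-1)$ under transversality, as a consequence of the $L^2$-based bilinear Stein--Tomas inequality or, more generally, Tao's bilinear restriction theorem), and then undoing the rescaling while accounting for the $R^{1/2}$-thickness in each of the $(d-m)$ transverse directions, yields on every $Q_j$ the cubewise estimate
\[
\bigl\||Ef_1^{(j)}|^{1/2}|Ef_2^{(j)}|^{1/2}\bigr\|_{L^{q_m}(Q_j)}\lessapprox E^{O(1)}\,R^{-\frac{d-m}{2(m+1)}}\,\bigl\|f_1^{(j)}\bigr\|_{L^2}^{1/2}\bigl\|f_2^{(j)}\bigr\|_{L^2}^{1/2}.
\]
Raising to the $q_m$-th power and summing over $j$, the essentially-constant assumption on $\|Ef_i\|_{L^{q_m}(Q_j)}$ propagates via dyadic pigeonholing to approximate constancy of $\|f_i^{(j)}\|_{L^2}$, while approximate $L^2$ orthogonality of the wave packets bounds $\sum_j \|f_i^{(j)}\|_{L^2}^2 \lessapprox \|f_i\|_{L^2}^2$ (up to an acceptable $R^\varepsilon$ loss from wave-packet overlaps). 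Together these give $\|f_i^{(j)}\|_{L^2}\lessapprox N^{-1/2}\|f_i\|_{L^2}$, and extracting the $q_m$-th root produces the claimed bound \eqref{bilref-m} with the factor $N^{-1/(2(m+1))}$.

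The principal obstacle is the cube-by-cube reduction: one must rigorously identify the rescaled problem on $Q_j$ with a bilinear extension problem on the flat $m$-dimensional model, and quantitatively control the error from approximating $Z$ by $T_{z_j}Z$ at scale $R^{1/2}$; the $E^{O(1)}$ factor is designed precisely to absorb these geometric distortions, and one must verify that the $\sim 1$ separation of supports survives the rescaling so that the hypothesis of the bilinear theorem is met. A secondary subtlety arises in the summation step because a single wave packet of length $R$ and radius $R^{1/2+\delta}$ intersects up to $\sim R^{1/2}$ of the $R^{1/2}$-cubes $Q_j$, so na\"ive $L^2$ orthogonality does not hold verbatim and additional pigeonholing on either the multiplicity or $\|f_i^{(j)}\|_{L^2}$ is required to close the argument at the stated exponent.
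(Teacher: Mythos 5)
Your proposal does not follow the paper's route: the paper gives no self-contained proof of Theorem \ref{thm-bilref-m} but defers to Section 7 of \cite{DGL}, where the estimate is proved by Bourgain--Demeter $l^2$-decoupling combined with induction on scales (decoupling into $R^{-1/4}$-caps on $R^{1/2}$-balls, parabolic rescaling, and iterating), the gain $N^{-\frac{1}{2(m+1)}}$ accumulating a little at each scale of the induction. Your single-scale, cube-by-cube argument cannot reproduce this gain, and the obstruction is exactly the issue you dismiss as a ``secondary subtlety'': a wave packet meets up to $\sim R^{1/2}$ of the cubes $Q_j$, so the best available replacement for orthogonality is $\sum_j\|f_i^{(j)}\|_{L^2}^2\lesssim R^{1/2}\|f_i\|_{L^2}^2$, hence (after pigeonholing to comparable pieces) only $\|f_i^{(j)}\|_{L^2}\lesssim R^{1/4}N^{-1/2}\|f_i\|_{L^2}$. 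Feeding this into your summation gives a final factor $N^{-\frac{1}{m+1}}R^{1/4}$ in place of $N^{-\frac{1}{2(m+1)}}$, and since $N\lesssim E^{O(1)}R^{m/2}$ one always has $N^{-\frac{1}{m+1}}R^{1/4}\geq R^{\frac{1}{4(m+1)}}N^{-\frac{1}{2(m+1)}}E^{O(1)}$: the argument falls short of the theorem by a positive power of $R$ for every admissible $N$. No further pigeonholing can repair this, because the loss occurs already in completely uniform configurations.

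Concretely, take $d=m=2$, $q_2=6$, and let $f_1,f_2$ be normalized sums of $R^{1/2}$ parallel wave packets whose tubes tile $B_R$, the two families transversal. Then $N=R$, each cube meets one tube from each family, $\|f_i^{(j)}\|_{L^2}=R^{-1/4}$, and the left side of \eqref{bilref-m} is genuinely $\sim R^{-1/6}=N^{-1/6}$, so the theorem is sharp here. Your scheme yields at best $\bigl(\sum_j\|f_1^{(j)}\|_{L^2}^{3}\|f_2^{(j)}\|_{L^2}^{3}\bigr)^{1/6}=\bigl(R\cdot R^{-3/2}\bigr)^{1/6}=R^{-1/12}$, off by $R^{1/12}$. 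Nor can the cubewise constant be improved to compensate: the ``bush'' example, in which $f_i^{(j)}$ consists of $\sim R^{1/2}$ wave packets through a common point of $Q_j$, saturates $\bigl\||Ef_1^{(j)}|^{1/2}|Ef_2^{(j)}|^{1/2}\bigr\|_{L^{q_m}(Q_j)}\sim\|f_1^{(j)}\|_{L^2}^{1/2}\|f_2^{(j)}\|_{L^2}^{1/2}$, so your cubewise inequality is already essentially optimal. The gain in $N$ is therefore an intrinsically multi-scale phenomenon --- it distinguishes tilings from bushes only by comparing information across scales --- and any correct proof must incorporate the induction on scales (or an equivalent mechanism) as in \cite{DGL}; replacing the decoupling step by Tao's bilinear restriction theorem applied once at scale $R^{1/2}$ loses this entirely.
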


Theorem \ref{thm-linref} and Theorem \ref{thm-bilref-m}
were proved in \cite{DGL} in the case $m=2,d=3$, via the Bourgain-Demeter $l^2$-decoupling theorem \cite{BD} and induction on scales. The proof for general $m$ and $d$ follows from exactly the same lines, with only changes in numerology, thus we skip the proof and refer interested readers to Section 7 in \cite{DGL}.

The following weighted linear and bilinear restriction estimates are immediate consequences of Theorem \ref{thm-linref} and Theorem \ref{thm-bilref-m}.

\begin{corollary} [Linear weighted $L^2$ estimate] \label{cor:linL2}
Let $d\geq 2$ and $\al\in (0,d]$. Let $m$ be a dimension in the range $2\leq m \leq d$. Suppose that $Z=Z(P_1,\cdots,P_{d-m})$ is a transverse complete intersection where ${\rm Deg}\,P_i \leq D_Z$, and that $f\in L^2(B^{d-1})$ is concentrated in wave packets from $\ZT_Z(E)$ and $H\in \cf_{\al,d}$. 
Then
\begin{equation} \label{linL2}
\|Ef\|_{L^2(B_R;Hdx)} \lessapprox E^{O(1)} R^{\frac 1 2 -\frac{d-\al}{2(m+1)}}\|f\|_{L^2}\,.
\end{equation}
\end{corollary}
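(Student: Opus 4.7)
The approach is to derive Corollary \ref{cor:linL2} directly from the linear refined Strichartz estimate (Theorem \ref{thm-linref}) applied at the high exponent $q_m = 2(m+1)/(m-1) > 2$, and then to descend from $L^{q_m}$ to $L^2(H\,dx)$ by Hölder's inequality on each $R^{1/2}$-cube in $B_R$. The Hölder factor picked up on each cube is exactly where the $\alpha$-dimensional property of $H$ enters.

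The first preparatory step is to replace $H$ by a pointwise bounded weight. Since $Ef$ has Fourier support in a fixed compact subset of $\ZR^d$, $|Ef|^2$ has Fourier support in some fixed ball. Choose a Schwartz $\phi$ with $\widehat\phi\equiv 1$ there; then $\int |Ef|^2 H\,dx = \int |Ef|^2 (H\ast\phi)\,dx$. The same argument as in Subsection \ref{sec:justifyRmk} shows $\|H\ast\phi\|_\infty \lesssim_\al 1$, and $H\ast\phi$ stays in $\cf_{\al,d}$ up to an absolute constant. Hence we may assume $\|H\|_\infty\lesssim 1$ throughout.

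Next, partition $B_R$ into lattice $R^{1/2}$-cubes $\{Q_j\}$. Standard dyadic pigeonholing, losing only a $(\log R)^{O(1)}$ factor, produces a subcollection $\mathcal{J}$ on which $\|Ef\|_{L^{q_m}(Q_j)}\sim A$ is essentially constant, and such that the cubes of $\mathcal{J}$ are arranged in $N$ horizontal strips $\ZR^{d-1}\times[t,t+R^{1/2}]$ with $\sim\sigma$ cubes in each occupied strip, in the form required by Theorem \ref{thm-linref}. In particular $N\lesssim R^{1/2}$ and $|\mathcal{J}| = N\sigma$. On each cube, Hölder with conjugate exponents $q_m/2$ and $r:=(m+1)/2$ gives
\begin{equation*}
\int_{Q_j}|Ef|^2H\,dx \le \|Ef\|_{L^{q_m}(Q_j)}^2\,\|H\|_{L^r(Q_j)} \lesssim A^2\cdot R^{\al/(m+1)},
\end{equation*}
where the last inequality uses $\|H\|_{L^r(Q_j)}^r\le\|H\|_\infty^{r-1}\int_{Q_j}H \lesssim R^{\al/2}$ (valid since $R^{1/2}\ge 1$). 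Summing over $j\in\mathcal{J}$ and invoking Theorem \ref{thm-linref} with $Y=\bigcup_{j\in\mathcal{J}}Q_j$ yields
\begin{equation*}
|\mathcal{J}|A^{q_m} \lessapprox E^{O(1)}\sigma^{-q_m/(m+1)}R^{-(d-m)q_m/(2(m+1))}\|f\|_{L^2}^{q_m}.
\end{equation*}
Writing $|\mathcal{J}|A^2 = (|\mathcal{J}|A^{q_m})^{2/q_m}|\mathcal{J}|^{(q_m-2)/q_m}$, using $(q_m-2)/q_m = 2/(m+1)$ and $|\mathcal{J}|=N\sigma$, the $\sigma$-factor cancels, leaving $|\mathcal{J}|A^2 \lessapprox E^{O(1)}N^{2/(m+1)}R^{-(d-m)/(m+1)}\|f\|_{L^2}^2 \lesssim E^{O(1)}R^{(m+1-d)/(m+1)}\|f\|_{L^2}^2$. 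Multiplying by the Hölder factor $R^{\al/(m+1)}$ gives the required $R^{1-(d-\al)/(m+1)}\|f\|_{L^2}^2$.

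The main obstacle is that $H$ itself has no a priori $L^\infty$ bound, so a direct Hölder on each cube is unavailable; the Fourier-localization smoothing in the preparatory step handles this cleanly. Beyond that, the argument is a bookkeeping exercise, with the fortunate feature that the strip-count parameter $\sigma$ drops out upon interpolation, so that only the trivial bound $N\lesssim R^{1/2}$ on the number of strips is needed.
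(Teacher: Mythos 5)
Your proof is correct and follows essentially the same route as the paper's: dyadic pigeonholing onto $R^{1/2}$-cubes with essentially constant $L^{q_m}$ norm arranged in horizontal strips, an application of Theorem \ref{thm-linref}, and Hölder against the weight using $\int_{Q_j}H\lesssim R^{\al/2}$; your bookkeeping with the strip count $N$ is equivalent to the paper's use of the cube count $N\lesssim\sigma R^{1/2}$. Your explicit preliminary reduction to $\|H\|_\infty\lesssim 1$ is a nice touch, since the paper's Hölder step writes $(\int_Y H)^{1/(m+1)}$ where the true conjugate-exponent factor is $(\int_Y H^{(m+1)/2})^{1/(m+1)}$, and identifying the two requires exactly that boundedness.
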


\begin{proof} 
Without loss of generality, assume that $\|f\|_{L^2} = 1$. We break $B_R$ into $R^{1/2}$-cubes $Q_j$. 
Let $\cy_{\ga,\si}$ denote the collection of those $Q_j$'s such that 
\begin{align*}
\bullet \,&\|Ef\|_{L^{q_m}(Q_j)} \sim \ga, \\
\bullet \,&\text{the horizontal $R^{1/2}$-strip containing $Q_j$ contains $\sim \si$ $R^{1/2}$-cubes satisfying} \\ 
&\text{the above condition}.
\end{align*}
Define $Y _{\ga,\si}:= \bigcup_{Q_j \in \cy_{\ga,\si}} Q_j$. Note that there are only $\sim (\log R)^2$ relevant dyadic scales $(\ga,\si)$, hence
$$
\|Ef\|_{L^2(B_R;Hdx)}
\lesssim (\log R)^2
\|Ef\|_{L^2(Y;Hdx)}\,,
$$
where $Y=Y_{\ga,\si}$ for some $(\ga,\si)$. Therefore by H\"older's inequality and Theorem \ref{thm-linref} one has
$$
\|Ef\|_{L^2(B_R;Hdx)}
\lessapprox
\|Ef\|_{L^{2(m+1)/(m-1)}(Y)} \left(\int_Y H\,dx\right)^{1/(m+1)}
$$
$$
\lessapprox E^{O(1)}\sigma^{-1/(m+1)}R^{-(d-m)/2(m+1)}  \| f \|_{L^2}
\left(NR^{\al/2}\right)^{1/(m+1)}\,,
$$
where $N$ is the number of $R^{1/2}$-cubes in $Y$.
Note that $N\lesssim \sigma R^{1/2}$, the above is thus further bounded by
$$
\lessapprox E^{O(1)} R^{\frac 1 2 -\frac{d-\al}{2(m+1)}},
$$
as desired.
\end{proof}

\begin{corollary}[Bilinear weighted $L^{r_m}$ estimate] \label{cor:bilLp}
Let $d\geq 2$ and $\al\in (0,d]$. Let $m$ be a dimension in the range $2\leq m \leq d$. For functions $f_1$ and $f_2$ in $L^2(B^{d-1})$, with supports separated by $\sim 1$, suppose that $f_1$ and $f_2$ are concentrated in wave packets from $\ZT_Z(E)$, where $Z=Z(P_1,\cdots,P_{d-m})$  is a transverse complete intersection with ${\rm Deg}\,P_i \leq D_Z$. Let $r_m:=\frac{2(m+1)}{m}$ and $H\in \cf_{\al,d}$. Then,
\begin{equation} \label{bilLp}
\left\| |E f_1|^{1/2}| Ef_2 |^{1/2} \right\|_{L^{r_m}(B_R;Hdx)} \lessapprox E^{O(1)} R^{-\frac{2d-2m-\al}{4(m+1)} }\| f_1 \|_{L^2}^{1/2}\| f_2 \|_{L^2}^{1/2}.
\end{equation}
\end{corollary}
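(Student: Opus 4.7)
The plan is to run the bilinear analogue of the proof of Corollary \ref{cor:linL2}, using Theorem \ref{thm-bilref-m} in place of Theorem \ref{thm-linref}.

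First I would dispose of a minor technical nuisance by reducing to the case $\|H\|_{L^\infty}\lesssim 1$: replace $H$ by $H\ast\phi$ for a nonnegative unit-scale Schwartz bump $\phi$ with $\int\phi=1$. The convolution still lies in $\mathcal{F}_{\alpha,d}$ up to absolute constants, is pointwise $\lesssim 1$ (by applying the fractal bound with $r=1$), and dominates $H$. Next, partition $B_R$ into lattice $R^{1/2}$-cubes $\{Q_j\}$ and dyadically pigeonhole in both $\|Ef_1\|_{L^{q_m}(Q_j)}$ and $\|Ef_2\|_{L^{q_m}(Q_j)}$; at the acceptable cost of $(\log R)^2$, this produces a subcollection $\{Q_j\}_{j=1}^N$ on which both $L^{q_m}$ norms are essentially constant in $j$, so that the hypotheses of Theorem \ref{thm-bilref-m} are met. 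Let $Y:=\bigcup_j Q_j$.

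The heart of the argument is then H\"older's inequality. Writing $g:=|Ef_1|^{1/2}|Ef_2|^{1/2}$ and applying H\"older on $Y$ with conjugate exponents $q_m/r_m=m/(m-1)$ and $m$ gives
\begin{equation*}
\int_Y g^{r_m}\,H\,dx \;\leq\; \Big(\int_Y g^{q_m}\,dx\Big)^{(m-1)/m}\Big(\int_Y H^m\,dx\Big)^{1/m}.
\end{equation*}
The bound $\|H\|_\infty\lesssim 1$ yields $\int_Y H^m\lesssim \int_Y H$, and the fractal condition applied cube-by-cube (with $r=R^{1/2}$) followed by summation gives $\int_Y H\lesssim NR^{\alpha/2}$. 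Combined with the bilinear refined Strichartz bound $\|g\|_{L^{q_m}(Y)}\lessapprox E^{O(1)}R^{-(d-m)/(2(m+1))}N^{-1/(2(m+1))}\|f_1\|_2^{1/2}\|f_2\|_2^{1/2}$ from Theorem \ref{thm-bilref-m}, the factors of $N$ cancel exactly, and the remaining power of $R$ works out to $-(2d-2m-\alpha)/(4(m+1))$, as claimed in \eqref{bilLp}.

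The one real subtlety is the reduction to $\|H\|_\infty\lesssim 1$; without it the H\"older step would demand an $L^m$ bound on $H$ not directly encoded in the definition of $\mathcal{F}_{\alpha,d}$. Once that reduction is in hand, the argument is a mechanical transcription of Corollary \ref{cor:linL2}, and is in fact slightly cleaner since Theorem \ref{thm-bilref-m} does not require the horizontal-strip hypothesis that appeared in Theorem \ref{thm-linref}.
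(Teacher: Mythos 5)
Your argument is essentially the paper's own proof: dyadic pigeonholing of the $R^{1/2}$-cubes in both $\|Ef_1\|_{L^{q_m}(Q_j)}$ and $\|Ef_2\|_{L^{q_m}(Q_j)}$ at a $(\log R)^2$ cost, H\"older to split off a factor $\big(\int_Y H\,dx\big)^{1/(2(m+1))}$, the bound $\int_Y H\lesssim NR^{\al/2}$, and Theorem \ref{thm-bilref-m}, with the $N$'s cancelling to give the exponent $-\frac{2d-2m-\al}{4(m+1)}$; the paper simply writes the H\"older step with $\int_Y H$ directly, implicitly treating $H$ as bounded, which your mollification makes explicit. The only caveat is that $H\ast\phi$ does not dominate $H$ pointwise; what one actually needs is $\int g^{r_m}H\,dx\lesssim \int g^{r_m}(H\ast\phi)\,dx$, which follows from the locally constant property of $Ef_1, Ef_2$ exactly as in the proof of Remark \ref{unweighted}, so the reduction is sound once justified that way.
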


\begin{proof}
Without loss of generality, assume that $\|f_1\|_{L^2}=\|f_2\|_{L^2}=1$.
We break $B_R$ into $R^{1/2}$-cubes $Q_j$. Let $\cy_{\ga_1,\ga_2}$ denote the collection of those $Q_j$'s such that 
$$
\|E f_i\|_{L^{q_m}(Q_j)} \sim \ga_i, \quad i=1,2\,.
$$
Define $Y _{\ga_1,\ga_2}:= \bigcup_{Q_j \in \cy_{\ga_1,\ga_2}} Q_j$. Note that there are only $\sim (\log R)^2$ relevant dyadic scales $(\ga_1,\ga_2)$, hence
$$
\left\| |E f_1|^{1/2}| Ef_2 |^{1/2} \right\|_{L^{r_m}(B_R;Hdx)}
\lesssim (\log R)^2
\left\| |E f_1|^{1/2}| Ef_2 |^{1/2} \right\|_{L^{r_m}(Y;Hdx)}\,,
$$
where $Y=Y_{\ga_1,\ga_2}:=\bigcup_{j=1}^{N} Q_j$ for some $(\ga_1,\ga_2)$. Therefore by H\"older's inequality and Theorem \ref{thm-bilref-m} one obtains
$$
\left\| |E f_1|^{1/2}| Ef_2 |^{1/2} \right\|_{L^{r_m}(B_R;Hdx)}
\lessapprox
\left\| |E f_1|^{1/2}| Ef_2 |^{1/2} \right\|_{L^{q_m}(Y)} \left(\int_Y H\,dx\right)^{\frac{1}{2(m+1)}}
$$
$$
\lessapprox E^{O(1)} R^{-\frac{d-m}{2(m+1)}} N^{-\frac{1}{2(m+1)}} \| f_1 \|_{L^2}^{1/2}\| f_2 \|_{L^2}^{1/2} \left(NR^{\frac \al 2}\right)^{\frac{1}{2(m+1)}}
$$
$$
= E^{O(1)} R^{-\frac{2d-2m-\al}{4(m+1)} } \,,
$$
as desired.
\end{proof}

\begin{remark} \label{rmk:falc0}
The linear weighted $L^2$ estimate in Corollary \ref{cor:linL2} in the case $m=d$ says that for all $d\geq 2$, $\al\in(0,d], f\in L^2(B^{d-1}), H\in \cf_{\al,d}$, 
\begin{equation}
\|Ef\|_{L^2(B_R;Hdx)} \lessapprox R^{\frac 1 2 -\frac{d-\al}{2(d+1)}}\|f\|_{L^2}\,.
\end{equation}
Therefore it follows from Proposition \ref{wtRestoFalc} that for a compact subset $E$ of $\ZR^d$,
\begin{equation} \label{falc0}
{\rm dim}(E)> \frac d 2 + \frac 1 4 + \frac{3}{8d+4} \Rightarrow |\Delta(E)|>0.
\end{equation}
This already improves Erdo\~gan's result \cite{Erdg05} for $d > 4$. In addition, by Remark \ref{rmk:wtRestoAD} one obtains
\begin{equation} \label{AvrDec0}
\be_d(\al) \geq \al-1+\frac{d-\al}{d+1}\,.
\end{equation}
This improves Erdo\~gan's result \cite{Erdg05} in the range $\al> \frac d 2 + \frac{1}{d-1}$ and Luc\`a-Rogers' result \cite{LR} in the full range. 
\end{remark}

The results \eqref{falc0} on Falconer's problem and \eqref{AvrDec0} on average decay rates in the above can be further improved to Theorem \ref{AvrDec-wtRes}, by combining refined Strichartz estimates and the method of polynomial partitioning developed by the second author \cite{lG, lG16}. That will be the content of the rest of the paper.

\section{Weighted extension estimates in three dimensions:\\ proof of Theorem 
\ref{AvrDec-wtRes} for $d=3$}
\label{sec:wtRes-3}
\setcounter{equation}0

In this section, we prove the case $d=3$ of Theorem \ref{AvrDec-wtRes} using polynomial partitioning and bilinear refined Strichartz estimates, building on the work of \cite{lG, DGL}.

For any $\al\in(0,3]$ and $p>3$, we will prove that
\begin{equation} \label{eq:wtRes-3'}
\|Ef\|_{L^p(B_R;Hdx)} \lessapprox R^{\gamma^0_3(\alpha)}\|f\|_{L^2}
\end{equation}
holds for all $f\in L^2(B^2)$, all $R> 1$ and all $H\in \cf_{\al,3}$, where
\[
\gamma^0_{3}(\alpha)=\begin{cases}
0, & \alpha\in (0,2],\\
\frac{\alpha}{3}-\frac{2}{3}, & \alpha\in (2,3],
\end{cases}
\]

The weighted $L^3$ estimate in Theorem \ref{AvrDec-wtRes} follows from H\"older's inequality and the estimate \eqref{eq:wtRes-3'} by taking $p\rightarrow 3^+$. Note that one can assume $R$ is sufficiently large, as otherwise the bound \eqref{eq:wtRes-3'} becomes trivial. The proof uses induction on the physical radius $R$. 

\subsection{Polynomial partitioning and cell contributions}
We pick a degree $D=R^{\delta_{\rm deg}}$, where $\delta_{\rm deg}\ll\delta\e$. By the polynomial partitioning theorem (c.f. Theorem 1.4 in \cite{lG}), there exists a non-zero polynomial
$P$ of degree at most $D$ such that $\ZR^3\backslash Z(P)$ is a union of $\sim D^{3}$
disjoint open sets $O_i$ and for each $i$ there holds
\begin{equation} 
  \|Ef\|^p_{L^p(B_R;Hdx)} \sim D^3 \|Ef\|^p_{L^p(B_R\cap O_i;Hdx)}.
\end{equation}
Moreover, the polynomial $P$ is a product of distinct non-singular polynomials.

Define the \emph{wall}
\begin{equation}
  W:=N_{R^{1/2+\delta}}Z(P)\cap B_R\,,
\end{equation}
where $\delta\ll\e$ and  $N_{R^{1/2+\delta}}Z(P)$ stands for the $R^{1/2+\delta}$-neighborhood of
the variety $Z(P)$ in $\ZR^3$. For each cell $O_i$, set
\begin{equation}
 O'_i :=\left[O_i\cap B_R\right] \backslash W
\,\,\,{\rm and}\,\,\,
 \ZT_i:=\{(\theta,\nu)\in\ZT\,:\,T_{\theta,\nu} \cap O'_i \neq \emptyset\} \,.
\end{equation}
For each function $f$, define
\begin{equation}
  f_i :=  \sum_{(\theta,\nu)\in\ZT_i} f_{\theta,\nu} \,.
\end{equation}
Then on each cell $O_i'$,
\begin{equation}
 Ef(x)\sim Ef_i(x),\quad x\in O_i' \,.
\end{equation}
By the fundamental theorem of algebra, we have a simple geometric observation: for each $(\theta,\nu)$,
$$
\#\{i : T_{\theta,\nu}\cap O_i' \neq \emptyset \} \leq D+1\,.
$$
This geometric observation and orthogonality allow us to control the $L^2$ norms of $f_i$'s:
\begin{equation}
\sum_i \|f_i\|_{L^2}^2 \lesssim D\|f\|^2_{L^2}\,.
\end{equation}

Break $\|Ef\|^p_{L^p(B_R;Hdx)}$ into
\begin{equation*}
\sum_i  \|Ef\|^p_{L^p(O_i';Hdx)}
+  \|Ef\|^p_{L^p(W;Hdx)}\,,
\end{equation*}
and call it the \emph{algebraic} case if the wall contribution $\|Ef\|^p_{L^p(W;Hdx)}$ dominates. We first consider the \emph{non-algebraic} case, where the main contribution comes from the cells $O_i'$. In the non-algebraic case, 
\begin{equation}
\|Ef\|^p_{L^p(B_R;Hdx)} \sim D^3 \|Ef\|^p_{L^p(O_i';Hdx)}
\end{equation}
still holds for $\sim D^3$ indices $i$'s, and among which by pigeonholing one can pick an $i_0$ such that
\begin{equation}
\|f_{i_0}\|_{L^2}^2 \lesssim D^{-2}\|f\|^2_{L^2}\,. 
\end{equation}
Now the non-algebraic case can be handled by induction:
$$
\|Ef\|^p_{L^p(B_R;Hdx)} \sim D^3 \|Ef\|^p_{L^p(O'_{i_0};Hdx)} \lesssim D^3 \|Ef_{i_0}\|^p_{L^p(B_R;Hdx)}
$$
$$
\lesssim D^3 \left(R^{\e+\gamma^0_3(\alpha)} \|f_{i_0}\|_{L^2}\right)^p
\lesssim D^{3-p} \left(R^{\e+\gamma^0_3(\alpha)}\|f\|_{L^2}\right)^p\,.
$$
Recall that $D=R^{\delta_{\rm deg}}$ and $R$ is assumed to be sufficiently large compared to any constant depending on $\e$, therefore $D^{3-p}\ll 1$ provided that $p>3$, thus the induction closes.

\subsection{Wall contribution}
To deal with the wall term $\|Ef\|^p_{L^p(W;Hdx)}$, we break $B_R$ into $\sim R^{3\delta}$ balls $B_j$ of radius $R^{1-\delta}$.

For any tile $(\theta,\nu) \in\ZT$,
$T_{\theta,\nu}$ is said to be \emph{tangent} to the wall $W$ in a given ball $B_j$ if
it satisfies that $T_{\theta,\nu}\cap B_j\cap W \neq \emptyset$ and
\begin{equation}
 \text{Angle}(G(\theta),T_z[Z(P)])\leq R^{-1/2+2\delta}
\end{equation}
for any non-singular point $z\in 10T_{\theta,\nu}\cap 2B_j\cap Z(P)$.
Recall that $G(\theta)\in S^{2}$ is the direction of the tube $T_{\theta,\nu}$. Here $T_z[Z(P)]$ stands for the tangent space to the variety $Z(P)$ at the point $z$, and by a non-singular point we mean a point $z$ in $Z(P)$ with $\nabla P(z) \neq 0.$ Since $P$ is a product of distinct non-singular polynomials, the non-singular points are dense in $Z(P)$.  We note that if $T_{\theta, \nu}$ is tangent to $W$ in $B_j$, then $T_{\theta, \nu} \cap B_j$ is contained in the $R^{1/2 + \delta}$-neighborhood of $Z(P) \cap 2 B_j$.  

We say that $T_{\theta,\nu}$ is \emph{transverse}  to the wall $W$ in the ball $B_j$ if it enjoys the property
that  $T_{\theta,\nu}\cap B_j\cap W \neq \emptyset$ and
\begin{equation}
 \text{Angle}(G(\theta),T_z[Z(P)])> R^{-1/2+2\delta}
\end{equation}
for some non-singular point $z\in 10T_{\theta,\nu}\cap 2B_j\cap Z(P)$.

Let $\ZT_{j, {\rm tang}}$ represent the collection of
all tiles $(\theta,\nu)\in \ZT$ such that $T_{\theta,\nu}$'s are tangent to the wall $W$ in $B_j$,
and $\ZT_{j, {\rm trans}}$ denote the collection of
all tiles $(\theta,\nu)\in \ZT$ such that $T_{\theta,\nu}$'s are transverse to the wall $W$ in $B_j$.

Define $f_{j,{\rm tang}}:=\sum_{(\theta,\nu)\in \ZT_{j,{\rm tang}}}f_{\theta,\nu}$ and $f_{j,{\rm trans}}:=\sum_{(\theta,\nu)\in \ZT_{j,{\rm trans}}}f_{\theta,\nu}$.
Then on $B_j \cap W$, $Ef(x)$ can be split into a transverse term and a tangential term:
\begin{equation}
Ef(x)\sim Ef_{j,\rm tang}(x)+Ef_{j,\rm trans}(x)\,.
\end{equation}
However, since we will need to use a bilinear structure when analyzing the tangent contribution, here we use a more refined decomposition instead: breaking $Ef(x)$ into a linear transverse term and a bilinear tangential term.

More precisely, decompose the unit ball $B^2$ into balls $\tau$ of radius $1/K$, where $K=K(\epsilon)\ll R$ is a large parameter. Decompose $f=\sum_{\tau}f_\tau$, where $\text{supp}\,f_\tau \subseteq \tau$.

Let $B_\epsilon :=\{x\in B_R\,:\,\exists\, \tau\,\, \text{s.t.}\, |Ef_\tau(x)|>K^{-\e^4}|Ef(x)|\}$.  We will show by parabolic rescaling that
the contribution from $B_\e$ is acceptable. In fact, by the definition of $B_\e$,
\begin{equation*}
  \big\|Ef (x)\big\|^p_{L^p(B_\e;Hdx)}
  \leq K^{\epsilon^4 p}\sum_\tau \big\|Ef_\tau(x)\big\|^p_{L^p(B_R;Hdx)} \,. 
\end{equation*}
By parabolic rescaling and induction on scales (Lemma \ref{ParResc}), the right hand side is bounded by
\begin{align}
  \lesssim \, & K^{\e^4p}\sum_\tau \left[K^{\frac{\al+1}{p}-1-\e-\ga^0_3} R^{\e+\gamma^0_3(\alpha)} \|f_\tau\|_{L^2}\right]^p \notag \\
  \lesssim \,& K^{(\e^4+\frac{\al+1}{p}-1-\e-\ga^0_3)p} \left[R^{\e+\gamma^0_3(\alpha)}\|f\|_{L^2}\right]^p. \notag
\end{align}
Note that $\frac{\al+1}{p}-1-\ga^0_3 \leq 0$ (this is the reason why we set $\ga^0_3 = \frac{\al-2}{3}$ for $\al>2$). By choosing $K=K(\epsilon)$ large enough so that $$K^{(\e^4+\frac{\al+1}{p}-1-\e-\ga^0_3)} \ll 1,$$
the induction closes and therefore the term involving $B_\epsilon$ plays 
an unimportant role.

For points not in $B_\epsilon$, we have the following decomposition into a transverse term and a bilinear tangential term (cf. \cite[Lemma 6.2]{DGL}):

\begin{lemma} \label{L:wall}
For each point $x\in B_j\cap W$ satisfying $\max_{\tau} |Ef_\tau(x)|\leq K^{-\epsilon^4}|Ef(x)|$,
there exists a sub-collection $I$ of the collection of all possible $1/K$-balls $\tau$, such that
\begin{equation}
|Ef(x)| \lesssim |Ef_{I,j,\rm trans}(x)| + K^{10}
{\rm Bil}(Ef_{j, \rm tang}(x)),
\end{equation}
where
$$
f_{I,j,\rm trans}:=\sum_{\tau \in I} f_{\tau, j, \rm trans},
$$
and the bilinear tangent term is given by
$$
{\rm Bil}(Ef_{j,\rm tang}(x)):=\max_{\substack{\tau_1,\tau_2\\ \text{dist}(\tau_1,\tau_2)\geq 1/K}}
|Ef_{\tau_1,j,\rm tang}(x)|^{1/2}|Ef_{\tau_2,j,\rm tang}(x)|^{1/2}.
$$
\end{lemma}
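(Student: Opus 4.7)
The plan is a broad/narrow dichotomy adapted to the tangent/transverse wave packet decomposition. For $x\in B_j\cap W$, expand $Ef(x)=\sum_\tau Ef_\tau(x)$ and further split each summand as $Ef_\tau(x)=Ef_{\tau,j,{\rm trans}}(x)+Ef_{\tau,j,{\rm tang}}(x)$. Summing in $\tau$ yields
$$Ef(x)=G(x)+T(x),\qquad G(x):=\sum_\tau Ef_{\tau,j,{\rm trans}}(x),\quad T(x):=\sum_\tau Ef_{\tau,j,{\rm tang}}(x),$$
and the triangle inequality forces one of $|G(x)|,|T(x)|$ to be $\geq\tfrac12|Ef(x)|$. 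The first case is trivial: take $I$ to be the full collection of $1/K$-balls $\tau$, so that $Ef_{I,j,{\rm trans}}(x)=G(x)$ and $|Ef(x)|\lesssim|Ef_{I,j,{\rm trans}}(x)|$.

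Suppose instead $|T(x)|\geq\tfrac12|Ef(x)|$. Let $\tau^*$ maximize $|Ef_{\tau,j,{\rm tang}}(x)|$, fix the threshold $\eta:=|Ef(x)|/K^{10}$, and run a sub-dichotomy: either (\emph{spread case}) some $\tau_2$ with $\text{dist}(\tau_2,\tau^*)\geq 1/K$ satisfies $|Ef_{\tau_2,j,{\rm tang}}(x)|\geq\eta$, or (\emph{concentrated case}) every such $\tau_2$ has $|Ef_{\tau_2,j,{\rm tang}}(x)|<\eta$.

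In the spread case, pigeonholing over the $O(K^2)$ tangent contributions (which sum in modulus to at least $|T(x)|\geq\tfrac12|Ef(x)|$) yields $|Ef_{\tau^*,j,{\rm tang}}(x)|\gtrsim K^{-2}|Ef(x)|$, and therefore
$$\text{Bil}(Ef_{j,{\rm tang}}(x))\geq|Ef_{\tau^*,j,{\rm tang}}(x)|^{1/2}|Ef_{\tau_2,j,{\rm tang}}(x)|^{1/2}\gtrsim K^{-6}|Ef(x)|,$$
so $K^{10}\,\text{Bil}(Ef_{j,{\rm tang}}(x))\gg|Ef(x)|$. In the concentrated case, only $O(1)$ balls $\tau$ lie within $1/K$ of $\tau^*$, and the remaining $\tau$'s together contribute at most $K^2\eta=|Ef(x)|/K^8\ll|T(x)|$ in modulus. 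Hence $|Ef_{\tau^*,j,{\rm tang}}(x)|\gtrsim|Ef(x)|$, and combining this with the broad hypothesis $|Ef_{\tau^*}(x)|\leq K^{-\epsilon^4}|Ef(x)|$ and the triangle inequality,
$$|Ef_{\tau^*,j,{\rm trans}}(x)|\geq|Ef_{\tau^*,j,{\rm tang}}(x)|-|Ef_{\tau^*}(x)|\gtrsim|Ef(x)|,$$
so the choice $I=\{\tau^*\}$ gives $|Ef(x)|\lesssim|Ef_{I,j,{\rm trans}}(x)|$.

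The main conceptual obstacle is the concentrated case: the whole point of the broad hypothesis is to prevent a single $\tau^*$ from carrying all of $|Ef(x)|$ in its \emph{total} mass $|Ef_{\tau^*}(x)|$, and we must exploit this to convert a concentrated \emph{tangent} mass at $\tau^*$ into a comparable \emph{transverse} mass at the same $\tau^*$ (via forced cancellation in $Ef_{\tau^*}=Ef_{\tau^*,j,{\rm tang}}+Ef_{\tau^*,j,{\rm trans}}$). The threshold $\eta=|Ef(x)|/K^{10}$ is chosen to balance the spread and concentrated branches against the $K^{10}$ tolerance allowed by the lemma; any threshold between these two regimes would work equally well.
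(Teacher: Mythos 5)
Your proof is correct and follows essentially the same broad/narrow-plus-tangent/transverse dichotomy as the source the paper cites for this lemma (\cite[Lemma 6.2]{DGL}); the paper itself gives no proof, and your case organization (splitting off the maximizing cap $\tau^*$ and taking $I=\{\tau^*\}$ in the concentrated branch, versus taking $I$ to be the set of caps with small tangential contribution) is an inessential variant. The only points worth making explicit are that the identity $Ef_\tau(x)=Ef_{\tau,j,\rm tang}(x)+Ef_{\tau,j,\rm trans}(x)$ for $x\in B_j\cap W$ holds only up to a ${\rm RapDec}(R)\|f\|_{L^2}$ error coming from wave packets whose tubes miss $B_j\cap W$, and that your concentrated case requires $K^{-\epsilon^4}$ to be smaller than the absolute constant in $|Ef_{\tau^*,j,\rm tang}(x)|\gtrsim |Ef(x)|$, which is guaranteed because $K=K(\epsilon)$ is chosen large.
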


By Lemma \ref{L:wall} we bound the wall term $\|Ef\|^p_{L^p(W;Hdx)}$ by
\begin{align}
\lesssim & \sum_{j}\big\|\max_{I} \,|Ef_{I,j,\rm trans} (x)|\big\|^p_{L^p(B_j\cap W;Hdx)}\label{tran0}\\
  +&K^{10p} \sum_j\big\|{\rm Bil}(Ef_{j,\rm tang} (x))\big\|^p_{L^p(B_j\cap W;Hdx)} \,.\label{Bil}
\end{align}
We handle the transverse term by induction on physical radius and control the $L^2$ norms of $f_{j,\rm trans}$ using the following Lemma, which says that $T_{\theta,\nu}$ crosses the wall $W$ transversely in at most $R^{O(\delta_{\rm deg})}$
many balls $B_j$.

\begin{lemma} [Lemma 3.5 in \cite{lG}]
  For each tile $(\theta,\nu) \in \ZT$, the number of $R^{1-\delta}$-balls $B_j$ for which
 $(\theta,\nu)\in \ZT_{j,{\rm trans}}$ is at most $\text{Poly}(D)=R^{O(\delta_{\rm deg})}$.
\end{lemma}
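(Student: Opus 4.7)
The plan is to invoke Bezout's theorem after passing from the thick tube $T_{\theta,\nu}$ to its central axis $\ell = \ell_{\theta,\nu}$. First I would handle the generic situation where $\ell \not\subset Z(P)$. In that case $P|_\ell$ is a univariate polynomial of degree at most $D$, so $|\ell \cap Z(P)| \leq D$.

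Next I would show that each $B_j$ with $(\theta,\nu) \in \ZT_{j,\text{trans}}$ contributes a distinct (up to $O(1)$ multiplicity) point of $\ell \cap Z(P)$ near $B_j$. Given a witness $z_j \in 10 T_{\theta,\nu} \cap 2B_j \cap Z(P)$ with $\text{Angle}(G(\theta), T_{z_j}[Z(P)]) > R^{-1/2 + 2\delta}$, I would write $Z(P)$ locally near $z_j$ as a graph over the plane perpendicular to $G(\theta)$; this is possible because $P$ is a product of distinct non-singular polynomials, so $\nabla P(z_j) \neq 0$ and the implicit function theorem applies. The angle bound translates into a Lipschitz constant $\lesssim R^{1/2 - 2\delta}$ for this graph. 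Since $\ell$ lies within transverse distance $\lesssim R^{1/2+\delta}$ of $z_j$, continuing the graph toward $\ell$ produces a point $p_j \in \ell \cap Z(P)$ with $|p_j - z_j| \lesssim R^{1-\delta}$, placing $p_j$ in an $O(1)$-neighborhood of $B_j$. Because distinct balls $B_j$ are separated by $\gtrsim R^{1-\delta}$, the assignment $B_j \mapsto p_j$ is at most boundedly many-to-one, and the count $\lesssim D$ follows.

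The degenerate case $\ell \subset Z(P)$ is handled by factoring $P = P_1 \cdots P_s$ into distinct non-singular factors with $s \leq D$. If $\ell \subset Z(P_i)$ for some $i$, I would peel off that factor and rerun the generic argument on the product of the remaining factors (which has degree $\leq D$, and whose zero set the line $\ell$ generically misses in a contained way). This loses at most a further factor of $D$, so the total bound stays $\text{Poly}(D) = R^{O(\delta_{\text{deg}})}$.

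The main obstacle I anticipate is making the quantitative implicit-function step rigorous: one must ensure that the local graph near $z_j$ extends transversely far enough to reach $\ell$ without folding back or encountering singular points of $Z(P)$. In practice this is handled by perturbing $\ell$ slightly within the tube's cross-section and averaging over parallel lines through a small sub-disk, so that generic choices of the parallel line avoid the measure-zero singular set while staying within distance $\lesssim R^{1/2+\delta}$ of $\ell$. The quantitative gap $\delta_{\text{deg}} \ll \delta$ ensures that this perturbation absorbs cleanly and that the final count is $\text{Poly}(D)$ as claimed.
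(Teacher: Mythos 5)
First, a remark on the ground truth: the paper does not prove this lemma itself; it quotes it from \cite{lG} (Lemma 3.5 there). So the comparison is really with Guth's proof, whose high-level skeleton your proposal does share: pass to lines, invoke Bezout, and use the angle lower bound $R^{-1/2+2\delta}$ to convert the transverse scale $R^{1/2+\delta}$ of the tube into the longitudinal scale $R^{1/2+\delta}\cdot R^{1/2-2\delta}=R^{1-\delta}$ matching the balls $B_j$.

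However, your central step --- that each transverse witness $z_j$ can be continued along a local graph of $Z(P)$ to an actual point $p_j\in\ell\cap Z(P)$ with $|p_j-z_j|\lesssim R^{1-\delta}$ --- is false, and the failure is not the one you anticipate. Take $Z(P)$ to be a sphere of radius $\rho=R^{1.4}$ whose center lies at perpendicular distance $\rho+R^{1/2+\delta}$ from the core line $\ell$. Then $\ell\cap Z(P)=\emptyset$, yet the cap of points $z(\phi)$ making angle $\phi$ with the point of the sphere nearest to $\ell$ satisfies $\mathrm{dist}(z(\phi),\ell)\approx R^{1/2+\delta}+\rho\phi^2/2$ and $\text{Angle}(G(\theta),T_{z(\phi)}Z(P))=\phi$; since $\rho<R^{3/2-3\delta}$, the range $R^{-1/2+2\delta}<\phi\lesssim(R^{1/2+\delta}/\rho)^{1/2}$ is nonempty, so there are non-singular, quantitatively transverse witnesses inside $10T_{\theta,\nu}$ while $\ell$ misses $Z(P)$ entirely. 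The sphere has no singular points, so perturbing $\ell$ generically to avoid the singular set does not repair the argument; what actually breaks is the quantitative implicit function theorem, since the pointwise angle bound at $z_j$ gives no control (in terms of $D$ alone) on how far the graph representation persists before the tangent plane rotates to contain $G(\theta)$ and the surface turns back. The statement one actually needs, and the real content of Guth's Lemma 3.4, is a covering statement: the set of non-singular points of $Z(P)$ in the $r$-cylinder about $\ell$ whose tangent space makes angle $>\alpha$ with $G(\theta)$ is contained in $\text{Poly}(D)$ balls of radius $\sim r\alpha^{-1}$. Its proof takes a maximal $\sim r\alpha^{-1}$-separated family of such points and shows that a definite fraction of the lines \emph{parallel} to $\ell$ inside the cylinder must cross $Z(P)$ (detected through a sign change of $P$, not by following the surface) within $\sim r\alpha^{-1}$ of each of them, and only then applies Bezout to those lines. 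That positive-fraction crossing statement is the genuine difficulty; your proposal assumes it rather than proves it. Once it is in hand, the finite overlap of the balls $B_j$ of radius $R^{1-\delta}$ yields the bound $\text{Poly}(D)=R^{O(\delta_{\rm deg})}$ as claimed.
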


The above geometric lemma and orthogonality imply the bound:
$$
\sum_j\|f_{j,\rm trans}\|_{L^2}^2 \leq R^{O(\delta_{\rm deg})}\|f\|_{L^2}^2.
$$
We now estimate the linear transverse term (\ref{tran0}). The term (\ref{tran0}) is dominated by
\begin{equation}\label{maxI}
 \sum_{j}\sum_{I\subseteq \mathcal T}\big\|Ef_{I,j,\rm trans} (x)\big\|^p_{L^p(B_j\cap W;Hdx)}\, ,
\end{equation}
where $\mathcal T$ is the collection of all possible $1/K$-balls in $B^2$, and the sum is taken over all
subsets of $\mathcal T$. Since there are at most $2^{K^2}$ $I$'s,  we apply \eqref{eq:wtRes-3'} with radius $R^{1-\delta}$ to obtain
$$
 (\ref{maxI}) \leq \sum_{j}  2^{K^2}  \left[C_\epsilon R^{(1-\delta)(\epsilon+\ga^0_3)}\|f_{j,\rm trans}\|_{L^2}\right]^p
$$
$$
 \lesssim 2^{K^2} R^{O(\delta_{\rm deg})-\delta\epsilon p-\delta\ga^0_3 p}  \left[C_\epsilon R^{\epsilon+\ga^0_3}\|f\|_{L^2}\right]^p\,.
$$
Since $\delta_{\rm deg} \ll \delta \e$, it follows that
$2^{K^2} R^{O(\delta_{\rm deg})-\delta\epsilon p-\delta\ga^0_3 p} \ll 1$, thus the induction on the transverse term closes. \\

It remains to estimate the bilinear tangent term (\ref{Bil}). The proof uses the bilinear refined Strichartz. By Corollary \ref{cor:bilLp} in the case $m=2$ and $d=3$, we have the following: 

Let $\al\in(0,3]$. 
For functions $f_1$ and $f_2$ in $L^2(B^2)$, with supports separated by $\sim 1$, suppose that $f_1$ and $f_2$ are concentrated in wave packets from $\ZT_Z(E)$, where $Z=Z(P)$ and $P$ is a product of distinct non-singular polynomials.
Then for any $H\in \cf_{\al,3}$,
\begin{equation} \label{bilL3}
\left\| |E f_1|^{1/2}| Ef_2 |^{1/2} \right\|_{L^3(B_R;Hdx)} \lessapprox E^{O(1)}R^{(\al-2)/12}\| f_1 \|_{L^2}^{1/2}\| f_2 \|_{L^2}^{1/2}.
\end{equation}

Now we estimate the bilinear tangent term \eqref{Bil}.
$$
\eqref{Bil} \leq K^{10} \sum_j \sum_{\substack{\tau_1,\tau_2\\ \text{dist}(\tau_1,\tau_2)\geq 1/K}}
\left\| |E f_{\tau_1,j,\rm tang}|^{1/2}| Ef_{\tau_2,j,\rm tang} |^{1/2} \right\|^p_{L^p(B_j;Hdx)}\,.
$$
To finish the proof of the estimate \eqref{eq:wtRes-3'}, it suffices to show
\begin{equation} \label{bilL3'}
\left\| |E f_{\tau_1,j,\rm tang}|^{1/2}| Ef_{\tau_2,j,\rm tang} |^{1/2} \right\|_{L^3(B_j;Hdx)} \lessapprox 
R^{\ga^0_3}\| f_{\tau_1} \|_{L^2}^{1/2}\| f_{\tau_2} \|_{L^2}^{1/2}\,,
\end{equation}
for each pair $(\tau_1,\tau_2)$ with $\rm{dist}(\tau_1,\tau_2)\geq 1/K$.
We will do so by applying \eqref{bilL3} to $f_{\tau_i,j, \rm tang}$ on each ball $B_j$.  Expand $f_{\tau_i,j, \rm tang}$ into wave packets at the scale $\rho = R^{1 - \delta}$ on the ball $B_j$. By definition of $f_{\tau_i,j, \rm tang}$, each wave packet lies in the $\sim R^{1/2 + \delta}$-neighborhood of $Z$ and the angles between the wave packets and the tangent space of $Z$ are bounded by $R^{-1/2 + 2 \delta}$.  For a detailed description of the wave packet decomposition of $f_{\tau_i,j, tang}$ on a smaller ball, see \cite[Section 7]{lG16}.  Define $E$ so that $\rho^{1/2} E = R^{1/2 + \delta}$.  Since $\rho = R^{1 - \delta}$, there holds $E = R^{(3/2) \delta}$, thus $E \rho^{-1/2} = R^{-1/2 + 2 \delta}$.  Each new wave packet lies in the $E \rho^{1/2}$-neighborhood of $Z$, and the angles between the wave packets and the tangent space of $Z$ are bounded by $E \rho^{-1/2}$.  Therefore, the new wave packets are concentrated in $\ZT_Z(E)$, which enables one to apply \eqref{bilL3}.  Now since $E^{O(1)} = R^{O(\delta)}$ and $(\al-2)/12 \leq \ga^0_3$, the bound from \eqref{bilL3} implies \eqref{bilL3'}. The proof is complete.

\section{Weighted extension estimates in higher dimensions:\\ proof of Theorem \ref{wtRes-d}} \label{sec:wtRes-d}
\setcounter{equation}0

In this section, we prove Theorem \ref{wtRes-d} using polynomial partitioning. Roughly speaking, we will iterate the argument in Section \ref{sec:wtRes-3} in each dimension. Because of the complexity of the iteration scheme and some technical issues, we present the argument using the notion of \emph{narrow} and \emph{broad} part of $Ef$. The broad part, which is the main body of the proof, is estimated by Theorem \ref{wt2br} below, and the narrow part is handled by Lemma \ref{wtnr} based on parabolic rescaling. 

To start with, fixing a large constant $K$, we decompose $B^{d-1}$ into balls $\tau$ of radius $K^{-1}$ and $B_R$ into balls $B_{K^2}$ of radius $K^2$. One naturally has $f=\sum_\tau f_\tau:=\sum_\tau f\chi_\tau$, and $G(\tau)$ denotes the set of directions of wave packets of $f_\tau$. We use $\text{Angle}(G(\tau),V)$ to denote the smallest angle between $v\in G(\tau)$ and $v'\in V$, any vector space in $\mathbb{R}^d$. We are now ready to define the following \emph{broad} norm of $Ef$. Fix $H\in \mathcal{F}_{\alpha,d}$,
$$
\|Ef\|^p_{BL^p_A(B_R;Hdx)}:=\sum_{B_{K^2}\subset B_R} \mu_{Ef}(B_{K^2}),
$$where
$$
\mu_{Ef}(B_{K^2}):=\min_{V_1,\ldots,V_A :1-\text{subspace of }\mathbb{R}^d}\left(\max_{\tau:\,\text{Angle}(G(\tau),V_a)>K^{-1}\text{ for all }a}\int_{B_{K^2}}|Ef_\tau|^p H\,dx\right).
$$Here $A$ is a large constant to be determined later. When the value of $A$ is not important we usually write $\|\cdot\|_{BL^p_A}$ as $\|\cdot\|_{BL^p}$ for short. 
We can extend $\mu_{Ef}$ to be a measure on $B_R$, making it a constant multiple of the Lebesgue measure on each ball $B_{K^2}$. 

The $k$-broad norm was first invented by the second author in \cite{lG16} where it is used as a weaker substitute for the $k$-linear norm but still strong enough to imply linear restriction estimate. The broad norm we are using here is the same as the one in \cite{lG16} in the case $k=2$ except that the measure $Hdx$ is used instead of the Lebesgue measure. The constant $A$ is introduced to ensure that the broad norm satisfies some versions of triangle inequality and H\"older's inequality. We refer the reader to \cite{lG16} for more detailed discussion on properties of broad norm.

The main chunk of the proof of Theorem \ref{wtRes-d} is the following estimate.
\begin{theorem}\label{wt2br}
Let $d\geq 4$, $\alpha\in [\frac{d}{2},\frac{d+1}{2}]$ and $p_d:=\frac{2d}{d-1}$. For all $\epsilon>0$, there is a large constant $A$ so that the following holds for any value of $K$, $R>1$, $H\in\mathcal{F}_{\alpha,d}$: 
$$
\|Ef\|_{BL^{p_d}(B_R;Hdx)}\lesssim_{K,\epsilon} R^{\epsilon+\gamma_d}\|f\|_{L^2(B^{d-1})},
$$where $\gamma_d:=\frac{\alpha}{2d^2}-\frac{1}{4d}$.
\end{theorem}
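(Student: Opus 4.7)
The plan is to prove Theorem \ref{wt2br} by iterating the polynomial partitioning scheme of Guth \cite{lG16} across dimensions, with the weighted linear refined Strichartz estimate of Corollary \ref{cor:linL2} serving as the terminal bound at the bottom of the iteration. The broad norm framework, with its free parameter $A$, is what allows the iteration to not lose information when $f$ is concentrated in a single angular direction.

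The first step is to strengthen the statement into an induction-friendly form. For each dimension $2\le m\le d$, I would introduce a tangent version of the theorem: if $f$ is concentrated in wave packets from $\mathcal T_Z(E)$ for a transverse complete intersection $Z=Z(P_1,\dots,P_{d-m})$ with $\mathrm{Deg}\,P_i\le D_Z$, then
\begin{equation*}
\|Ef\|_{BL^{p_d}_A(B_R\cap N_{R^{1/2+\delta_m}}Z;Hdx)}\lesssim_{K,\epsilon} E^{O(1)}D_Z^{O(1)}R^{\epsilon+\gamma_d(m)}\|f\|_{L^2},
\end{equation*}
where $\gamma_d(d)=\gamma_d$, the exponents $\gamma_d(m)$ decrease appropriately as $m$ decreases, and the admissibility constant $A=A(m)$ grows as $m$ decreases. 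We then run a double induction: the outer induction is on $R$ (using a small loss $R^\epsilon$), and at each scale we perform a reverse induction on $m$.

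The second step is to execute the polynomial partitioning on the $m$-dimensional variety (or on $B_R$ when $m=d$). Following \cite{lG16}, choose a degree $D=R^{\delta_{\mathrm{deg}}}$ partitioning polynomial, producing $\sim D^m$ cells $O_i$ separated by a wall $W=N_{R^{1/2+\delta}}Z(P)\cap B_R\cap N_{R^{1/2+\delta_m}}Z$. One then decomposes the broad norm into cellular and wall contributions. In the cellular case, orthogonality of wave packets that cross few cells lets us pigeonhole to a single cell carrying at most $D^{-(m-1)}$-fraction of $\|f\|_{L^2}^2$; applying the inductive hypothesis at radius $R$ with smaller $L^2$ mass closes this case provided $\gamma_d(m)$ is chosen so the arithmetic works out for the exponent $p_d$. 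On the wall, one splits $B_R$ into $R^{1-\delta}$-balls $B_j$, and further decomposes wave packets into those transverse to $Z(P)$ in $B_j$ (a wave packet crosses $R^{O(\delta_{\mathrm{deg}})}$ such balls transversely) and those tangent to $Z(P)$ in $B_j$ (concentrated in $\mathcal T_{Z(P)\cap Z}(E')$ for a larger parameter $E'$). The transverse wave packets are handled by the induction on $R$ at scale $R^{1-\delta}$, where the gain $R^{-\delta\epsilon}$ defeats the loss $R^{O(\delta_{\mathrm{deg}})}$ provided $\delta_{\mathrm{deg}}\ll\delta\epsilon$. The tangent wave packets invoke the inductive hypothesis at dimension $m-1$ on the lower-dimensional intersection $Z(P)\cap Z$.

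The third step is the base of the dimension induction, at which point one applies Corollary \ref{cor:linL2}: the weighted $L^2$ bound $R^{1/2-(d-\alpha)/(2(m+1))}$ on an $m$-variety is then interpolated against the trivial $L^\infty$ bound $\|f\|_{L^1}\le\|f\|_{L^2}$ to reach $L^{p_d}$, producing an estimate that corresponds to $\gamma_d(m_{\min})$. Finally, to pass from the $BL^{p_d}$ estimate back to the linear statement of Theorem \ref{wtRes-d}, one combines the broad bound with the narrow (small-support) contributions handled by the parabolic rescaling Lemma \ref{ParResc}.

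The main obstacle I anticipate is the bookkeeping of exponents: one must choose the sequence $\gamma_d(m)$ so that, simultaneously, (i) the cellular step closes because the per-cell gain $D^{-(m-1)\cdot p_d/2}$ beats the cell count $D^m$, (ii) the transverse step closes with a genuine $R^{-\delta\epsilon p_d}$ gain, (iii) the tangent step at dimension $m-1$ feeds back into a valid bound at dimension $m$, and (iv) the base case at $m=m_{\min}$ produces exactly $\gamma_d(m_{\min})$. The specific exponent $\gamma_d=\alpha/(2d^2)-1/(4d)$ should emerge as the unique value for which this recursion is consistent, reflecting the optimal balance between the dimension-$d$ polynomial partitioning loss and the dimension-$m_{\min}$ refined Strichartz gain; the constraint $\alpha\in[d/2,(d+1)/2]$ appearing in the hypothesis is precisely the range in which these exponents remain in the regime where the induction closes.
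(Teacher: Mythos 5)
Your overall architecture (double induction on dimension, radius and $A$; polynomial partitioning; cellular/transverse/tangent trichotomy; refined Strichartz as the extra input) matches the paper's, but there is a genuine gap in the cellular step that your choice of norms cannot survive. You propose to run the induction at every intermediate dimension $m$ with the \emph{fixed} exponent $p_d=\frac{2d}{d-1}$. In the cellular case at dimension $m$ the partitioning produces $\sim D^m$ cells, and pigeonholing gives a cell with $\|f_{i_0}\|_{L^2}^2\lesssim D^{-(m-1)}\|f\|_{L^2}^2$; closing the induction then requires $D^{m}\cdot D^{-(m-1)p/2}\ll 1$, i.e.\ $p>\frac{2m}{m-1}=p_m$. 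Since $p_m$ is strictly decreasing in $m$, one has $p_d<p_m$ for every $m<d$, so the cellular case simply does not close at the exponent $p_d$ once you descend below the top dimension — and no choice of the exponents $\gamma_d(m)$ can repair this, because the cellular balance is a comparison of powers of $D$ at fixed powers of $R$.

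The paper's resolution, which is the one structural idea missing from your proposal, is to formulate the inductive statement (Proposition \ref{prop:wtRes-d}) in the norm $BL^{p_m}_A$ with $p_m=\frac{2m}{m-1}$ \emph{varying with the dimension of the variety}. The price is that the dimension-$(m-1)$ hypothesis now gives an $L^{p_{m-1}}$ bound while one needs an $L^{p_m}$ bound with $2<p_m<p_{m-1}$; this gap is bridged in the tangent sub-case by interpolating the inductive $BL^{p_{m-1}}$ estimate against the weighted $BL^2$ estimate of Corollary \ref{cor:linL2}. Consequently the refined Strichartz input is used at \emph{every} dimensional descent, not only as a terminal bound at the bottom of the iteration as in your third step (and the actual base case $m=2$ is handled by the unweighted broad estimate of \cite{lG16} together with Remark \ref{unweighted}, not by an $L^2$--$L^\infty$ interpolation). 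The recursion this produces, $\gamma_m=\max\{-\frac{d}{4m}+\frac14,\,(\frac12-\frac{d-\alpha}{2m})\frac1m+\gamma_{m-1}(1-\frac1m)\}$ with $\gamma_2=-\frac d8+\frac14$, is what yields $\gamma_d=\frac{\alpha}{2d^2}-\frac1{4d}$ in the stated range of $\alpha$; without the variable exponent and the interpolation step your recursion for $\gamma_d(m)$ has no consistent solution.
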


Note that Theorem \ref{wt2br} will be further generalized to Theorem \ref{AvrDec2br} in Section \ref{sec:wtRes-d-al}. To see that Theorem \ref{wt2br} implies the desired Theorem \ref{wtRes-d}, it suffices to apply Lemma \ref{wtnr} below with $p=p_d$, and note that it is straightforward to check $$\ga_d\geq \frac{1-d}{2}+\frac{\alpha+1}{p_d}.$$

\begin{lemma}\label{wtnr}
Let $d\geq 3$, $p\geq 2$ and $\alpha\in (0,d]$. Assume that for all $\epsilon>0$, there exists large constant $A=A(\e)$ such that 
\begin{equation} \label{wtnr-br}
\|Ef\|_{BL^{p}_A(B_R;Hdx)}\lesssim_{K,\epsilon} R^{\epsilon+T_d}\|f\|_{L^2(B^{d-1})}
\end{equation}
holds for all $K,R>1$,$H\in\mathcal{F}_{\alpha,d}$, and that 
\begin{equation} \label{wtnr-resc}
T_d\geq \frac{1-d}{2}+\frac{\alpha+1}{p}.
\end{equation}
Then, for all $\epsilon>0$, $R>1$, $H\in\mathcal{F}_{\alpha,d}$, there holds
\begin{equation} \label{wtnr-lin}
\|Ef\|_{L^{p}(B_R;Hdx)} \leq C_\epsilon R^{\epsilon+T_d}\|f\|_{L^2(B^{d-1})}.
\end{equation}
\end{lemma}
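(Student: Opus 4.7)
The approach is the standard broad-to-linear reduction via induction on the radius, combined with parabolic rescaling (Lemma \ref{ParResc}). We induct on $R$: for $R$ below some threshold depending on $\e$, the estimate \eqref{wtnr-lin} is trivial via Cauchy--Schwarz and the defining bound $\int_{B_R} H \leq R^\al$ for $H \in \cf_{\al,d}$. Assume inductively that \eqref{wtnr-lin} holds for all radii $\leq \tilde R/2$, with the explicit constant $C_\e$; we establish it at $R = \tilde R$.

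Decompose $B^{d-1}$ into $K^{-1}$-caps $\tau$ and write $f = \sum_\tau f_\tau$. On each $K^2$-ball $B_{K^2} \subset B_{\tilde R}$, let $V_1,\dots,V_A$ be the $1$-subspaces of $\ZR^d$ achieving the minimum in the definition of $\mu_{Ef}(B_{K^2})$. Call a cap $\tau$ \emph{narrow} if $\textup{Angle}(G(\tau), V_a) \leq K^{-1}$ for some $a$, and \emph{broad} otherwise. Because the wave packet direction at a point $\om$ is proportional to $(-2\om, 1)$, each line $V_a$ is within angle $K^{-1}$ of $O(1)$ caps, so the number of narrow caps on any $B_{K^2}$ is $\lesssim A$. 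The pointwise triangle inequality gives
\[
|Ef(x)|^p \lesssim \Bigl|\sum_{\tau\ \mathrm{broad}} Ef_\tau(x)\Bigr|^p + \Bigl|\sum_{\tau\ \mathrm{narrow}} Ef_\tau(x)\Bigr|^p, \qquad x \in B_{K^2}.
\]
For the broad term, crudely bound the $L^p$ norm of the sum by $K^{O(1)}$ times the maximum of $\int_{B_{K^2}} |Ef_\tau|^p H$ over broad $\tau$, which by definition is at most $K^{O(1)} \mu_{Ef}(B_{K^2})$. Summing over $B_{K^2}$ and invoking the hypothesized broad estimate \eqref{wtnr-br} yields a broad contribution of at most $K^{O(1)} C_{K,\e}^p \tilde R^{p(T_d + \e)} \|f\|_{L^2}^p$.

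For the narrow term, apply H\"older's inequality (only $\lesssim A$ caps per $B_{K^2}$) to pass to $A^{p-1} \sum_{\tau\ \mathrm{narrow}} |Ef_\tau|^p$, then crudely extend the sum to all $\tau$ and sum over $B_{K^2}$. For each $\tau$, use parabolic rescaling (Lemma \ref{ParResc}) with $\ga = T_d + \e$ and $\tilde C = C_\e$ to bound $\|Ef_\tau\|_{L^p(B_{\tilde R}; Hdx)}$ by $C \cdot C_\e \cdot K^{\frac{\al+1}{p} - \frac{d-1}{2} - T_d - \e} \tilde R^{T_d+\e} \|f_\tau\|_{L^2}$. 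The assumption \eqref{wtnr-resc} makes the exponent of $K$ at most $-\e$. Raising to the $p$-th power and summing in $\tau$, the hypothesis $p \geq 2$ gives $\sum_\tau \|f_\tau\|_{L^2}^p \leq \|f\|_{L^2}^{p-2} \sum_\tau \|f_\tau\|_{L^2}^2 \leq \|f\|_{L^2}^p$ by near-orthogonality; thus the narrow contribution is controlled by $A^{p-1} C^p C_\e^p K^{-\e p} \tilde R^{p(T_d+\e)} \|f\|_{L^2}^p$.

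To close the induction, we choose $C_\e$ so large that the broad contribution is $\leq \tfrac{1}{2} C_\e^p \tilde R^{p(T_d+\e)} \|f\|_{L^2}^p$, and $K$ so large (depending only on $\e$, since $A = A(\e)$ is fixed first by \eqref{wtnr-br}) that $A^{p-1} C^p K^{-\e p} \leq \tfrac{1}{2}$, absorbing the narrow term. The main subtlety is the bookkeeping of constants: the broad-norm estimate comes with a $K$-dependent constant $C_{K,\e}$, which must be compensated by $C_\e$ (not by $K$); meanwhile the $K^{-\e p}$ gain from rescaling must overcome both $A^{p-1}$ and the constant $C^p$ from Lemma \ref{ParResc}. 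This forces the order of quantifiers: fix $\e$, then $A$, then $K$ (dependent on $A$ and $\e$), and finally $C_\e$ (dependent on $K$, $A$, $\e$). Provided this ordering is respected and \eqref{wtnr-resc} is used precisely to make the $K$-exponent non-positive, the induction closes.
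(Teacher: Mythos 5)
Your proposal is correct and follows essentially the same route as the paper's proof: split $Ef$ on each $B_{K^2}$ at the minimizing subspaces into a broad part controlled by the hypothesis \eqref{wtnr-br} and a narrow part (only $O(A)$ caps) controlled by parabolic rescaling (Lemma \ref{ParResc}) together with induction on the radius, condition \eqref{wtnr-resc} to make the $K$-exponent negative, and $\ell^2$-orthogonality with $p\geq 2$. Your explicit bookkeeping of the quantifier order ($\e$, then $A$, then $K$, then $C_\e$) is a slightly more careful rendering of what the paper compresses into the choice $A(\e)\leq K^\delta$, but the argument is the same.
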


\begin{proof}

We write $\|Ef\|_{L^p(B_R;Hdx)}$ as
$$
 \left( \sum_{B_{K^2}\subset B_R} \big\|Ef\big\|_{L^p(B_{K^2};Hdx)}^p \right)^{1/p}\,,
$$
and for each $B_{K^2}$, take $1$-subspaces $V'_1,\cdots,V'_A$ of $\ZR^d$ depending on $B_{K^2}$ and $f$ to be the minimizers obeying
\begin{equation} \label{min'}
 \max_{\tau\notin V_a'\text{ for all }a}\int_{B_{K^2}}|Ef_\tau|^p H\,dx
=\min_{V_1,\ldots,V_A:1-\text{subspace  }}\max_{\tau \notin V_a \text{ for all }a}\int_{B_{K^2}}|Ef_\tau|^p H\,dx \,, 
\end{equation}
where $\tau \notin V_a$ means that $\text{Angle}(G(\tau),V_a)>K^{-1}$.
Then on each $B_{K^2}$ by applying Minkowski inequality to function 
$$Ef=\sum_{\tau \notin V'_a \text{ for all } a} Ef_\tau + 
\sum_{\tau \in V'_a \text{ for some }a}Ef_\tau,$$
we bound $\|Ef\|_{L^p(B_R;Hdx)}$ by 
$$
\left( \sum_{B_{K^2}\subset B_R} \left\|\sum_{\tau \notin V'_a \text{ for all } a} Ef_\tau \right\|_{L^p(B_{K^2};Hdx)}^p \right)^{1/p}
$$
$$
+
\left( \sum_{B_{K^2}\subset B_R} \left\|\sum_{\tau \in V'_a \text{ for some } a} Ef_\tau \right\|_{L^p(B_{K^2};Hdx)}^p \right)^{1/p}\,.
$$
By the choice as in \eqref{min'} and assumption \eqref{wtnr-br}, the first term is bounded by
$$
K^{O(1)} \big\| Ef \big\|_{BL^p_A (B_R; Hdx)} \lesssim_{K,\epsilon} R^{\epsilon+T_d}\|f\|_{L^2(B^{d-1})} \,.
$$
Note that there are only $O(A)$ many $\tau$'s that are ``in''
$V_1', \cdots, V_{A}'$. We choose $K=K(\e)$ large enough so that $A=A(\e)\leq K^\delta$. Henceforth, the second term is controlled by
$$
\leq K^{O(\delta)} \big\| \max_{\tau} | Ef_\tau | \big\|_{L^p(B_R;Hdx)}
\leq K^{O(\delta)}\big(\sum_{\tau} \big\|Ef_\tau\big\|^p_{L^p(B_R;Hdx)}\big)^{1/p}
\,.
$$
Note that to prove the desired estimate \eqref{wtnr-lin}, one can induct on radius $R$. Therefore by applying Lemma \ref{ParResc} which is based on parabolic rescaling and induction on physical radius, the narrow part above is further estimated by
$$
\lesssim C_\e K^{O(\delta)}K^{\frac{\al+1}{p}-\frac{d-1}{2}-\e-T_d}R^{\e+T_d} \big(\sum_\tau \|f_\tau\|_{L^2}^p\big)^{1/p}\,.
$$
Due to orthogonality and the fact $p\geq 2$, we have $\big(\sum_\tau \|f_\tau\|_{L^2}^p\big)^{1/p} \lesssim \|f\|_{L^2}$. Moreover by the assumption \eqref{wtnr-resc}, $K^{O(\delta)+\frac{\al+1}{p}-\frac{d-1}{2}-\e-T_d}\ll 1$. Therefore, the narrow part can be estimated as desired by induction and the proof is complete.
\end{proof}

It remains to prove Theorem \ref{wt2br}.
Similarly as the three-dimensional case treated in the previous section, we apply polynomial partitioning (but iteratively in different dimensions). To make use of induction on dimensions, we generalize Theorem \ref{wt2br} to the following main inductive proposition: 

\begin{proposition}\label{prop:wtRes-d}
Given $d\geq 4$, $\alpha\in [\frac{d}{2},\frac{d+1}{2}]$. For all $\epsilon>0$, there exist a large constant $\bar{A}>1$ and small constants $0<\delta\ll \delta_{d-1}\ll\ldots\ll \delta_1\ll\epsilon$ so that the following holds. Let $m$ be a dimension in the range $2\leq m\leq d$, and $p_m:=\frac{2m}{m-1}$. Suppose that $Z=Z(P_1,\ldots, P_{d-m})$ is a transverse complete intersection with ${\rm Deg}P_i\leq D_Z$, and that $f\in L^2(B^{d-1})$ is concentrated in wave packets from $\ZT_Z(R^{\delta_m})$. Then for any $1\leq A\leq\bar{A}$, $R\geq 1$ and $H\in\mathcal{F}_{\alpha,d}$,
\begin{equation}\label{wtRes-d-m}
\|Ef\|_{BL^{p_m}_A (B_R;Hdx)}\leq C(K,\epsilon,m,D_Z) R^{m\e} R^{\delta(\log\bar{A}-\log A)} R^{\ga_m}\|f\|_{L^2},
\end{equation}where
\[
\gamma_m:=\begin{cases} -\frac{d}{4m}+\frac{1}{4}, & 2\leq m\leq d-1,\\
\frac{\alpha}{2d^2}-\frac{1}{4d}, & m=d.
\end{cases}
\]
\end{proposition}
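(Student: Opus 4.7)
The plan is to prove Proposition \ref{prop:wtRes-d} by a nested induction: the outer induction runs downward on the variety dimension $m$ from $m=d$ to $m=2$, and for each fixed $m$ an inner induction runs on the physical radius $R$. The base case $m=2$ is handled directly by the bilinear weighted $L^{r_2}$ estimate, Corollary \ref{cor:bilLp}: on each $K^2$-ball contributing to the broad norm, the minimizing $1$-subspaces $V_1,\ldots,V_A$ let us extract two caps $\tau_1,\tau_2$ whose wave packet directions form angles $\gtrsim K^{-1}$ with all $V_a$ and hence with each other, so $\int_{B_{K^2}}|Ef_\tau|^{p_2}H\,dx\lesssim K^{O(1)}\int_{B_{K^2}}|Ef_{\tau_1}|^{p_2/2}|Ef_{\tau_2}|^{p_2/2}H\,dx$ by H\"older. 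Summing over $K^2$-balls and invoking Corollary \ref{cor:bilLp} (with $E=R^{\delta_2}$, $m=2$) gives a bound $\lessapprox R^{-(2d-4-\alpha)/12}$, which is $\leq R^{\gamma_2}$ within the $\alpha$-range; the $E^{O(1)}=R^{O(\delta_2)}$ loss is absorbed into $R^{m\e}$ since $\delta_2\ll\e$.

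For the inductive step at dimension $m\geq 3$, perform polynomial partitioning \emph{on the variety} $Z$, applied to the measure $\mu_{Ef}$: following \cite[\S8]{lG16}, one produces a polynomial $P$ of degree $D=R^{\delta_{\rm deg}}$ such that $Z\setminus Z(P)$ splits into $\sim D^m$ open cells $O_i$ each carrying an equal share of the broad mass, and a wall $W=N_{R^{1/2+\delta}}Z(P)\cap B_R$. In the \emph{cellular} case, pigeonhole selects an $O_i$ with $\|Ef\|^{p_m}_{BL^{p_m}(O_i';Hdx)}\sim D^{-m}\|Ef\|^{p_m}_{BL^{p_m}}$, while the B\'ezout-type bound that each wave packet tangent to $Z$ meets at most $D^{d-m+1}$ cells gives $\|f_i\|_{L^2}^2\lesssim D^{-(m-1)}\|f\|_{L^2}^2$; the inner induction on $R$ then closes with a small $R^{-\delta\e}$ gain. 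For the \emph{wall}, cover $B_R$ by balls $B_j$ of radius $R^{1-\delta}$ and split each $f=f_{j,\rm tang}+f_{j,\rm trans}$ as in Section \ref{sec:wtRes-3}; the transverse piece is controlled by the inner induction on $R$ using \cite[Lem.~3.5]{lG} (each tube is transverse in only $R^{O(\delta_{\rm deg})}$ balls $B_j$) together with orthogonality, again with an $R^{-\delta}$ gain that closes the induction on $R$. The tangent piece on $B_j$ is concentrated in wave packets (rescaled to scale $\rho=R^{1-\delta}$) tangent, in the sense of $\mathcal{T}_{Z'}(R^{(3/2)\delta})$, to the $(m-1)$-dimensional transverse complete intersection $Z'=Z\cap Z(P)$; this is where the outer induction hypothesis at dimension $m-1$ is invoked, paying a broad-norm budget of $R^{\delta(\log\bar A-\log A')}$ for some $A'<A$, which is why the constant $\bar A$ is allowed to shrink through the nested broad-norm manipulations.

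The main obstacle is arithmetic: one must verify that the exponents $\gamma_m=-d/(4m)+1/4$ for $m<d$ and $\gamma_d=\alpha/(2d^2)-1/(4d)$ satisfy the right recursion so that the wall/tangent estimate at dimension $m-1$ assembles, after summing over $\sim R^{d\delta}$ balls $B_j$ and paying the $R^{(1-\delta)\gamma_{m-1}}$ from the inductive hypothesis, to something $\leq R^{\gamma_m}$. A short computation shows $(1-\delta)\gamma_{m-1}+O(\delta)\leq\gamma_m-\delta\e$, because $\gamma_{m-1}-\gamma_m=d/(4m(m-1))>0$ for $m\leq d-1$, while for the top step $m=d$ the gap $\gamma_{d-1}-\gamma_d$ is tuned precisely by $\alpha\leq(d+1)/2$ to absorb the weight contribution $R^{\alpha/2}$ from $H\in\mathcal{F}_{\alpha,d}$ that enters via H\"older on each $B_j$ (this is where the $\alpha$-dependence of $\gamma_d$ is forced). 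The subsidiary difficulties are the standard broad-norm bookkeeping from \cite[\S9]{lG16}: checking that $\mu_{Ef}$ behaves well under restriction to cells, that a triangle inequality for $BL^p_A$ holds with loss $K^{O(1)}$ as $A$ drops by a bounded amount, and that the weighted measure $\mu_{Ef}$ (defined via the weight $H$) can be used as input to polynomial partitioning on $Z$ — all minor modifications of the arguments in \cite{lG16} since the weight $H$ only affects the $\mu_{Ef}(B_{K^2})$ values, not the partitioning geometry itself.
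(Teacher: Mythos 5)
The central gap is in your treatment of the tangent sub-case, which is where the whole content of the proposition lives. You propose to bound the tangent contribution by invoking the dimension-$(m-1)$ inductive hypothesis directly and absorbing the weight ``via H\"older on each $B_j$.'' But the hypothesis at dimension $m-1$ controls the $BL^{p_{m-1}}$ norm with $p_{m-1}=\frac{2(m-1)}{m-2}>p_m$, so passing to $BL^{p_m}$ by H\"older costs a factor $\bigl(\int_{N(Y)\cap B_j}H\,dx\bigr)^{\frac{1}{p_m}-\frac{1}{p_{m-1}}}$, and the resulting recursion $\gamma_m\geq\gamma_{m-1}+(\text{H\"older cost})$ does not reproduce the stated exponents for any $\alpha>\frac d2$ (e.g.\ at $d=4$, $\alpha=2$, $m=d$ it demands $\gamma_4\geq \frac{1}{48}$, whereas $\gamma_4=0$). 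The paper instead \emph{interpolates} the $BL^{p_{m-1}}$ inductive bound against the weighted $BL^2$ bound of Corollary \ref{cor:linL2} (linear refined Strichartz), using $\frac{1}{p_m}=\frac1m\cdot\frac12+(1-\frac1m)\cdot\frac{1}{p_{m-1}}$, which yields the recursion $\gamma_m\geq(\frac12-\frac{d-\alpha}{2m})\cdot\frac1m+\gamma_{m-1}\cdot(1-\frac1m)$. That interpolation is precisely where the refined Strichartz input and the $\alpha$-dependence of $\gamma_d$ enter; Subsection \ref{subsec:compare} explicitly notes that the H\"older route you describe ``produces the weakest result among all options.'' Your supporting arithmetic also has a sign error: $\gamma_{m-1}-\gamma_m=-\frac{d}{4m(m-1)}<0$, not positive, and the constraint $\gamma_m\geq-\frac{d}{4m}+\frac14$ is not part of the dimensional recursion at all.

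Two further problems. First, in the transverse sub-case, since $\gamma_m<0$ for $m\leq d-1$, applying the inductive hypothesis at radius $\rho<R$ produces a \emph{loss} $(R/\rho)^{-p_m\gamma_m}=R^{+O(\delta_{m-1})}$ rather than the ``$R^{-\delta}$ gain'' you claim; orthogonality and Lemma 3.5 of \cite{lG} alone do not close this induction. One needs the equidistribution estimate \eqref{trans-equi}, $\max_b\|f_{j,{\rm trans},b}\|_{L^2}^2\lesssim R^{O(\delta_m)}(R/\rho)^{-(d-m)/2}\|f_{j,{\rm trans}}\|_{L^2}^2$, whose gain $(R/\rho)^{-\frac{(d-m)}{2}(\frac{p_m}{2}-1)}$ is exactly what forces the constraint \eqref{eq:trans}, i.e.\ $\gamma_m\geq-\frac{d}{4m}+\frac14$. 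Second, your base case does not work as stated: Corollary \ref{cor:bilLp} at $m=2$ is an $L^{r_2}=L^3$ estimate, not an $L^{p_2}=L^4$ one, and even setting that aside its exponent $\frac{\alpha+4-2d}{12}$ satisfies $\frac{\alpha+4-2d}{12}\leq-\frac d8+\frac14=\gamma_2$ only when $\alpha\leq\frac d2-1$, which is outside the range $\alpha\in[\frac d2,\frac{d+1}{2}]$ (at $d=4$, $\alpha=2$ it gives $-\frac16>-\frac14$). The correct and much simpler base case is the unweighted broad estimate (Proposition 8.1 of \cite{lG16}) combined with Remark \ref{unweighted}.
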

In the proposition, $f$ being concentrated in wave packets from $\ZT_Z(R^{\delta_m})$ is defined as in subsection \ref{wpd}. It is easy to see that the case $m=d$, $Z=\mathbb{R}^d$, $A=\bar{A}$ in the proposition above is precisely the desired result of Theorem \ref{wt2br}. Proposition \ref{prop:wtRes-d} will be proven by induction (on dimension $m$, radius $R$, and on $A$) with the assistance of the linear refined Strichartz in each step (more precisely, the linear weighted $L^2$ estimate in Corollary \ref{cor:linL2} for each dimension $m$). The rest of this section is devoted to the proof of Proposition \ref{prop:wtRes-d}.


The base case $m=2$ (for all $R$ and $A$) follows immediately from the unweighted estimate (Proposition 8.1 of \cite{lG16}) and Remark \ref{unweighted}. If $R$ is small, then choosing the implicit constant large enough will finish the proof. If $A=1$, then by choosing $\bar{A}$ large enough, the desired estimate follows from the trivial $L^1\to L^\infty$ estimate of $E$. Now fix $m\leq d$ and assume that the desired estimates hold true if one decreases $m$, $R$, or $A$.

We say we are in algebraic case if there is a transverse complete intersection $Y^{m-1}\subset Z^m$ of dimension $m-1$, defined using polynomials of degree $\leq D(\e,D_Z)$ (a function to be determined later), such that
$$
\mu_{Ef}(N_{R^{1/2+\delta_m}}(Y)\cap B_R)\gtrsim \mu_{Ef}(B_R).
$$
Otherwise we say that we are in the non-algebraic (or \emph{cellular}) case.

\subsection{The non-algebraic case}
In the non-algebraic case, we use polynomial partitioning and induction on radius $R$. Since the argument is exactly the same as in Subsection 8.1 of \cite{lG16}, here we just give a brief description. 

First by pigeonholing we can locate a significant piece of $N_{R^{1/2+\delta_m}}(Z)\cap B_R$ where at each point the angle between the tangent space of $Z$ and a fixed $m$-plane $V$ is within $1/100$. Then perform the regular polynomial partitioning in $V$ and pull the polynomial on $V$ back via the orthogonal projection $\pi: \ZR^d \rightarrow V$. We end up with a polynomial $P$ on $\ZR^d$ of degree $\leq D=D(\e,D_Z)$, for which $\ZR^d\backslash Z(P)$ is a union of $\sim D^m$ open sets $O_i$ and the following properties hold. Define $W:= N_{R^{1/2+\de}}Z(P)$, $O'_i:=O_i\backslash W$ and $f_i = \sum_{(\theta,\nu)\in \ZT_i}f_{\theta,\nu}$, where
$$
\ZT_i:=\big\{(\theta,\nu):T_{\theta,\nu}\cap O'_i \neq \emptyset\big\}.
$$
Since we are in the non-algebraic case, for $\sim D^m$ cells $O'_i$, 
$$
\|Ef\|^{p}_{BL^{p}_A (B_R;Hdx)} \lesssim D^m \|Ef\|^{p}_{BL^{p}_A (O'_i;Hdx)} \lesssim D^m \|Ef_i\|^{p}_{BL^{p}_A (B_R;Hdx)}\,.
$$
In addition, by orthogonality and the geometric observation that each $(\theta,\nu)$ belongs to $\lesssim D$ collections $\ZT_i$, we have
$$
\sum_i \|f_i\|_{L^2}^2 \lesssim D \|f\|_{L^2}^2\,.
$$
Therefore, by the same argument as in three dimensions, the induction for the non-algebraic case closes provided that $p>p_m=\frac{2m}{m-1}$. Then, applying H\"older's inequality and letting $p\to p_m^+$ will justify the same estimate for the endpoint $p=p_m$.

\subsection{The algebraic case}
In the algebraic case, there exists a transverse complete intersection $Y$ of dimension $m-1$, defined using polynomials of degree $\leq D(\epsilon,D_Z)$ such that 
\[
\mu_{Ef}(N_{R^{1/2+\delta_m}}(Y)\cap B_R)\gtrsim \mu_{Ef}(B_R).
\]

In this case, we first subdivide $B_R$ into smaller balls $B_j$ of radius $\rho$, chosen such that $\rho^{1/2+\delta_{m-1}}=R^{1/2+\delta_m}$. One has
\[
\|Ef\|^{p_m}_{BL^{p_m}_A(B_R;Hdx)}\lesssim\sum_{j}\|Ef_j\|^{p_m}_{BL^{p_m}_A(B_j;Hdx)}+{\rm RapDec}(R)\|f\|_{L^2}^{p_m},
\]where
\[
f_j:=\sum_{(\theta,\nu)\in\ZT_j} f_{\theta,\nu},\quad \ZT_j:=\{(\theta,\nu):\,T_{\theta,\nu}\cap N_{R^{1/2+\delta_m}}(Y)\cap B_j\neq \emptyset\}.
\]
Similarly as in Section \ref{sec:wtRes-3}, we further subdivide $\ZT_j$ into tubes that are tangent to $Y$ and tubes that are transverse to $Y$. We say that $T_{\theta,\nu} \in \ZT_j$ is \emph{tangent} to $Y$ in $B_j$ if
\begin{equation}
T_{\theta,\nu}\cap 2B_j\subset N_{R^{1/2+\delta_m}}(Y) \cap 2B_j =
N_{\rho^{1/2+\delta_{m-1}}}(Y) \cap 2B_j
\end{equation}
and for any non-singular point $y\in Y \cap 2B_j\cap N_{10R^{1/2+\delta_m}}T_{\theta,\nu}$,
\begin{equation}
 \text{Angle}(G(\theta),T_y Y)\leq \rho^{-1/2+\delta_{m-1}}\,.
\end{equation}

We denote the tangent and transverse wave packets by
\[
\ZT_{j,{\rm tang}}:=\{(\theta,\nu)\in\ZT_j:\, T_{\theta,\nu} \text{ is tangent to } Y \text{ in } B_j\},\quad \ZT_{j,{\rm trans}}:=\ZT_j\setminus \ZT_{j,{\rm tang}},
\]and let
\begin{equation}\label{eqn:trans}
f_{j,{\rm tang}}=\sum_{(\theta,\nu)\in\ZT_{j,{\rm tang}}} f_{\theta,\nu},\quad f_{j, {\rm trans}}=\sum_{(\theta,\nu)\in\ZT_{j,{\rm trans}}} f_{\theta,\nu},
\end{equation}then
\begin{align*}
\sum_{j}\|Ef_j\|^{p_m}_{BL^{p_m}_A(B_j;Hdx)}\lesssim &\sum_j\|Ef_{j,{\rm tang}}\|^{p_m}_{BL^{p_m}_{A/2}(B_j;Hdx)}\\+&\sum_j\|Ef_{j,{\rm trans}}\|^{p_m}_{BL^{p_m}_{A/2}(B_j;Hdx)}.
\end{align*}
We will control the contribution from the tangent wave packets by induction of the dimension $m$, and the one from the transverse wave packets by induction on the radius $R$.

\subsection{The tangent sub-case}
In this subsection, we control the tangent term
$$
\sum_j\|Ef_{j,{\rm tang}}\|^{p_m}_{BL^{p_m}_{A/2}(B_j;Hdx)}
$$
by induction on dimension $m$. 
In order to apply the induction hypotheses to $Ef_{j,{\rm tang}}$ on $B_j$, one needs to first redo the wave packet decomposition at the scale $\rho$. 
By definition of $\ZT_{j,\rm tang}$, it is easy to check, as in the $3$-dimensional case in the previous section, that such a wave packet $T_{\tilde\theta,\tilde\nu}$ of dimensions $\rho^{1/2+\delta}\times \cdots\times \rho^{1/2+\delta}\times \rho$ is $\rho^{-1/2+\delta_{m-1}}$-tangent to $Y$ in $B_j$, in other words, $f_{j,{\rm tang}}$ satisfies the hypotheses of Proposition \ref{prop:wtRes-d} at scale $\rho$ in dimension $m-1$. Therefore, by induction on the dimension one has
\begin{equation}\label{eqn:INT1}
\begin{split}
&\|Ef_{j,{\rm tang}}\|_{BL^{p_{m-1}}_{A/2}(B_j;Hdx)}\\
\leq &C(K,\epsilon,m-1,D(\epsilon,D_Z)) \rho^{(m-1)\e}\rho^{\delta(\log\bar A-\log (A/2))} \rho^{\gamma_{m-1}}\|f_{j,{\rm tang}}\|_{L^2}.
\end{split}
\end{equation}

On the other hand, it follows immediately from the definition of the broad norm and Corollary \ref{cor:linL2} that
\begin{equation}\label{eqn:INT2}
\|Ef_{j,\rm tang}\|^2_{BL^2(B_j;Hdx)}\leq \sum_{\tau}\|Ef_{\tau,j,\rm tang}\|^2_{L^2(B_j;Hdx)}
\end{equation}
$$
\leq C_\e  \rho^{O(\delta_{m-1})}\rho^{\epsilon+1 -\frac{d-\al}m}\sum_\tau\|f_{\tau,j,\rm tang}\|^2_{L^2}
\leq C_\e  \rho^{O(\delta_{m-1})}\rho^{\epsilon+1 -\frac{d-\al}m} \|f_{j,\rm tang}\|^2_{L^2}.
$$
Observing that $2<p_m=\frac{2m}{m-1}<\frac{2(m-1)}{m-2}=p_{m-1}$, one can interpolate estimates (\ref{eqn:INT1}), (\ref{eqn:INT2}) above to obtain
\begin{align*}
&\|Ef_{j,{\rm tang}}\|_{BL^{p_m}_{A/2}(B_j;Hdx)}\\ \leq &C_{K,\epsilon,m-1,D(\e,D_Z)}\rho^{O(\delta_{m-1})+(m-1)\epsilon+\delta(\log\bar{A}-\log(A/2))+\gamma_{m,m-1}}\|f_{j,{\rm tang}}\|_{L^2},
\end{align*}
where
\[
\gamma_{m,m-1}=(\frac{1}{2}-\frac{d-\alpha}{2m})\cdot\frac{1}{m}+\gamma_{m-1}\cdot(1-\frac{1}{m}).
\]Note that the number of balls $B_j$ is $\lesssim R^{O(\delta_{m-1})}$, hence one can sum over the balls to obtain
\begin{equation}\label{eqn:tang-d}
\begin{split}
&\big(\sum_j\|Ef_{j,{\rm tang}}\|^{p_m}_{BL^{p_m}_{A/2}(B_j;Hdx)}\big)^{1/p_m}\\
&\leq C_{K,\epsilon,m-1,D(\e,D_Z)}
R^{O(\delta_{m-1})+(m-1)\epsilon+\delta(\log\bar{A}-\log(A/2))}\rho^{\gamma_{m,m-1}}\|f\|_{L^2}\\
&\leq C_{K,\epsilon,m,D(\e,D_Z)} R^{O(\delta_{m-1})+(m-1)\e+(\log 2)\delta}R^{\delta(\log\bar{A}-\log A)+\gamma_{m,m-1}}\|f\|_{L^2}.
\end{split}
\end{equation}In the last inequality above, even though $\gamma_{m,m-1}$ can be negative, one still has $\rho^{\gamma_{m,m-1}}\leq R^{O(\delta_{m-1})}R^{\gamma_{m,m-1}}$. Since $\delta, \delta_{m-1}\ll\epsilon$, $$R^{O(\delta_{m-1})+(m-1)\epsilon+(\log 2)\delta}\ll R^{m\e}\,,$$ 
hence the inductive argument for the tangent term is done as long as 
\begin{equation}
\ga_m\geq (\frac{1}{2}-\frac{d-\alpha}{2m})\cdot\frac{1}{m}+\gamma_{m-1}\cdot(1-\frac{1}{m})\,.
\end{equation}

\subsection{The transverse sub-case}
In this subsection we deal with the transverse term
$$
\sum_j\|Ef_{j,{\rm trans}}\|^{p_m}_{BL^{p_m}_{A/2}(B_j;Hdx)}
$$
by induction on the radius $R$. The argument is exactly the same as in the Subsection 8.4 of \cite{lG16}, hence we omit the details and only briefly recall several essential steps. 

As in the tangent sub-case, in order to apply induction on radius, we need to redo wave packet decomposition for $f_{j,\rm trans}$ at scale $\rho$. Since the old relevant wave packets are in $\ZT_Z(R^{\delta_m})$, for a new relevant wave packet $T_{\tilde\theta,\tilde\nu}$ of dimensions $\rho^{1/2+\delta}\times\cdots\times\rho^{1/2+\delta}\times\rho$, the angle between $G(\tilde \theta)$ and the tangent spaces of $Z$ near their intersection is $\lesssim R^{-1/2+\delta_m}+\rho^{-1/2} \lesssim \rho^{-1/2+\delta_m}$. We decompose $N_{R^{1/2+\delta_m}}(Z)\cap B_j$ into translates of $N_{\rho^{1/2+\delta_m}}(Z) \cap B_j$, say $N_{\rho^{1/2+\delta_m}} (Z+b) \cap B_j$, $|b|\leq  R^{1/2+\delta_m}$. Define $f_{j,{\rm trans},b}$ using the new wave packets which intersect $N_{\rho^{1/2+\delta_m}} (Z+b) \cap B_j$. Because of the angle condition, $f_{j,{\rm trans}, b}$ is concentrated in new wave packets that are $\rho^{-1/2+\delta_m}$-tangent to $Z+b$ inside $B_j$. We can choose a set of translations $\{b\}$ such that
\begin{equation}\label{trans-b}
\|Ef_{j,{\rm trans}}\|^{p_m}_{BL^{p_m}_{A/2}(B_j;Hdx)} \lesssim (\log R)
\sum_b \|Ef_{j,{\rm trans},b}\|^{p_m}_{BL^{p_m}_{A/2}(B_j;Hdx)}\,.
\end{equation}
By orthogonality and Lemma 5.7 in \cite{lG16} which controls the transverse intersections between a tube and an algebraic variety, one has
\begin{equation} \label{trans-ortho}
\sum_{j,b}\|f_{j,{\rm trans},b}\|_{L^2}^2 \lesssim \|f\|_{L^2}^2\,.
\end{equation}
Moreover, there holds the equi-distribution estimate (c.f. Section 7 of \cite{lG16})
\begin{equation} \label{trans-equi}
\max_b \|f_{j,{\rm trans},b}\|_{L^2}^2
\leq R^{O(\delta_m)} \left(\frac{R^{1/2}}{\rho^{1/2}}\right)^{-(d-m)} \|f_{j,\rm trans}\|_{L^2}^2\,.
\end{equation}
By inductive hypothesis we can apply \eqref{wtRes-d-m} to $\|Ef_{j,{\rm trans},b}\|_{BL^{p_m}_{A/2}(B_j;Hdx)}$ to obtain
\begin{equation*}
\begin{split}
&\sum_j\|Ef_{j,{\rm trans}}\|^{p_m}_{BL^{p_m}_{A/2}(B_j;Hdx)} 
\lesssim (\log R)\sum_{j,b} \|Ef_{j,{\rm trans},b}\|^{p_m}_{BL^{p_m}_{A/2}(B_j;Hdx)}\\
\lesssim &(\log R) \sum_{j,b} \big[\rho^{m\e} \rho^{\delta(\log\bar{A}-\log (A/2))} \rho^{\ga_m}\|f_{j,{\rm trans},b}\|_{L^2}\big]^{p_m}\,.
\end{split}
\end{equation*}
It follows from \eqref{trans-ortho} and \eqref{trans-equi} that
$$
\sum_{j,b}\|f_{j,{\rm trans},b}\|_{L^2}^{p_m} \leq R^{O(\delta_m)}\left(\frac{R^{1/2}}{\rho^{1/2}}\right)^{-(d-m)(\frac{p_m}{2}-1)} \|f\|_{L^2}^{p_m}\,,
$$
therefore,
\begin{equation*}
\begin{split}
&\sum_j\|Ef_{j,{\rm trans}}\|^{p_m}_{BL^{p_m}_{A/2}(B_j;Hdx)}\\
\lesssim & R^{O(\delta_m)} \big[\rho^{m\e} R^{\delta(\log\bar{A}-\log A)} \rho^{\ga_m}\big]^{p_m} \left(\frac{R^{1/2}}{\rho^{1/2}}\right)^{-(d-m)(\frac{p_m}{2}-1)} \|f\|_2^{p_m}\,.
\end{split}
\end{equation*}
Choosing $\delta_m \ll \e\delta_{m-1}$, one has $$R^{O(\delta_m)}\left(\frac{R}{\rho}\right)^{-m\e}=R^{O(\delta_m)}R^{-O(\e\delta_{m-1})} \ll 1.$$ Henceforth the induction closes as long as 
$$
\frac 12 (d-m)\big(\frac{p_m}{2}-1\big) + p_m \ga_m \geq 0\,,
$$
that is,
\begin{equation} \label{eq:trans}
\ga_m\geq -\frac{d}{4m}+\frac{1}{4}\,.
\end{equation}

\subsection{Summary}
Because of the inductive argument for the non-algebraic case, the exponent $p_m=\frac{2m}{m-1}$ is the smallest possible one can work with. Starting with 
$$\ga_2=-\frac{d}{8}+\frac 14,$$
the algebraic case gives the constraint
$$
\ga_m \geq \max\left\{-\frac{d}{4m}+\frac{1}{4}\,,\,(\frac{1}{2}-\frac{d-\alpha}{2m})\cdot\frac{1}{m}+\gamma_{m-1}\cdot(1-\frac{1}{m}) \right\}
$$
It is straightforward to check that in the range $\alpha\in[\frac{d}{2},\frac{d+1}{2}]$, one can take
\[
\ga_m =\begin{cases}
-\frac{d}{4m}+\frac{1}{4}, & 2\leq m\leq d-1,\\
\frac{\alpha}{2d^2}-\frac{1}{4d}, & m=d.
\end{cases}
\]
This completes the proof of Proposition \ref{prop:wtRes-d}.

\section{Generalized weighted extension estimates in higher dimensions:\\ proof of Theorem \ref{AvrDec-wtRes} for $d\geq 4$} \label{sec:wtRes-d-al}
\setcounter{equation}0

In this section, we prove Theorem \ref{AvrDec-wtRes} for $d\geq 4$, which generalizes Theorem \ref{wtRes-d} to the full range of $\al$. Same as in Section \ref{sec:wtRes-d}, Theorem \ref{AvrDec-wtRes} is a result of the following broad extension estimate and Lemma \ref{wtnr}.

\begin{theorem}\label{AvrDec2br}
Let $d\geq 4$, $\alpha\in (0,d]$ and $p_d=\frac{2d}{d-1}$. For all $\epsilon>0$, there is a large constant $A$ so that the following holds for any value of $K,R>1$ and any $H\in\mathcal{F}_{\alpha,d}$:
\[
\|Ef\|_{BL^{p_d}(B_R;Hdx)}\lesssim_{K,\epsilon}R^{\epsilon+\gamma_d(\alpha)}\|f\|_{L^2(B^{d-1})},
\]where
\[
\gamma_d(\alpha):=\begin{cases}
\frac{(1+2S_4^d)\alpha}{4d}-\frac{1}{2d}-\frac{S^d_4}{2},& \alpha\in (d-1, d],\\
\frac{S^d_4 \alpha}{2d}+\frac{1}{4}-\frac{3}{4d}-\frac{S^d_4}{2},& \alpha\in (d-2, d-1],\\
\frac{S^d_\ell \alpha}{2d}+\frac{1}{4}-\frac{\ell-1}{4d}-\frac{S^d_\ell}{2}, &\alpha\in (d-\frac{\ell}{2}, d-\frac{\ell}{2}+\frac{1}{2}],\,\forall 5\leq\ell\leq d,\\
0,& \alpha\in (0,\frac{d}{2}],
\end{cases}
\]and $S_\ell^d:=\sum_{i=\ell}^d\frac{1}{i}$ if $\ell\leq d$, $0$ otherwise.
\end{theorem}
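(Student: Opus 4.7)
The plan is to prove Theorem \ref{AvrDec2br} by adapting the polynomial-partitioning scheme of Section \ref{sec:wtRes-d} to handle the full range $\alpha \in (0, d]$. As in the proof of Theorem \ref{wt2br}, it suffices (via Lemma \ref{wtnr}) to establish the claimed broad $L^{p_d}$ bound, which will follow from an inductive proposition in the style of Proposition \ref{prop:wtRes-d}, now with $\gamma_m(\alpha)$ piecewise linear in $\alpha$ at each intermediate dimension $m$ from $2$ to $d$.

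The induction will again run simultaneously on the dimension $m$ of the transverse complete intersection $Z$, on the radius $R$, and on the broad-norm parameter $A$. At each step I apply polynomial partitioning: the cellular case closes by induction on $R$, and the algebraic case splits into a transverse sub-case (closed by induction on $R$, yielding the constraint $\gamma_m(\alpha) \geq \tfrac{1}{4} - \tfrac{d}{4m}$) and a tangent sub-case (closed by induction on $m$, via interpolation between the linear weighted $L^2$ bound of Corollary \ref{cor:linL2} and the induction hypothesis $BL^{p_{m-1}}$ at dimension $m-1$). The tangent sub-case produces the recursive constraint
\[
\gamma_m(\alpha) \geq \frac{1}{2m} - \frac{d-\alpha}{2m^2} + \frac{m-1}{m}\,\gamma_{m-1}(\alpha).
\]
Introducing $\beta_m(\alpha) := m\gamma_m(\alpha)$ telescopes this recursion into $\beta_d(\alpha) = \beta_{\ell-1}(\alpha) + \tfrac{d-\ell+1}{2} - \tfrac{d-\alpha}{2}\,S^d_\ell$; terminating at dimension $\ell - 1$ with the transverse base $\gamma_{\ell-1}(\alpha) = \tfrac{1}{4} - \tfrac{d}{4(\ell-1)}$ and iterating up to $m = d$ then produces, after simplification, exactly the formula $\gamma_d(\alpha) = \tfrac{d-\ell+1}{4d} - \tfrac{(d-\alpha)S^d_\ell}{2d}$ appearing in the theorem (for the second through last cases), with each subinterval of $\alpha$ corresponding to the choice of terminating dimension $\ell - 1$ at which the transverse bound first begins to dominate the tangent recursion. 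For $\alpha \in (d-1, d]$, the first case of the formula is extracted analogously, with one additional layer of polynomial partitioning at the third dimension contributing the extra $1/(4d)$ in the slope.

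The main obstacle will be the case analysis required to glue the piecewise assignments together: for each $\alpha$-subinterval I have to verify that the piecewise $\gamma_m(\alpha)$ are simultaneously consistent with both the transverse and the tangent constraints at every intermediate $m$, that the piecewise values match at the interval endpoints (so that $\gamma_d(\alpha)$ is continuous), and that the parabolic-rescaling inequality \eqref{wtnr-resc} of Lemma \ref{wtnr} continues to hold throughout so that the narrow contribution is still absorbed by the induction on $R$. For $\alpha \in (0, d/2]$ the desired bound $\gamma_d(\alpha) = 0$ can be extracted directly from Corollary \ref{cor:linL2} combined with the unweighted Tomas--Stein estimate (via Remark \ref{unweighted} and H\"older), bypassing the dimension induction entirely. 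Once the piecewise assignment of $\gamma_m(\alpha)$ has been pinned down, each individual inductive step runs exactly as in Section \ref{sec:wtRes-d} with only numerological modifications, so the bulk of the remaining work is bookkeeping rather than the introduction of new ideas.
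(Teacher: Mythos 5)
Your overall architecture matches the paper's: a triple induction on dimension $m$, radius $R$, and the broad parameter $A$, with the transverse sub-case forcing $\gamma_m(\alpha)\geq \frac14-\frac{d}{4m}$ and the tangent sub-case forcing the recursion $\gamma_m(\alpha)\geq \frac{1}{2m}-\frac{d-\alpha}{2m^2}+\frac{m-1}{m}\gamma_{m-1}(\alpha)$ via interpolation of the inductive $BL^{p_{m-1}}$ bound with Corollary \ref{cor:linL2}; your telescoping does reproduce the second through fourth cases of $\gamma_d(\alpha)$ correctly. However, there is a genuine gap in the first case $\alpha\in(d-1,d]$. The paper does \emph{not} run the linear-interpolation recursion down to $m=2$; it takes $m=3$ as the base case and there uses the \emph{bilinear} weighted estimate of Corollary \ref{cor:bilLp} (bilinear refined Strichartz controls the $2$-broad norm), which gives $\gamma_3(\alpha)=\frac{\alpha}{12}-\frac{d}{6}+\frac13$, strictly better than the linear-interpolation value $\frac{\alpha}{18}-\frac{5d}{36}+\frac13$ by $\frac{d-\alpha}{36}$. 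Propagating your linear-only recursion from $\gamma_2=\frac14-\frac{d}{8}$ yields
\begin{equation*}
\gamma_d(\alpha)=\frac14-\frac{1}{2d}-\frac{(d-\alpha)S^d_3}{2d},
\end{equation*}
which exceeds the theorem's claimed $\frac{(1+2S^d_4)\alpha}{4d}-\frac{1}{2d}-\frac{S^d_4}{2}$ by exactly $\frac{d-\alpha}{12d}>0$ for $\alpha<d$. Your attribution of the extra $\frac{1}{4d}$ in the slope to ``one additional layer of polynomial partitioning at the third dimension'' is not a correct mechanism: partitioning alone cannot improve the slope; the improvement comes from the bilinear ingredient, and the slope you would actually get from the linear route is $\frac{1}{6d}$, not $\frac{1}{4d}$. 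So the first case of the theorem is not established as written.

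A secondary issue: your claim that for $\alpha\in(0,d/2]$ the bound $\gamma_d=0$ follows ``directly from Corollary \ref{cor:linL2} combined with the unweighted Tomas--Stein estimate, bypassing the dimension induction'' does not check out. Tomas--Stein plus H\"older against $\int_{B_R}H\lesssim R^\alpha$ gives only $R^{\alpha(d-1)/(2d(d+1))}$ (as the paper notes in the remark after Theorem \ref{AvrDec-wtRes}), and Corollary \ref{cor:linL2} is an $L^2(Hdx)$ bound that cannot be interpolated against an unweighted Lebesgue estimate to produce an $L^{p_d}(Hdx)$ bound with exponent $0$. This case is harmless because your general induction covers it: for $\alpha\leq d-\frac{m}{2}$ the transverse constraint dominates the tangent recursion at level $m$, so $\gamma_m=\frac14-\frac{d}{4m}$ persists all the way to $m=d$, giving $\gamma_d=0$. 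But the shortcut as stated should be deleted, and the first-case gap requires importing the bilinear refined Strichartz input at $m=3$.
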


To prove Theorem \ref{AvrDec-wtRes}, recall that according to Lemma \ref{wtnr}, an estimate for the broad part implies the same estimate for the regular $L^p$ norm as long as condition $$\gamma_d(\alpha)\geq\frac{1-d}{2}+\frac{\alpha+1}{p_d}$$
is satisfied. It is straightforward to check that this is indeed the case when $$\alpha\leq\#_d:=\frac{2d(d-2-S^d_4)}{2d-3-2S^d_4}.$$ When $\#_d<\alpha\leq d$, in order for the narrow part to be controlled, the best bound one can get from the broad estimate above is
\[
\|Ef\|_{L^{p_d}(B_R;Hdx)}\leq C(\epsilon)R^{\epsilon+\frac{1-d}{2}+\frac{\alpha+1}{p_d}}\|f\|_{L^2},
\]which is exactly the desired estimate for $\alpha\in (\#_d,d]$ in Theorem \ref{AvrDec-wtRes}. 

We also point out that when $\alpha\in (\frac{d}{2},\frac{d+1}{2}]$, the estimate in Theorem \ref{AvrDec2br} coincides with Theorem \ref{wt2br}.

It remain to prove Theorem \ref{AvrDec2br}. The proof follows rom the same strategy as Theorem \ref{wt2br}, where the main tools are polynomial partitioning and induction on scales and dimensions. To make all inductions work, we formulate the following main inductive proposition in a more general setting:

\begin{proposition}\label{prop:AvrDec1}
Given $d\geq 4$, $\alpha\in (0,d]$. For all $\epsilon>0$, there exist a large constant $\bar{A}>1$ and small constants $0<\delta\ll \delta_{d-1}\ll\ldots\ll \delta_1\ll\epsilon$ so that the following holds. Let $m$ be a dimension in the range $3\leq m\leq d$, and $p_m:=\frac{2m}{m-1}$. Suppose that $Z=Z(P_1,\ldots, P_{d-m})$ is a transverse complete intersection with ${\rm Deg}P_i\leq D_Z$, and that $f\in L^2(B^{d-1})$ is concentrated in wave packets from $\ZT_Z(R^{\delta_m})$. Then for any $1\leq A\leq\bar{A}$, $R\geq 1$ and $H\in\mathcal{F}_{\alpha,d}$,
\begin{equation} \label{eq:AvrDec1}
\|Ef\|_{BL_A^{p_m}(B_R;Hdx)}\leq C(K,\epsilon,m,D_Z) R^{m\e}R^{\delta(\log\bar{A}-\log A)} R^{\ga_m}\|f\|_{L^2},
\end{equation}where
\[
\gamma_m(\alpha):=\begin{cases} \frac{\alpha}{12}-\frac{d}{6}+\frac{1}{3}, & m=3,\\
\frac{(1+2S_4^m)\alpha}{4m}+\frac{m-1}{2m}-\frac{(1+S_4^m)d}{2m}, & 4\leq m\leq d,
\end{cases}\quad \text{ if }\alpha\in (d-1,d];
\]
\[
\gamma_m(\alpha):=\begin{cases} -\frac{d}{12}+\frac{1}{4}, & m=3,\\
\frac{S_4^m\alpha}{2m}+\frac{2m-3}{4m}-\frac{(1+2S_4^m)d}{4m}, & 4\leq m\leq d,
\end{cases}\quad \text{ if }\alpha\in (d-2,d-1];
\]
\[
\begin{split}
\gamma_m(\alpha):=\begin{cases} -\frac{d}{4m}+\frac{1}{4}, & 3\leq m\leq \ell-1,\\
\frac{S_\ell^m \alpha}{2 m}+\frac{2m-\ell+1}{4m}-\frac{(1+2S_\ell^m)d}{4 m}, & \ell\leq m\leq d,
\end{cases}&\\
\text{ if }\alpha\in (d-\frac{\ell}{2},d-\frac{\ell}{2}+\frac{1}{2}],\,\forall 5\leq\ell\leq d;&
\end{split}
\]
\[
\gamma_m(\alpha):=-\frac{d}{4m}+\frac{1}{4},\quad 3\leq m\leq d,\quad \text{ if }\alpha\in (0,\frac{d}{2}].
\]
\end{proposition}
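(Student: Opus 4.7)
The plan is to follow the trilayer induction (on dimension $m$ of the variety $Z$, radius $R$, and broadness parameter $A$) used in Proposition~\ref{prop:wtRes-d}, but to track $\gamma_m(\alpha)$ as a piecewise-linear function of $\alpha$ through every step of the recursion. The degenerate cases $R \ll 1$ and $A=1$ are disposed of trivially (by adjusting the implicit constant and by the $L^1 \to L^\infty$ estimate for $E$ against $\bar A$ large, respectively). The substantive base case is $m=3$, which I propose to establish by running the argument of Section~\ref{sec:wtRes-3} with the ambient dimension $d$ rather than $d=3$: polynomial partitioning on the three-dimensional variety $Z$, coupled with the bilinear refined Strichartz from Corollary~\ref{cor:bilLp} at $m=2$, gives the bilinear contribution $R^{(\alpha - 2d + 4)/12} = R^{\alpha/12 - d/6 + 1/3}$, while the $A$-induction together with the rescaling trick forces $\gamma_3 \geq -\tfrac{d}{12} + \tfrac{1}{4}$. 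The piecewise $\gamma_3(\alpha)$ in the proposition is precisely the maximum of these two quantities.

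For the inductive step $m \geq 4$, I would run the algebraic / cellular dichotomy for the broad measure $\mu_{Ef}$ exactly as in Section~\ref{sec:wtRes-d}. In the cellular case, polynomial partitioning inside an $m$-plane $V$ well-approximating the tangent spaces of $Z$ on a dominant piece, together with the orthogonality $\sum_i \|f_i\|_2^2 \lesssim D \|f\|_2^2$ over $\sim D^m$ cells, closes the $R$-induction for any $p > p_m$; the endpoint $p_m$ is recovered by H\"older and letting $p \to p_m^+$. In the algebraic case, one picks the $(m-1)$-dimensional variety $Y$ produced by the dichotomy, passes to balls $B_j$ of radius $\rho$ satisfying $\rho^{1/2 + \delta_{m-1}} = R^{1/2 + \delta_m}$, and decomposes $f_j = f_{j,\mathrm{tang}} + f_{j,\mathrm{trans}}$ with respect to $Y$ inside $B_j$. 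The transverse piece is handled by the $R$-induction, with the orthogonality and equidistribution estimates
\[
\sum_{j,b} \|f_{j,\mathrm{trans},b}\|_2^2 \lesssim \|f\|_2^2, \qquad \max_b \|f_{j,\mathrm{trans},b}\|_2^2 \leq R^{O(\delta_m)} (R/\rho)^{-(d-m)/2} \|f_{j,\mathrm{trans}}\|_2^2,
\]
forcing the transverse constraint $\gamma_m(\alpha) \geq -\tfrac{d}{4m} + \tfrac{1}{4}$.

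For the tangent piece I would redo the wave packet decomposition at scale $\rho$, so that $f_{j,\mathrm{tang}}$ lies in $\ZT_{Y}(\rho^{\delta_{m-1}})$, and then combine two inputs: the $m$-induction hypothesis in dimension $m-1$, which gives $\|E f_{j,\mathrm{tang}}\|_{BL^{p_{m-1}}_{A/2}(B_j; H\,dx)} \lessapprox \rho^{\gamma_{m-1}(\alpha)} \|f_{j,\mathrm{tang}}\|_2$, and the linear weighted $L^2$ bound of Corollary~\ref{cor:linL2} at variety-dimension $m-1$, giving $\|E f_{j,\mathrm{tang}}\|_{L^2(B_j; H\,dx)} \lessapprox \rho^{1/2 - (d-\alpha)/(2m)} \|f_{j,\mathrm{tang}}\|_2$. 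Logarithmic interpolation at $\theta = 1/m$ lands on $BL^{p_m}$ and produces the tangent constraint
\[
\gamma_m(\alpha) \;\geq\; \tfrac{1}{m} \bigl(\tfrac{1}{2} - \tfrac{d-\alpha}{2m} \bigr) + \bigl(1 - \tfrac{1}{m}\bigr) \gamma_{m-1}(\alpha).
\]
The loss $R^{O(\delta_{m-1})}$ from summing over the $B_j$ and the $A/2$ broadness loss from the tangent/transverse split are absorbed by the hierarchy $\delta \ll \delta_{d-1} \ll \cdots \ll \delta_3 \ll \epsilon$ and by the gain $R^{\delta(\log \bar A - \log A)}$, exactly as in Section~\ref{sec:wtRes-d}.

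The main obstacle is then arithmetic bookkeeping: for each of the five regimes of $\alpha$ one must verify that the claimed piecewise formula simultaneously satisfies the transverse bound $-\tfrac{d}{4m} + \tfrac{1}{4}$ and the tangent recursion built from $\gamma_{m-1}$ in the same regime, and that the $m=3$ values agree with the base case. Unrolling the recursion explains the harmonic tails $S_\ell^d = \sum_{i=\ell}^d 1/i$ appearing in the statement: the tangent constraint is the binding one once $m$ crosses a threshold $\ell$ determined by $\alpha$, and below that threshold the transverse constraint dominates, making $\gamma_m(\alpha)$ locally constant in $\alpha$; the breakpoints $\alpha = d - \ell/2 + 1/2$ are precisely the values at which the tangent and transverse constraints coincide at some $m = \ell$, producing the advertised case split.
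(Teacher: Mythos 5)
Your proposal matches the paper's proof essentially step for step: the same trilayer induction on $(m,R,A)$, the base case $m=3$ handled by the transverse constraint together with the bilinear refined Strichartz bound $R^{\alpha/12-d/6+1/3}$ from Corollary~\ref{cor:bilLp} at $m=2$, and the inductive step producing exactly the paper's recursion $\gamma_m \geq \max\{-\tfrac{d}{4m}+\tfrac14,\ \tfrac1m(\tfrac12-\tfrac{d-\alpha}{2m})+(1-\tfrac1m)\gamma_{m-1}\}$ via interpolation at $\theta=1/m$ between the $(m-1)$-dimensional hypothesis and Corollary~\ref{cor:linL2}. The one slip is attributing the $m=3$ bound $-\tfrac{d}{12}+\tfrac14$ to the $A$-induction and rescaling: it is in fact the $m=3$ instance of the transverse constraint you correctly derive in the inductive step (parabolic rescaling only enters later, in Lemma~\ref{wtnr}, when passing from the broad to the linear estimate), but this misattribution does not affect the recursion or the resulting formulas.
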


Theorem \ref{AvrDec2br} follows from Proposition \ref{prop:AvrDec1} by taking $m=d$, $Z=\mathbb{R}^d$ and $A=\bar{A}$. And Proposition \ref{prop:AvrDec1} coincides with Proposition \ref{prop:wtRes-d} when $\alpha\in (\frac{d}{2},\frac{d+1}{2}]$.

The proof of Proposition \ref{prop:AvrDec1} proceeds very similarly as Proposition \ref{prop:wtRes-d}. To begin with, assume $m=3$. To validate the inductive argument for the non-algebraic case, the exponent $p_3=3$ is the smallest possible one we can work with. The transverse case gives a constraint \eqref{eq:trans}:
$$
\ga_3(\al)\geq -\frac{d}{12}+\frac{1}{4}.
$$
As for the tangent sub-case, recall that by interpolating with an $L^2$ estimate which is based on linear refined Strichartz, we have an estimate with essential exponent 
$$
\ga_{3,2}=\frac{\al}{18}-\frac{5d}{36}+\frac{1}{3}\,.
$$
On the other hand, by the bilinear weighted $L^3$ estimate in Corollary \ref{cor:bilLp} (it follows from a randomization argument that $k$-linear estimate is stronger than $k$-broad estimate, cf. \cite{GHI}), we have another estimate for the tangent term with essential exponent
$$
\ga'_{3,2}= \frac{\al}{12}-\frac{d}{6}+\frac{1}{3}\,.
$$
In summary, we have the estimate \eqref{eq:AvrDec1} when $m=3$ with
$$
\ga_3(\al)=\max\left\{-\frac{d}{12}+\frac{1}{4}, \min\big\{\frac{\al}{18}-\frac{5d}{36}+\frac{1}{3}, \frac{\al}{12}-\frac{d}{6}+\frac{1}{3}
\big\}\right\}\,.
$$
Note that 
$$\frac{\al}{18}-\frac{5d}{36}+\frac{1}{3}\geq  \frac{\al}{12}-\frac{d}{6}+\frac{1}{3}$$for all $\al\leq d$, meaning that the bilinear refined Strichartz works better than the linear refined Strichartz in this case. And 
$$
-\frac{d}{12}+\frac{1}{4}\geq \frac{\al}{12}-\frac{d}{6}+\frac{1}{3}
$$
for $\al\leq d-1$. This completes the proof for the base case $m=3$. 

Now, fix $4\leq m\leq d$ and assume that the desired estimates hold true if one decreases $m$, $R$, or $A$.
From the same argument as in the previous section, we have the desired estimate \eqref{eq:AvrDec1} with
$$
\ga_m(\al)=\max\left\{-\frac{d}{4m}+\frac{1}{4}\,,\,(\frac{1}{2}-\frac{d-\alpha}{2m})\cdot\frac{1}{m}+\gamma_{m-1}(\al)\cdot(1-\frac{1}{m})\right\}\,,
$$
where the second exponent in the above is a consequence of interpolation with the $L^2$ estimate in Corollary \ref{cor:linL2} implied by the linear refined Strichartz estimate, $\forall\alpha\in (0,d]$. Note that even though for certain $m$, bilinear refined Strichartz would provide a better bound (i.e. a smaller exponent) for the tangent contribution, it would not translate into a better $\gamma_m(\alpha)$ due to the constraint from the transverse contribution (i.e. the first exponent in the above). 

It remains to check that one can indeed take $\ga_m(\al)$ as stated in Proposition \ref{prop:AvrDec1}, which follows from straightforward computation and is left to the reader.

\subsection{Comparison of tools}\label{subsec:compare}
There are various tools that have been used in the argument above and in Section \ref{sec:wtRes-3}, \ref{sec:wtRes-d}, such as linear and bilinear refined Strichartz estimates, which we would like to discuss a bit more and compare in this subsection.

First, as pointed out in Remark \ref{rmk:falc0}, applying linear refined Strichartz estimate directly, one can immediately obtain some result on Falconer's problem for $d > 4$, which is already better than the previously best known bounds but is not as good as our Theorem \ref{Falc}. This is because the strategy of combining refined Strichartz and polynomial partitioning becomes more and more effective as $\alpha$ decreases from $d$ to $\frac{d}{2}$.

Second, in the proof of Proposition \ref{prop:wtRes-d} and \ref{prop:AvrDec1}, we have studied the tangent sub-case using interpolation between the induction hypothesis from one dimension lower and the weighted $L^2$ estimate in Corollary \ref{cor:linL2} which is based on linear refined Strichartz. Alternatively, one may instead apply directly H\"older's inequality or bilinear weighted estimate in Corollary \ref{cor:bilLp} which is based on bilinear refined Strichartz.

More precisely, Corollary \ref{cor:bilLp} can be applied for each $m$, similarly as in the proof of the base case $m=3$ of Proposition \ref{prop:AvrDec1}, to obtain an estimate for the tangent term. Or, H\"older's inequality implies that
\begin{equation}
\begin{split}
&\|Ef_{j,{\rm tang}}\|_{BL^{p_m}(B_j;Hdx)}\\
\lesssim &\|Ef_{j,{\rm tang}}\|_{BL^{p_{m-1}}(B_j;Hdx)}\left(\int_{N_{R^{1/2+\delta_{m-1}}}(Y)\cap B_j} H\,dx \right)^{\frac{1}{p_m}-\frac{1}{p_{m-1}}},
\end{split}
\end{equation}
which, combined with the fact that $H\in \cf_{\al,d}$ and the induction hypothesis on $(m-1)$-dimensional varieties, produces another estimate for the contribution from tangent wave packets. 

Both estimates already yield improvement of previously best known results for Falconer's problem and the Fourier decay rates of fractal measures, but are weaker than our Theorem \ref{Falc} and Theorem \ref{AvrDec}. Roughly speaking, the method involving H\"older's inequality produces the weakest result among all options, the method via interpolation is the best when $m$ is larger than $d/2$, otherwise the bilinear refined Strichartz approach rules. However, as already mentioned in the proof of Proposition \ref{prop:AvrDec1}, it turns out that it is unnecessary to apply the stronger bilinear refined Strichartz even if $m$ is small, which is because in this case there is too much constraint from the transverse sub-case.

\end{document}